\documentclass[11pt,reqno]{amsart}
\usepackage[margin = 1.3 in]{geometry}
\usepackage{
  hyperref,
  amsmath,
  amssymb,
  amsthm,
  thmtools,
  microtype,
  mathrsfs,
  enumitem,
  stmaryrd,
  diagbox
}
\usepackage[table]{xcolor}
\usepackage{tikz, tikz-cd}

\setlength{\parskip}{.25em}

\usepackage{graphicx}

\linespread{1.1}

\usepackage{eucal}

\theoremstyle{plain}
\newtheorem{theorem}{Theorem}[section]
\newtheorem{proposition}[theorem]{Proposition}
\newtheorem{lemma}[theorem]{Lemma}
\newtheorem{conjecture}[theorem]{Conjecture} 
\newtheorem{corollary}[theorem]{Corollary}
\theoremstyle{definition}
\newtheorem{definition}[theorem]{Definition}

\newtheorem{example}[theorem]{Example}
\newtheorem{exercise}[theorem]{Exercise}

\newtheorem{question}[theorem]{Question}
\theoremstyle{remark}

\numberwithin{equation}{section}

\title[Projection and ramification]{Ramification divisors of general projections}
\author[Deopurkar]{Anand Deopurkar}
\address{Mathematical Sciences Institute\\Australian National University, Acton, ACT, Australia}
\email{anand.deopurkar@anu.edu.au}
\author[Duryev]{Eduard Duryev}
\address{Institute de Math\'ematiques de Jussieu, Paris, France}
\email{edwardduriev@gmail.com}
\author[Patel]{Anand Patel}
\address{Department of Mathematics\\Oklahoma State University, Stillwater, OK, USA}
\email{anand.patel@okstate.edu}

\renewcommand{\k}{\mathbb{K}}
\DeclareMathOperator{\id}{id}
\DeclareMathOperator{\Bl}{Bl}

\DeclareMathOperator{\Hilb}{Hilb}
\DeclareMathOperator{\sing}{Sing}

\DeclareMathOperator{\F}{\mathbf F}

\renewcommand{\to}{{\longrightarrow}}

\ifcsname theorem\endcsname{}\else\declaretheorem[parent=section]{theorem}\fi
\ifcsname corollary\endcsname{}\else\declaretheorem[sibling=theorem]{corollary}\fi
\ifcsname lemma\endcsname{}\else\declaretheorem[sibling=theorem]{lemma}\fi
\ifcsname proposition\endcsname{}\else\declaretheorem[sibling=theorem]{proposition}\fi
\ifcsname conjecture\endcsname{}\else\fi
\ifcsname problem\endcsname{}\else\fi
\ifcsname question\endcsname{}\else\fi
\ifcsname definition\endcsname{}\else\declaretheorem[sibling=theorem, style=definition]{definition}\fi
\ifcsname exercise\endcsname{}\else\fi
\ifcsname example\endcsname{}\else\fi
\ifcsname remark\endcsname{}\declaretheorem[sibling=theorem, style=remark]{remark}\fi

\providecommand {\Z}{{\bf Z}}

\providecommand {\R}{{\bf R}}

\renewcommand {\P}{{\bf P}}
\providecommand {\Gr}{{\bf Gr}}
\providecommand {\A}{{\bf A}}

\providecommand{\GL}{\operatorname{GL}}
\providecommand{\PGL}{\operatorname{PGL}}

\providecommand {\from}{{\colon}}

\providecommand{\spec}{\operatorname{Spec}}

\providecommand{\Hom}{\operatorname{Hom}}

\providecommand{\Aut}{\operatorname{Aut}}

\providecommand{\Pic}{\operatorname{Pic}}
\providecommand{\Sym}{\operatorname{Sym}}
\providecommand{\rk}{\operatorname{rk}} \declaretheorem[sibling=theorem,style=remark]{remark}
\numberwithin{equation}{section}
\declaretheorem[title=Theorem, style=plain]{maintheorem}

\renewcommand{\O}{\mathcal O}

\newcommand{\smvee}{\raise0.5ex\hbox{$\scriptscriptstyle\vee$}}
\DeclareMathOperator{\ord}{ord}

\newcommand{\Proj}{{\text{\rm Proj}\,}}

\begin{document}

\begin{abstract}
  We study the ramification divisors of projections of a smooth projective variety onto a linear subspace of the same dimension.
  We prove that the ramification divisors vary in a maximal dimensional family for a large class of varieties.
  Going further, we study the map that associates to a linear projection its ramification divisor.
  We show that this map is dominant for most (but not all!) varieties of minimal degree, using (linked) limit linear series of higher rank.
  We find the degree of this map in some cases, extending the classical appearance of Catalan numbers in the geometry of rational normal curves, and give a geometric explanation of its fibers in terms of torsion points of naturally occurring elliptic curves in the case of the Veronese surface and the quartic rational surface scroll.
\end{abstract}

\maketitle

\section{Introduction}\label{sec:intro}
Let $f \from X \to Y$ be a map between smooth algebraic varieties.
A fundamental object associated to $f$ is the set $R(f) \subset X$ consisting of critical points of $f$, namely the points $x \in X$ at which $df \from T_xX \to T_{f(x)}Y$ has less than maximal rank.
One might ask: To what extent can $f$ be recovered from $R(f)$?
For example, does every non-trivial perturbation of $f$ induce a non-trivial perturbation of $R(f)$?
If this is the case, then how many other $g \from X \to Y$ have $R(g) = R(f)$?
The goal of this paper is to better understand these questions when $X$ is a smooth projective variety and $Y$ is a projective space of the same dimension as $X$.

More precisely, let $X \subset \P^n$ be a smooth projective variety of dimension $r$, not contained in a hyperplane.
A general $(n-r-1)$-dimensional linear subspace $L \subset \P^n$ defines a finite surjective map $X \to \P^r$.
The set of critical points of this map is the ramification divisor $R(L) \subset X$.
By the Riemann--Hurwitz formula, $R(L)$ lies in the linear series $|K_X + (r+1)H|$, where $K_X$ is the canonical class, and $H$ is the hyperplane class on $X$.
The association $L \leadsto R(L)$ gives a rational map
\[
  \rho \from \Gr(n-r, n+1) \dashrightarrow |K_X + (r+1)H|.
\]
In terms of $\rho$, we can formulate questions more precisely.  Knowing the behavior of the ramification locus under a perturbation is equivalent to knowing whether the map $\rho$ is generically finite, or equivalently, whether the image of $\rho$ has the maximal possible dimension.
In the literature, this question is known as the question of maximal variation of the ramification locus.
Knowing the number of maps with the same ramification locus is knowing the degree of $\rho$.
Our main goal is to address these questions.

\subsection{Maximal variation}
Although very classical in nature, the question of maximal variation of $\rho$ first appeared explicitly in the work of Flenner and Manaresi \cite{fle.man:98} in connection with the transcendence degree of the St\"uckrad-Vogel cycle in intersection theory. 
They established maximal variation under a geometric condition we call ``incompressibility.''
We prove the following more general theorem.
\begin{maintheorem}\label{thm:main}
  Let $X \subset \P^n$ be a non-degenerate, normal, projective variety over a field of characteristic zero.
  Suppose at least one of the following holds:
  \begin{enumerate}
  \item\label{item:incomp}(incompressibility) for every linear subspace $L \subset \P^n$ of dimension $(n-r-1)$, projection from $L$ restricts to a dominant rational map $X \dashrightarrow \P^r$;
  \item\label{item:dual}(divisorial dual) the dual variety $X^* \subset {\P^n}^*$ is a hypersurface.
  \end{enumerate}
  Then $\rho$ is generically finite onto its image.
\end{maintheorem}
In the main text, \autoref{thm:main} is \autoref{cor:maintheorem}.

Recall that the dual variety $X^* \subset {\P^n}^*$ is the closure of the locus of hyperplanes $H \subset \P^n$ such that the intersection of $H$ with the smooth locus of $X$ is singular.
We call $X \subset \P^n$ satisfying \eqref{item:incomp} \emph{incompressible} as it cannot be projected down (compressed) to a smaller dimensional subvariety by a linear projection.
The maximal variation result of \cite{fle.man:98} obtained the conclusion of \autoref{thm:main} assuming incompressibility.

\autoref{thm:main} substantially increases the class of varieties where we now know maximal variation.
Indeed, it is easy to see that if $X$ is any smooth surface over a field of characteristic zero, then the dual variety $X^*$ is a hypersurface.
Therefore, maximal variation holds for all surfaces.
Note, in contrast, that not all surfaces are incompressible.
The first counterexample is the cubic surface scroll $X \subset \P^4$---the projection from the directrix $L \subset X$ projects $X$ onto a $\P^1$.
Thus, even for surfaces, condition \eqref{item:dual} of \autoref{thm:main} covers new ground.
In general, let $X$ be of arbitrary dimension embedded in $\P^n$ by a sufficiently positive line bundle (for example, by a sufficiently high Veronese re-embedding).
Then $X \subset \P^n$ is usually not incompressible, but the dual variety $X^*$ will be a hypersurface.
As a result, $X \subset \P^n$ is covered by condition \eqref{item:dual} of \autoref{thm:main}.

The hypotheses in \autoref{thm:main} are sufficient, but not necessary.
Indeed, consider $X = \P^{r-1} \times \P^1 \subset \P^{2r-1}$, embedded by the Segre embedding, for $r \geq 3$.
Then $X$ is neither incompressible nor is $X^*$ a hypersurface, and yet $\rho$ is dominant (see \autoref{thm:examples}).

Given that maximal variation holds for a large class of varieties, it is natural to wonder if it always holds.
This is not the case.
\begin{maintheorem}
  \label{thm:counterexamples}
  There exist smooth, non-degenerate, rational normal scrolls $X \subset \P^{n}$ of every dimension $r \geq 4$ and degree $d \geq r+1$ for which the projection-ramification map $\rho$ is not generically finite onto its image.
\end{maintheorem}
In the main text, \autoref{thm:counterexamples} is \autoref{cor:actualcounterexamples}.

The possible existence of varieties for which the projection-ramification map is not generically finite has been alluded to by Zak \cite{zak:}, and our examples of rational normal scrolls in \autoref{thm:counterexamples} are the first known instances.
We describe these rational normal scrolls  explicitly; it is worth mentioning that they include scrolls of general moduli.

Having considered the question of maximal variation in general, we turn our attention to cases where the map $\rho$ has a chance of being dominant.
Our next result classifies such $X \subset \P^n$.
\begin{maintheorem}\label{thm:minimaldegree}
  Let $X \subset \P^{n}$ be a smooth, non-degenerate projective variety of dimension $r$ over a field of characteristic zero.
  We have the inequality
  \[ \dim \Gr(n-r, n+1) \leq \dim |K_X + (r+1)H|.\]
  Equality holds if and only if $X$ is a variety of minimal degree, that is $\deg X = n-r+1$.
\end{maintheorem}
In the main text, \autoref{thm:minimaldegree} is \autoref{thm:actualminimaldegree}.

Recall the list of smooth varieties of minimal degree: quadric hypersurfaces, the Veronese surface in $\P^5$, and rational normal scrolls.
By \autoref{thm:main}, $\rho$ is dominant for hypersurfaces and surfaces, so what remains are the scrolls.
Among the scrolls, the curves (rational normal curves) and surfaces are again covered by \autoref{thm:main}.
For threefold scrolls, we show by an explicit calculation and a degeneration argument that $\rho$ is dominant (\autoref{cor:maxvariation3scrolls}).
In higher dimensions, the story is complicated, as evidenced by \autoref{thm:counterexamples}.
Nevertheless, we prove the following.
\begin{maintheorem}
  \label{thm:rationalnormalscrolls}
  Let $X = \P E \subset \P^n$ be a rational normal scroll, where $E$ is a ample vector bundle of rank $r$ on $\P^1$, general in its moduli.
  If $\deg E = a \cdot (r-1) + b \cdot (2r-1) + 1$ for non-negative integers $a, b$, then the projection-ramification map $\rho$ is dominant for $X$.
  In particular, the conclusion holds if $E$ is general of degree at least $(r-1)(2r-1) + 1$.
\end{maintheorem}
In the main text, \autoref{thm:rationalnormalscrolls} is \autoref{thm:actualrationalnormalscrolls}.

The proof of \autoref{thm:rationalnormalscrolls} goes by degeneration.
We degenerate $X$ to a reducible variety $X_0$, namely the projectivization of a vector bundle on a two-component nodal rational curve.
Suppose we could define a projection-ramification map for $X_0$ and show that it is dominant, then the same holds $X$, by the upper semi-continuity of fiber dimensions.
Although promising, this line of attack fails with the most na\"ive definition of the projection-ramification map.
The right definition requires more sophisticated tools, specifically, the spaces of (linked) limit linear series for vector bundles of higher rank developed by Teixidor i Bigas \cite{tei-i-big:91} and Osserman \cite{oss:14}.

\subsection{Enumerative problems}
\autoref{thm:minimaldegree} and \autoref{thm:rationalnormalscrolls} motivate a natural set of enumerative questions: for $X \subset \P^n$ of minimal degree, what is the degree of the projection-ramification map $\rho$?
We make the convention that if a map is not dominant, then its degree is $0$.

For $X$ of dimension 1, namely a rational normal curve, the answer is easy to find---the degree of $\rho$ is the Catalan number $\frac{(2n-2)!}{n!(n-1)!}$.
Indeed, in this case, the projection-ramification map
\[ \rho \from \Gr(2, n+1) \to \P^{2n-2}\]
is regular, and the pullback of $\O(1)$ is the Pl\"ucker line bundle.
Therefore, the degree of $\rho$ is the top self-intersection of the Pl\"ucker bundle.
Schubert calculus gives that this is the Catalan number.

For $X$ of codimension $1$, namely a quadric hypersurface, the projection-ramification map
\[ \rho \from \Gr(n, n+1) = \P^{n} \to {\P^n}^* \]
is again regular, and is in fact the duality isomorphism induced by the (non-degenerate) quadric $X$.
In particular, it has degree $1$.

The cases of the Veronese surface $X \cong \P^2 \subset \P^5$ and the quartic surface scroll $X = \P(\O(2) \oplus \O(2)) \subset \P^5$ are particularly delightful.
In these cases, the fibers of $\rho$ have an interpretation in terms of 2-torsion points of certain elliptic curves, which we now describe.
For the Veronese surface, the target of $\rho$ is the linear series of cubics in $\P^2$.
The points of fiber of $\rho$ over a cubic $R \subset \P^2$ correspond naturally to the non-trivial 2-torsion points of $\Pic R$.
In particular, the degree of $\rho$ is $3$.
For the quartic surface scroll, the target of $\rho$ modulo the action of $\Aut X$ is birational to the moduli space of $(R, \eta)$ where $R$ is a plane cubic and $\eta$ is a non-trivial 2-torsion point of $\Pic R$.
The points of the fiber of $\rho$ over $(R, \eta)$ correspond naturally to the two elements of $\Pic E[2] \setminus \pi^* \Pic R [2]$, where $E \to R$ is the \'etale double cover defined by $\eta$.
In particular, the degree of $\rho$ is $2$.
In this case, the source of $\rho$ (the Grassmannian $\Gr(3, H^0(X, \O(1)))$ modulo the action of $\Aut X$) has several known moduli interpretations.
It is birational to the moduli of unordered triplets of unordered pairs of points on $\P^1$, namely $M_{0,6}/(S_2 \times S_2 \times S_2 \rtimes S_3)$.
This space, in turn, is isomorphic to the moduli of hyperelliptic curves with a maximal isotropic subspace of the $\F_2$-vector space of $2$-torsion points, or equivalently, to the moduli of principally polarized abelian surfaces with a maximal isotropic subspace of the $\F_2$-vector space of $2$-torsion points \cite[Example~4.2]{dol.how:15}.
The involution on this space induced by the $2$-to-$1$ map $\rho$ coincides with the classical Richelot or Fricke involution \cite[Remark~4.3]{dol.how:15}.
See \autoref{rem:richelot} for more details.

The following result summarizes our knowledge of the degree of $\rho$.
\begin{maintheorem}\label{thm:examples}
  Let $\rho$ be the projection-ramification map for $X \subset \P^n$ of minimal degree.
\begin{enumerate}
  \item If $X \subset \P^{n}$ is a rational normal curve, then $\rho$ is regular and $\deg \rho = \frac{(2n-2)!}{n!(n-1)!}$.
  \item  If $X \subset \P^{n}$ is a quadric hypersurface, then $\rho$ is an isomorphism; in particular, $\deg \rho = 1$.

  \item  If  $X = \P^{r-1} \times \P^{1} \hookrightarrow \P^{2r-1}$ is the Segre embedding, then $\deg \rho = 1$.

  \item  If $X \subset \P^{5}$ is the Veronese surface, then $ \deg \rho = 3$.
  \item If $X \subset \P^{5}$ is a general quartic surface scroll, then $\deg \rho = 2$.
  \item If $X = \P(\O_{\P^{1}}(1) \oplus \O_{\P^{1}}(k+1)) \subset \P^{k+3}$ is the surface scroll with the most imbalanced splitting type, then $\deg \rho = 1$.
  \item If $X = \P(\O_{\P^{1}}(1) \oplus \O_{\P^{1}}(1) \oplus \O_{\P^{1}}(k+1)) \subset \P^{k+5}$ is the threefold scroll with the most imbalanced splitting type, then $\deg \rho = 1$.
\end{enumerate} 
\end{maintheorem}
In the main text, the items in \autoref{thm:examples} are treated in \autoref{sec:arnc}, \autoref{sec:aquadricsurface}, \autoref{prop:segre}, \autoref{prop:veronese}, \autoref{prop:quarticscroll}, \autoref{prop:eccentric_surface}, and \autoref{prop:eccentric_threefold}, respectively.

\subsection{Further remarks}
There are two natural enumerative problems regarding finite coverings of curves.
The first problem, originating in the work of Hurwitz, is to compute the number of branched covers $C \to \P^1$ with a specified set of branch points $B \subset \P^1$.
These  Hurwitz numbers are difficult to compute, but they exhibit remarkable structure \cite{eke.lan.sha.ea:99,eke.lan.sha.ea:01}.
The second problem is to compute the number of maps $C \to \P^1$ with a prescribed set of ramification points $R \subset C$.  In fact, it is easy to see that this problem is only meaningful when $C = \P^{1}$, in which case it is immediately and easily answered by Schubert calculus, yielding the Catalan numbers.

In higher dimensions, however, the analogue of the Hurwitz problem is expected to be much less interesting, as evidenced by Chisini's conjecture (proved by Kulikov \cite{kul:08}).
A branched cover $S \to \P^2$ with generic branching is uniquely determined by its branch divisor $B \subset \P^2$, with finitely many well-understood counterexamples.
In contrast, as hinted by \autoref{thm:minimaldegree}, the enumerative problem regarding the ramification divisor persists, and poses a significant challenge.
In some sense, the enumerative problems regarding the branch and ramification divisors trade places, certainly in terms of difficulty, but hopefully also in terms of structure.

\subsection{Further questions}
Our work raises several questions, some of which we hope to return to in the future.
\subsubsection{The enumerative problem for scrolls}\label{sec:qscroll}
Recall that every vector bundle on $\P^1$ is isomorphic to a direct sum of line bundles.
In particular, an ample vector bundle of rank $r$ and degree $d$ is isomorphic to $\O(a_1) \oplus \cdots \oplus \O(a_r)$ for positive integers $a_1, \dots, a_r$ satisfying $a_1 \leq \cdots \leq a_r$ and $a_1 + \cdots + a_r = d$.
It is thus specified up to isomorphism by an $r$-term partition of $d$.
Let $\Sigma_{r,d}$ be the set of $r$-term partitions of $d$.
We get a function $\phi \from \Sigma_{r,d} \to \Z_{\geq 0}$ defined by
\[ \phi(a_1, \dots, a_r) = \text{Degree of the projection-ramification map for $X \subset \P^{r+d}$},\]
where $X = \P\left( \O(a_1) \oplus \dots \oplus \O(a_r) \right)$ is embedded in $\P^{r+d}$ by $\O_X(1)$.
The set $\Sigma_{r,d}$ is partially ordered by the dominance order $\prec$.
In terms of vector bundles, $\prec$ translates into isotrivial specialization: $(a_1, \dots, a_r) \prec (b_1, \dots, b_r)$ if and only if $\O(b_1) \oplus \dots \oplus \O(b_r)$ isotrivially specializes to $\O(a_1) \oplus \dots \oplus \O(a_r)$.
In this case, by the lower semi-continuity of degrees of rational maps, we get
\[ \phi(a_1, \dots, a_r) \leq \phi(b_1,\dots, b_r).\]
Thus, $\phi$ is order preserving.

We hope that the enumerative function $\phi \from \Sigma_{r,d} \to \Z$ admits a deeper structure, such as a recurrence relation or generating function.
\autoref{thm:minimaldegree} and \autoref{thm:examples} only scratch the surface as far as $\phi$ is concerned.
\autoref{thm:minimaldegree} states that $\phi$ is not identically zero, at least if $d$ is sufficiently large.
\autoref{thm:examples} computes $\phi$ for the partitions $(d)$, $(1, \dots, 1)$, $(1, k+1)$, $(1,1,k+1)$, and $(2,2)$.
Some more examples, calculated using randomized trials over finite fields in \texttt{Macaulay2} \cite{gra.sti:} and \texttt{Magma} \cite{bos.can.pla:97}, are tabulated in \autoref{tab:computation}.
There seems to be some enchanting combinatorics behind $\phi$. As a sample, we point out that the sequence of numbers $\phi(n)$ are the Catalan numbers, and the sequence $\phi(\lfloor n/2 \rfloor, \lceil n/2 \rceil)$ (appearing down the diagonal in \autoref{tab:computation}: $1, 2, 6, 22, 92, 422, \dots$) seems to be \cite[A001181]{oeis:}, namely the number of Baxter permutations on $n-2$ letters.
We plan on conducting a more complete enumerative investigation of $\phi$ in a future paper. 
\begin{table}
  \centering
  \rowcolors{2}{gray!10}{white}
  \caption{Degrees of the projection-ramification maps for $X = \P(\O(a_1) \oplus \O(a_2))$} \label{tab:computation}
  \begin{tabular}{l| r r r r}
    \rowcolor{gray!25}
    \diagbox{$a_1$}{$a_2$} & 1 & 2 & 3 & 4\\
    \hline
    1 & 1 & & &\\
    2 & 1 & 2 & &\\
    3 & 1 & 6 & 22 &\\
    4 & 1 & 17 & 92 & 422\\
  \end{tabular}
\end{table}

\subsubsection{Non-maximal variation} Given that our counterexamples to maximal variation are all scrolls over curves, it is natural to wonder whether failure of maximal variation can only occur for scrolls.  Ideally, we would seek a clean classification of varieties failing maximal variation -- a good start would be to precisely classify all  rational normal scrolls which fail maximal variation.

It would especially be nice to establish maximal variation for all threefolds -- by \autoref{thm:main} along with the well-known fact that the only threefolds which have degenerate duals are scrolls, we need only establish maximal variation for scrolls, i.e. $\P^{2}$-bundles over curves.

\subsubsection{Compressible varieties}  Another problem which naturally emerges from our work is to classify those varieties which are compressible.  For example, it is easy to see that smooth, non-degenerate complete intersection varieties are automatically incompressible.  Therefore, it might be possible to prove incompressibility for varieties of small codimension in large projective spaces.  It may also be possible to classify the compressible varieties having a fixed small codimension, starting with codimension two.

\subsubsection{Positive characteristics}
The analysis of maximal variation and the computation of the degree of $\rho$  will surely bring new surprises and require new techniques in positive characteristics.
We do not know if \autoref{thm:main} or \autoref{thm:minimaldegree} holds in positive characteristic; our proofs certainly do not work.
The degrees in \autoref{thm:examples}, and likewise the values of the enumerative function $\phi \from \Sigma_{r,d} \to \Z$ defined in \autoref{sec:qscroll}, depend on the characteristic due to the presence of inseparable covers.
Indeed, this is true even for rational normal curves \cite{oss:06}.

\subsubsection{Picture over the real numbers}
Consider the projection-ramification map of a rational normal curve of degree $n$, also called the Wronskian map,
\[ \rho \from \Gr(2, n+1) \to \P^{2n-2}.\]
The real algebraic geometry surrounding $\rho$ plays an important role in real enumerative geometry, the theory of real algebraic curves, and control theory, thanks to the B. and M. Shapiro conjecture.
Proved by Eremenko and Gabrielov, this conjecture states that if $L \in \Gr(2, n+1)$ is such that the ramification divisor $\rho(L)$ is the sum of $(2n-2)$ real points in $\P^1$, then $L$ is a real point of $\Gr(2, n+1)$ \cite{sot:00, ere.gab:02}.
\autoref{thm:minimaldegree} potentially sets the stage for a higher-dimensional generalization of the body of work around the Shapiro conjecture.
In particular, it would be interesting to find a uniform topological picture explaining the numbers $\deg \rho$, similar to the ``nets'' introduced by Eremenko and Gabrielov which elegantly explain the appearance of Catalan numbers.

\subsection{Notation and conventions}
We work over an algebraically closed field $\k$ of characteristic zero, not simply for convenience, but for necessity---we appeal to Bertini's theorem, generic smoothness, and Kodaira vanishing.
All schemes are of finite type over $\k$.
A variety is a separated integral scheme.
For a scheme $X$, we let $X^{\rm sm} \subset X$ be the smooth locus.

We go back and forth without comment between divisors and line bundles, and likewise, between locally free sheaves and vector bundles.
We follow Grothendieck's convention for projectivization.
That is, the projectivization $\P E$ of a vector bundle $E$ is the space of one dimensional quotients of $E$.
For a line bundle $L$ on $X$, we denote by $|L|$ the projective space $\P H^0(X, L)^*$.

Given a vector bundle $F$ on $X$, we denote by $P(F)$ the sheaf of principal parts of $F$.
This is defined by the formula
\[ P(F) = {\pi_2}_* \left( {\pi_1}^* F \otimes \mathcal \O_{X \times X}/ I_{\Delta}^2 \right),\]
where the $\pi_i$ are the projections on the two factors and $\Delta \subset X \times X$ is the diagonal.
We remind the reader of the exact sequence
\[ 0 \to F \otimes \Omega_S \to P(F) \to F \to 0\]
and the evaluation map
\[ e \from H^0(X, F) \otimes \O_X \to P(F).\]

\subsection{Organization}
In \autoref{sec:prmap}, we give basic definitions, culminating in the precise general definition of $\rho$ (\autoref{def:ProjectionRamification}).
The subsequence sections are logically independent of each other and can be read in any order after \autoref{sec:prmap}.

In \autoref{sec:proof_of_theorem:main}, we prove \autoref{thm:main}\eqref{item:incomp} (\autoref{prop:incompress}).
We then introduce the notion of non-defectivity, which generalizes the condition of having a divisorial dual.
After establishing basic properties of non-defectivity, we prove \autoref{thm:main}\eqref{item:dual} (\autoref{thm:mainMain}).

In \autoref{sec:minimaldegree}, we prove \autoref{thm:minimaldegree} (\autoref{thm:actualminimaldegree}).
In the same section, we derive explicit formulas for the ramification divisors for scrolls in \autoref{sec:prscrolls}, give the examples advertised in \autoref{thm:counterexamples} (\autoref{sec:failure}), and treat the threefold scrolls in \autoref{sec:eccentric_threefolds}.

In \autoref{sec:generic}, the main goal is the proof of \autoref{thm:rationalnormalscrolls} (\autoref{thm:actualrationalnormalscrolls}).
For this, we lay the groundwork by doing some low degree cases by hand (\autoref{sec:lowdegree}).
We then recall the theory of (linked) limit linear series for vector bundles of higher rank in \autoref{sec:lls}, and define the projection-ramification map for linked linear series in \autoref{sec:prnongeneric} and \autoref{sec:prlls}.
By a degeneration argument involving linked linear series, we prove \autoref{thm:rationalnormalscrolls} in \autoref{sec:llsproof}.

In \autoref{sec:enumerativeproblems}, we turn to the enumerative problem of finding the degree of $\rho$, namely the results in \autoref{thm:examples}.
We treat the cases of rational normal curves and quadric hypersurfaces quickly in \autoref{sec:arnc} and \autoref{sec:aquadricsurface}.
We devote \autoref{sec:veronese} to the case of the Veronese surface and \autoref{sec:quartic_scroll} to the case of the quartic surface scroll.

\subsection*{Acknowledgments}
First and foremost, we thank Fyodor Zak for generously sharing numerous useful comments, ideas, and encouragement over the span of several months.
We also thank Izzet Coskun, Joe Harris, Mirella Manaresi, Brian Osserman, and Dennis Tseng for useful conversations. 
A.D. thanks the Australian Research Council for the grant number \texttt{DE180101360} that supported a part of this project.
A.D. and A.P. conducted a part of this research at the Banff International Research Center while attending the workshop titled \emph{Moduli spaces, birational geometry, and wall crossings} organized by Dan Abramovich, Jim Bryan, and Dawei Chen, and are grateful for the opportunity to attend.

\section{The projection-ramification map}\label{sec:prmap}
In this section, we define a projection-ramification map for a linear series on a proper, normal, variety $X$.
For $X \subset \P^n$, taking the linear series cut out by the hyperplanes recovers the projection-ramification map introduced in \autoref{sec:intro}.
Working with abstract linear series, however, offers more flexibility that is helpful in inductive proofs.

Let $X$ be a proper variety of dimension $r$ over an algebraically closed field $k$ of characteristic zero.
A \emph{linear series} on $X$ is a pair $(L, W)$ consisting of a line bundle $L$ on $X$ and a subspace $W \subset H^0(X, L)$.
The \emph{complete linear series} associated to $L$ is $(L, W)$ with $W = H^0(X, L)$.
A \emph{projection} is a linear series $(L, V)$ with $\dim V = r+1$.
A \emph{projection of $(L, W)$} is a projection $(L, V)$ with $V \subset W$.
As a convention, we use $V$ for projections and $W$ for more general linear series.

\begin{definition}[Properly ramified projection]
  \label{def:properlyramified}
We say that a projection $(L,V)$ is \emph{properly ramified} if the evaluation homomorphism
\[e \from V \otimes \O_{X} \to P(L)\]
is an isomorphism over a general point in $X$.  If $(L,V)$ is properly ramified, its \emph{ramification divisor}
\[R(L,V) \subset X\]
is the closure of the scheme defined by the determinant of $e \from V \otimes \O_{X^{\rm sm}} \to P(L)|_{X^{\rm sm}}$.
\end{definition}
In most cases, $L$ is clear from context, so we drop it from the notation and denote the ramification divisor simply by $R(V)$.
\begin{remark}\label{rem:Jacobian}
  Suppose for simplicity that $V$ is a base-point free linear series that yields a surjective map $\phi \from X \to \P V$.
  Then the ramification divisor may be defined as the degeneracy locus of the map
  \[ d \phi \from T_{X} \to \phi^* T_{\P V}\]
  on tangent spaces.
  The degeneracy locus is the zero locus of $\det \phi$, which in local coordinates, is given by the determinant of the Jacobian matrix $\left( \frac{\partial \phi_i}{\partial x_j} \right)$.
  Therefore, the ramification divisor $R(L, V)$ is also often called the \emph{Jacobian} of the linear series $(L, V)$ (see, for example, \cite[1.1.7]{dol:12}).
\end{remark}

A projection $(L, V)$ gives the evaluation map
\[e \from V \otimes \O_X \to L.\]
The evaluation map yields a map $p_{V,L} \from X \dashrightarrow \P V$, regular on the non-empty open set of $X$ where $e$ is surjective.
The following is an easy observation, whose proof we skip.
\begin{proposition}\label{prop:proj}
  The projection $(L, V)$ is properly ramified if and only if the map on tangent spaces induced by $p_{V,L}$ is generically an isomorphism.
  In characteristic zero, this is equivalent to the condition that $p_{V,L}$ is dominant.
\end{proposition}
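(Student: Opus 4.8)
The plan is to unwind both equivalences from the structure of the evaluation map and its associated rational map, using the exact sequence for principal parts. First I would set up local coordinates: over the open locus $U \subset X$ where $e \from V \otimes \O_X \to L$ is surjective, trivialize $L$ and pick generators so that $p_{V,L}$ is given by functions $(\phi_1, \dots, \phi_r)$ in local coordinates $(x_1, \dots, x_r)$, and observe that under the identification $P(L)|_U \cong L \oplus (L \otimes \Omega_X)|_U$ coming from a splitting of $0 \to L \otimes \Omega_X \to P(L) \to L \to 0$ (which exists locally), the map $e \from V \otimes \O_U \to P(L)|_U$ has matrix whose ``$L$-component'' row is the row defining $p_{V,L}$ and whose ``$L \otimes \Omega_X$-component'' encodes the first-order parts, i.e.\ the differentials $d\phi_i$. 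Since $\dim V = r+1 = \rk P(L)$, the map $e$ is generically an isomorphism if and only if the corresponding $(r+1)\times(r+1)$ matrix is generically nonsingular, and a block-triangular computation identifies the vanishing of its determinant (away from the base locus of $V$) with the vanishing of the Jacobian determinant $\det(\partial \phi_i / \partial x_j)$ — this is exactly \autoref{rem:Jacobian}. Thus $e$ is generically an isomorphism iff $d p_{V,L} \from T_X \to p_{V,L}^* T_{\P V}$ has maximal rank at a general point, which is the first claimed equivalence.

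For the second equivalence I would argue as follows. If $d p_{V,L}$ is generically an isomorphism of tangent spaces, then the image of $p_{V,L}$ has dimension $r = \dim \P V$, so $p_{V,L}$ is dominant. Conversely, suppose $p_{V,L}$ is dominant. Since we are in characteristic zero, generic smoothness (Sard's theorem for the dominant rational map $p_{V,L}$ from the smooth locus of $X$) applies: there is a dense open over which $p_{V,L}$ is smooth of relative dimension $\dim X - \dim \P V = 0$, hence étale, so $d p_{V,L}$ is an isomorphism at a general point. This is precisely where characteristic zero is used, as flagged in the paper's conventions.

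Combining the two equivalences gives the proposition: properly ramified $\iff$ $d p_{V,L}$ generically an isomorphism $\iff$ (in characteristic zero) $p_{V,L}$ dominant. I expect the only subtle point to be the bookkeeping in the first step — matching the determinant of $e$ with the Jacobian determinant, in a way that is honestly coordinate-independent rather than merely local. The cleanest route is to note that the composite $V \otimes \O_X \xrightarrow{e} P(L) \to L$ is the usual evaluation map, whose cokernel is supported on the base locus $B$ of $V$; away from $B$ we get an induced map $\ker(V \otimes \O_X \to L) \to L \otimes \Omega_X$, and a diagram chase shows this induced map is, up to the twist by $L$, exactly $d p_{V,L}$ transported along $p_{V,L}^* \Omega_{\P V} \hookrightarrow$ (the relevant sheaf). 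Then $\det e$ vanishes along $B$ together with the degeneracy locus of $d p_{V,L}$, and in particular $e$ is generically an isomorphism iff $d p_{V,L}$ is. Everything else is formal, so I would keep the write-up to a few lines, as the paper itself suggests by calling this "an easy observation.''
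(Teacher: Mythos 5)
Your argument is correct: the identification of $\det e$ with the Jacobian determinant of $p_{V,L}$ (up to a unit and the base locus) via the principal parts sequence is exactly the content of \autoref{rem:Jacobian}, and the passage between generic \'etaleness and dominance via generic smoothness is the standard characteristic-zero step. The paper explicitly skips the proof of this proposition, so there is nothing to compare against; your write-up is the intended argument and is complete.
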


For a fixed $(L, W)$, the set of all projections of $(L, W)$ are parametrized by the Grassmannian $\Gr(r+1, W)$.
The property of being properly ramified is a Zariski open condition on the Grassmannian.

We now define a map that assigns to a projection its ramification divisor.
To do so, we interpret the ramification divisor as an element of a linear series.

Assume, furthermore, that $X$ is normal.
Let $K_X$ be the canonical sheaf of $X$.
Denoting by $i \from X^{\rm sm} \to X$ the inclusion, $K_X$ is given by the push-forward
\[ K_X = i_* K_{X^{\rm sm}}.\]
Note that, since $X$ is normal, the complement of $X^{\rm sm} \subset X$ has codimension at least 2.
The sheaf $K_X$ is coherent, reflexive, and satisfies Serre's S2 condition.

Let $L$ be a line bundle on $X$.
The sheaf $P(L)$ is locally free of rank $(r+1)$ on $X^{\rm sm}$, and we have a canonical isomorphism
\[ \bigwedge^{r+1} P(L) |_{X^{\rm sm}} \cong K_{X^{\rm sm}} \otimes L^{r+1}.\]
Given a subspace $V \subset H^0(X, L)$, we apply $\bigwedge^{r+1}$ to the evaluation map
\[ e \from V \otimes \O_{X^{\rm sm}} \to P(L)|_{X^{\rm sm}},\]
to get
\[ \det e \from \det V \otimes \O_{X^{\rm sm}} \to K_{X^{\rm sm}} \otimes L^{r+1}. \]
By applying $i_*$ and taking global sections, we get
\begin{equation}\label{eqn:ramsection}
  r_V \from \det V \to H^0(X, K_X \otimes L^{r+1}).
\end{equation}
If $(L, V)$ is properly ramified, then this map is non-zero, and hence gives a point of the projective space $\P H^0(X, K_X \otimes L^{r+1})^*$.
Doing the same construction universally over the Grassmannian $\Gr = \Gr(r+1, W)$ yields a map
\begin{equation}\label{eqn:rammap}
  r \from \det \mathcal V \to H^0(X, K_X \otimes L^{r+1}) \otimes \O_{\Gr},
\end{equation}
where $\mathcal V \subset W \otimes \O_{\Gr}$ is the universal sub-bundle of rank $(r+1)$.
Let $U \subset \Gr$ be the open subset of properly ramified projections.
Then the map in \eqref{eqn:rammap} is non-zero at every point of $U$, and defines a map $U \to \P H^0(X, K_X \otimes L^{r+1})^*$ given by the surjection
\begin{equation}\label{eqn:rammapfamily}
  H^0(X, K_X \otimes L^{r+1})^* \otimes \O_{U} \to \det \mathcal V|_U^*.
\end{equation}
Note that $U$ is non-empty if and only if $W$ separates tangent vectors at a general point of $X$.
\begin{definition}[Projection-ramification map]
  \label{def:ProjectionRamification}
  Let $(L, W)$ be a linear series that separates tangent vectors at a general point of $X$.
  The \emph{projection-ramification} map for $(L,W)$ is the rational map
  \[
    \rho_{(X,L,W)} \from \Gr(r+1, W) \dashrightarrow \P H^0(X, K_X \otimes L^{r+1})^*
  \]
  defined on the non-empty open subset of properly ramified maps by \eqref{eqn:rammapfamily}.
\end{definition}
If any of $X$, $L$, or $W$ are clear from context, we drop them from the notation.
In particular, for a non-degenerate $X \subset \P^n$, we denote by $\rho_X$ the map $\rho_{X,L,W}$  with $L = \O_X(1)$ and $W$ the image in $H^0(X, L)$ of $H^0(\P^n, \O(1))$.

Note that the map \eqref{eqn:rammapfamily} factors as
\[ \det \mathcal V \xrightarrow{a} \bigwedge^{r+1} W \otimes \O_{\Gr} \xrightarrow{b} H^0(X, K_X \otimes L^{r+1}) \otimes \O_{\Gr},\]
where $a$ is $\wedge^{r+1}$ applied to the universal inclusion $\mathcal V \subset W \otimes \O_{\Gr}$, and $b$ is induced by $\wedge^{r+1}$ applied to the evaluation map $e \from W \otimes \O_{X} \to P(L)$.
The map $a$ defines the Pl\"ucker embedding
\[ i \from \Gr(r+1, W) \to \P \left(\bigwedge^{r+1}W^*\right),\]
and the map $b$ defines a linear projection
\[ p \from \P \left(\bigwedge^{r+1}W^*\right) \dashrightarrow \P H^0(X, K_X \otimes L^{r+1}).\]
Thus, $\rho_{X,L,W}$ factors as the Pl\"ucker embedding followed by a linear projection.

\section{Maximal variation for incompressible and non-defective $X$}
\label{sec:proof_of_theorem:main}
The goal of this section is to prove \autoref{thm:main}.
We begin by proving part \eqref{item:incomp}, which is substantially easier.
\begin{proposition}[\autoref{thm:main}~\eqref{item:incomp}]
  \label{prop:incompress}
  Let $X \subset \P^n$ be a non-degenerate, normal, incompressible projective variety over a field of characteristic zero.
  Then $\rho_X$ is a finite map.
\end{proposition}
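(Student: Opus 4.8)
The plan is to show that $\rho_X$ has finite fibers by combining two observations: that $\rho_X$ factors as the Plücker embedding followed by a linear projection, and that incompressibility forces this linear projection to be defined everywhere on the Grassmannian, hence a morphism. Since a linear projection which is a morphism on a projective variety is automatically finite onto its image, we get the conclusion.

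First I would recall from the discussion preceding this proposition that $\rho_X$ factors as
\[
  \Gr(r+1, W) \xrightarrow{i} \P\left(\textstyle\bigwedge^{r+1} W^*\right) \xrightarrow{p} \P H^0(X, K_X \otimes L^{r+1}),
\]
where $i$ is the Plücker embedding and $p$ is the linear projection induced by $\bigwedge^{r+1}$ of the evaluation map $e \from W \otimes \O_X \to P(L)$, with $W$ the space of linear forms and $L = \O_X(1)$. The center of the linear projection $p$ is cut out by the image of $b$, so $p$ fails to be defined on $\Gr(r+1,W)$ precisely at those points $[V] \in \Gr(r+1,W)$ whose Plücker image lies in the center of $p$, i.e.\ where the section $r_V$ of \eqref{eqn:ramsection} vanishes identically. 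By \autoref{prop:proj}, this happens exactly when $(L,V)$ fails to be properly ramified, which in turn (again by \autoref{prop:proj}, using characteristic zero) happens exactly when the rational map $p_{V,L} \from X \dashrightarrow \P V$ is \emph{not} dominant.

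The key step is then to invoke incompressibility: for every linear subspace $L' \subset \P^n$ of dimension $n-r-1$, projection from $L'$ restricts to a dominant rational map $X \dashrightarrow \P^r$. Every $(r+1)$-dimensional subspace $V \subset W$ arises as the space of linear forms vanishing on such an $L'$ (namely $L' = \P(\mathrm{Ann}\, V)$), and the associated map $p_{V,L}$ is precisely projection from $L'$. Hence incompressibility says that $p_{V,L}$ is dominant for \emph{every} $V$, so by the previous paragraph every projection $(L,V)$ is properly ramified, the open set $U$ is all of $\Gr(r+1, W)$, and the linear projection $p$ is defined on the entire Plücker image of the Grassmannian. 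Therefore $\rho_X = p \circ i$ is a morphism from the projective variety $\Gr(r+1, W)$. A morphism given by a linear system (a restriction of a linear projection) has finite fibers, since a positive-dimensional fiber would be contracted, contradicting that $i$ is a closed embedding and $p$ is linear on its domain of definition — concretely, the fibers of $p$ are intersections of the Plücker-embedded Grassmannian with linear subspaces through the center, and these are finite because no positive-dimensional subvariety of the Grassmannian can be contracted by a projection that is regular on it (the pullback of $\O(1)$ is ample). This gives that $\rho_X$ is finite onto its image.

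The main obstacle — really the only substantive point — is the identification of the indeterminacy locus of $p$ with the locus of non-properly-ramified projections, and the translation of "properly ramified" into "projection from the corresponding center is dominant." Both are essentially bookkeeping given \autoref{prop:proj} and the factorization already established, so I expect the proof to be short; the conceptual content is entirely in recognizing that incompressibility is exactly the statement that $U = \Gr(r+1,W)$.
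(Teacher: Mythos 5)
Your proposal is correct, and the heart of it coincides with the paper's proof: incompressibility means every projection $(L,V)$ is dominant onto $\P V$, hence properly ramified by \autoref{prop:proj}, so $\rho_X$ is regular on all of $\Gr(r+1,W)$. The only divergence is in the concluding step. The paper deduces finiteness from the fact that $\Pic$ of a Grassmannian has rank one, so any regular map from it is either constant or finite (and then checks non-constancy); you instead use the factorization of $\rho_X$ as the Pl\"ucker embedding followed by a linear projection, observing that a linear projection whose center misses a projective variety restricts to a finite morphism on it (equivalently, $\rho_X^*\O(1) \cong \O_{\Gr}(1)$ is ample). Both closing arguments are standard one-liners; yours has the small advantage of not needing to rule out the constant case separately, while the paper's avoids invoking the Pl\"ucker factorization at all.
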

\begin{proof}
  Set $L = \O(1)$ and let $W \subset H^0(X, L)$ be the image of $H^0(\P^n, \O(1))$.
  Let $V \subset W$ be an $(r+1)$-dimensional subspace.
  Since $X$ is incompressible, the projection map $p_{V,L} \from X \dashrightarrow \P V$ induced by $(L, V)$ is dominant.
  By \autoref{prop:proj}, this implies that $(L, V)$ is properly ramified.
  Since $V$ was arbitrary, the projection-ramification map 
  \[ \rho \from \Gr(r+1, W) \to |K_X + (r+1) H|\]
  is regular.
  Since the Picard rank of a Grassmannian is $1$, a regular map from a Grassmannian is either constant or finite.
  It is easy to check that $\rho$ is not constant; so it must be finite.
\end{proof}

For the proof of part \eqref{item:dual} of \autoref{thm:main}, we proceed inductively by showing that a general $(n-r-1)$-dimensional linear subspace which is incident to $X$ is an isolated point in its fiber under $\rho$.
Again, it is more convenient to work with the more abstract set-up of a linear series, allowing for series that are not very ample.

Let $X$ be a proper variety of dimension $r$, and let $(L, W)$ be a linear series on $X$.
For an ideal sheaf $I \subset \O_X$ we denote by $W \otimes I$ the subspace of $W$ consisting of the sections that vanish modulo $I$. More precisely, if $K$ is the kernel of the evaluation map
\[ W \otimes \O_X \to L \otimes \O_X/I,\]
then $W \otimes I = H^0(X, K)$.
In particular, for $W = H^0(X, L)$, we have $W \otimes I = H^0(X, L \otimes I)$.
For $s \in W \otimes I$, the vanishing locus $v(s)$ refers to the vanishing locus of $s$ as a section of $L$.
We set $|W| = \P W^*$, the space of one-dimensional subspaces of $W$, and likewise $|W \otimes I| = \P (W \otimes I)^*$.
For a complete linear series, we write $|L|$ for $|W|$.
Note that $v(s) = v(\lambda s)$ for a non-zero scalar $\lambda$, so it causes no ambiguity to talk about $v(s)$ for $s \in |W|$.

\subsection{Non-defective linear series}\label{sec:non-defectivity}
We study a positivity property of linear series that generalizes the property of having a divisorial dual.
\begin{definition}[Non-defective linear series]
  \label{def:genericallynon-defective} 
  We say that a linear series $(L, W)$ is \emph{non-defective} if,  for a general point $x \in X$ either $W \otimes \mathfrak m_x^2 = 0$, or there exists $s \in W \otimes \mathfrak m_x^2$ such that $v(s)$ has an isolated singularity at $x$.
\end{definition}
Note that for $s \in |W|$, the condition that $v(s)$ have an isolated singularity at $x$ is a Zariski open condition on $|W|$.
Therefore, if there exists an $s \in |W \otimes \mathfrak m_x^2|$ such that $v(s)$ has an isolated singularity at $x$, then a general $s \in |W \otimes \mathfrak m_x^2|$ has the same property.
\begin{remark}
  Let $x$ be a point of $X$.
  Suppose there exists $s \in |W|$ with an isolated singularity at $x$.
  It may be tempting to conclude from this that $(L, W)$ is non-defective.
  This is not necessarily true!
  For example, take $X = \F_3$.
  Denote by $E$ the section of self-intersection $-3$ and $F$ the fiber of the projection $\F_3 \to \P^1$.
  Let $L = \O_X(E + 2F)$ and $W = H^0(X, L)$.
  For $x \in E$, the general member of $|W \otimes \mathfrak m_x^2|$ has an isolated singularity at $x$, but the same is not true for a general $x \in X$.
\end{remark}

\begin{remark}
  Suppose $(L, W)$ is non-defective.
  Let $x \in X$ be general, and let $s \in |W|$ be such that $v(s)$ has an isolated singularity at $x$.
  For all such $s$, it may be the case $v(s)$ has singularities away from $x$, even along a positive dimensional locus.
  For example, let $\pi \from X \to \P^2$  be the blow-up at a point, and $E$ the exceptional divisor.
  The complete linear series associated to $L = \pi^* \O(2) \otimes \O(2E)$ is non-defective, but for every global section of $L$, the singular locus of $v(s)$ contains $E$.
\end{remark}

We now define the conormal variety of a linear series, which plays an important role in our analysis of non-defectivity.
Let $K$ be the kernel of the evaluation map
\[ e \from W \otimes \O_X \to P(L).\]
Let $U \subset X$ be an open subset such that $K|_U$ is locally free and the dual of the inclusion 
\[W^* \otimes \O_U \to K|_U^*\]
is a surjection.
This surjection defines a closed embedding $\P(K|_U) \subset U \times |W|$.
The \emph{conormal variety of $(L,W)$}, denoted by $P_{L,W}$, is the closure of $\P(K|_U)$ in $X \times |W|$.

\begin{proposition}\label{prop:dimension}
  \label{prop:dimP}
  Suppose $(L, W)$ is non-defective.
  If $\dim W \geq r+2$, then $P_{L,W}$ is irreducible of dimension $\dim W - 2$.
  If $\dim W \leq r+1$, then $P_{L,W}$ is empty.
\end{proposition}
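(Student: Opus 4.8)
The whole proposition reduces to computing the generic rank $\rho$ of the evaluation map $e \from W \otimes \O_X \to P(L)$; note $\rho \leq \min(r+1, \dim W)$, since $P(L)$ has rank $r+1$ on $X^{\rm sm}$. By construction, over the dense open $U$ the kernel $K = \ker e$ is a subbundle of $W \otimes \O_U$ of rank $\dim W - \rho$, and $P_{L,W}$ is the closure in $X \times |W|$ of the projective bundle $\P(K|_U) \to U$. Since $U$ is irreducible of dimension $r$, the variety $P_{L,W}$ is \emph{automatically} irreducible; it has dimension $r + (\dim W - \rho - 1)$ when $\rho < \dim W$, and $P_{L,W} = \varnothing$ when $\rho = \dim W$ (then $K|_U = 0$). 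So it suffices to show that non-defectivity of $(L, W)$ forces $\rho = \min(r+1, \dim W)$: if $\dim W \leq r+1$ this reads $\rho = \dim W$, whence $P_{L,W} = \varnothing$; if $\dim W \geq r+2$ it reads $\rho = r+1$, whence $\dim P_{L,W} = r + (\dim W - r - 2) = \dim W - 2$.

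\textbf{Pinning down $\rho$.} I would prove $\rho = \min(r+1, \dim W)$ by contradiction. Assume $\rho < \min(r+1, \dim W)$; then $\rho \leq r$, and, since $\dim(W \otimes \fm_x^2) = \dim \ker(e \otimes k(x)) = \dim W - \rho$ for general $x$, also $W \otimes \fm_x^2 \neq 0$ for general $x$. Consider the morphism $\phi \from X^\circ \to \P W$ induced by $W$ on the dense open $X^\circ \subseteq X^{\rm sm}$ away from the base locus of $W$, and set $Y = \overline{\phi(X^\circ)} \subseteq \P W$. The generic rank of $d\phi$ equals $\rho - 1$, so $\dim Y = \rho - 1 \leq r - 1 < r$. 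By generic smoothness, $\phi$ is a smooth morphism onto $Y$ over a dense open, so for general $x \in X^\circ$ the fibre $F_x = \phi^{-1}(\phi(x))$ lies in the smooth locus of $\phi$ and is pure of dimension $r - \dim Y \geq 1$. Now pick any $s \in W \otimes \fm_x^2$ with $s \neq 0$, and write $s = \phi^* \ell$ with $\ell$ cutting a hyperplane $H \subseteq \P W$ not containing $Y$ (as $s \neq 0$ and $X^\circ$ is dense), so that $v(s)$ agrees with $\phi^{-1}(H \cap Y)$ on $X^\circ$. Since $s$ vanishes to order $\geq 2$ at the point $x$, where $\phi$ is smooth, the hyperplane section $H \cap Y$ is singular at $\phi(x)$; since $\phi$ is smooth along $F_x$, the divisor $v(s)$ is then singular along $\phi^{-1}(\sing(H \cap Y)) \supseteq F_x \ni x$. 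Thus $x$ is not an isolated point of $\sing v(s)$. As this holds for every nonzero $s \in W \otimes \fm_x^2$ and every general $x$, we conclude that at a general $x$ we have $W \otimes \fm_x^2 \neq 0$ yet no section in $W \otimes \fm_x^2$ has an isolated singularity at $x$ --- contradicting non-defectivity.

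\textbf{The main obstacle.} The substance is the rank equality: non-defectivity forces $(L, W)$ to separate tangent vectors at a general point once $\dim W \geq r+2$ (and forces $W \otimes \fm_x^2 = 0$ generically when $\dim W \leq r+1$). The mechanism is that any drop of $\rho$ below $r+1$ makes the image $Y = \overline{\phi(X^\circ)}$ have dimension $< r$, so the fibres of $\phi$ through a general point are positive-dimensional, and then \emph{every} section of $W$ singular at that point is automatically singular along the entire fibre --- so no section of $W$ can have an isolated singularity there. The points requiring care are: handling the base locus of $W$ by restricting to $X^\circ$, where $v(s)$ really agrees with $\phi^{-1}(H \cap Y)$ and $\phi$ is a morphism; the equivalence ``$v(s)$ singular at $x$'' $\Leftrightarrow$ ``$H \cap Y$ singular at $\phi(x)$'' for the smooth morphism $\phi$; and the appeal to generic smoothness (hence to characteristic zero) ensuring that the \emph{entire} fibre $F_x$ --- not merely the point $x$ --- lies in the smooth locus of $\phi$, so that $v(s)$ is singular all along $F_x$.
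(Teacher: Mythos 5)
Your proof is correct, and it reaches the key point by a genuinely different route from the paper's. Both arguments reduce the proposition to showing that non-defectivity forces the evaluation map $e \from W \otimes \O_X \to P(L)$ to have maximal generic rank $\min(r+1,\dim W)$ (irreducibility being automatic for the closure of a projective bundle over the irreducible open $U$). The paper works on the conormal variety itself: non-defectivity makes $\sigma \from P_{L,W} \to |W|$ generically finite onto its image, giving $\dim P_{L,W} \leq \dim |W|$, and then Bertini's theorem (applied away from the base locus and $\sing X$) shows $\sigma$ is not surjective, which shaves off the final dimension. You instead argue by contradiction on the source: a rank drop forces the image $Y$ of $\phi_W$ to have dimension $\rho-1 < r$, so by generic smoothness the fibre of $\phi_W$ through a general $x$ is positive-dimensional and contained in the smooth locus of $\phi_W$; since every $s \in W$ is pulled back from a hyperplane in $\P W$, any nonzero $s \in W \otimes \fm_x^2$ is then singular along the entire fibre through $x$, so no isolated singularity can occur and non-defectivity fails. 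The two proofs use the same characteristic-zero input in different packaging (Bertini versus generic smoothness of $\phi_W$), but yours makes the geometric mechanism behind defectivity explicit --- the linear series collapses positive-dimensional fibres --- and it delivers the tangent-separation statement of \autoref{prop:genericSeparateTangents} directly rather than as a corollary of the rank count. The details you flag as needing care (restricting to $X^\circ$, the equivalence of singularity of $v(s)$ at $x$ with singularity of $H\cap Y$ at $\phi(x)$ via surjectivity of $d\phi_W$ at $x$, and smoothness along the whole fibre) are exactly the right ones, and they all go through.
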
 

\begin{proof}
  Set $n = \dim |W| = \dim W - 1$.
  Let $k$ be the (generic) rank of $K$, namely the rank of the locally free sheaf $K|_U$.
  Then $k \geq n-r$.
  The statement of the proposition is equivalent to showing that if $k > 0$, then $k = n-r$.

  For brevity, set $P = P_{L,W}$.
  Consider the projection $\sigma \from P \to |W|$, obtained by restricting the second projection $X \times |W| \to |W|$.
  For $s \in |W|$, we view $\sigma^{-1}(s)$ as a subscheme of $X$.
  We then have
  \begin{align*}
    \sigma^{-1}(s) \cap U = \sing(v(s)) \cap U.
  \end{align*}

  Suppose $r>0$.
  Then $P$ is non-empty and irreducible, since it is the closure of a non-empty and irreducible variety.
  Since $(L,W)$ is non-defective, a general point $(x,s) \in P$ is such that $x$ is an isolated point of $\sing(v(s))$.
  Therefore, $\sigma \from P \to |W|$ is generically finite onto its image.
  We conclude that $\dim P \leq \dim |W|$, and hence $k \leq n-r+1$.

  To show that $k = n-r$, it suffices to show that $\sigma \from P \to |W|$ is not surjective.
  We do so using Bertini's theorem.
  Let $B \subset X$ denote the union of the base locus of $|W|$ and the singular locus of $X$.
  Then $B$ is a proper closed subset of $X$.
  Let $P^B \subset P$ be the pre-image of $B$ under the projection $\pi \from P \to X$.
  By the definition of $P$, the map $\pi \from P \to X$ is surjective, and hence $P^B$ is a proper closed subset of $P$.
  Since $P$ is irreducible, we have $\dim P^B < \dim P \leq \dim |W|$, so the projection $P^B \to |W|$ cannot be dominant.
  Let $s \in |W|$ be general, in particular, not in the image of $P^B \to |W|$.
  By Bertini's theorem $v(s)$ is non-singular away from $B$.
  Thus, for any $x \in X$, the point $(x, s) \in X \times |W|$ does not lie in $P$.
  For $x \in B$, this is because $s$ is not in the image of $P^B$, and for $x \not \in B$, this is because $v(s)$ is non-singular at $x$.
  We conclude that $s$ does not lie in the image of $P \to |W|$.
  Hence $P \to |W|$ is not surjective.
\end{proof} 

\begin{proposition}
  \label{prop:dimensionCriterion}
  Let $(L, W)$ be a linear series with $\dim W \geq r+2$, and let $P = P_L$ be its conormal variety.
  The projection $\sigma\from P \to |W|$ is generically finite onto its image if and only if $(L, W)$ is non-defective. 
\end{proposition}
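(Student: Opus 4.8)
The plan is to recognize this as essentially a restatement of \autoref{prop:dimP} together with its proof, now phrased as an equivalence, and to prove it by chasing the relationship between the fibers of $\sigma$ and singular loci of hyperplane sections. First I would set $P = P_{L,W}$ and recall, as established in the proof of \autoref{prop:dimP}, the identity $\sigma^{-1}(s) \cap U = \sing(v(s)) \cap U$ for $s \in |W|$, where $U$ is the open set on which $K$ is locally free and the relevant dual map is surjective. The key point is that, since $K|_U$ has rank $k \geq n-r$ (with $n = \dim |W| - 1$), the fiber of $\pi \from P \to X$ over a point of $U$ has dimension $k - 1 \geq n - r - 1$, so $\dim P \geq n - 1$; equality $\dim P = n-1$ holds if and only if $k = n-r$. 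Thus $\sigma$ is generically finite onto its image iff $\dim P \leq \dim|W| = n$, but in fact (by dominance of $\pi$ and irreducibility of $P$) this is iff $\dim P = n-1$, i.e.\ iff $k = n-r$. Hence the whole statement reduces to: $k = n-r$ $\iff$ $(L,W)$ is non-defective.

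For the forward direction of that reduced equivalence, suppose $(L,W)$ is non-defective; then \autoref{prop:dimP} directly gives $\dim P = n-1$, hence $k = n-r$, hence $\sigma$ is generically finite onto its image (the general fiber of $\sigma$ meets $U$ in a finite set because the general member has an isolated singularity where it is singular at all — more carefully, one argues as in the proof of \autoref{prop:dimP} that a general $(x,s) \in P$ has $x$ isolated in $\sing v(s)$, so $\sigma$ is generically finite onto its image). For the converse, suppose $(L,W)$ is \emph{not} non-defective. Then for a general $x \in X$ we have $W \otimes \mathfrak m_x^2 \neq 0$, yet every $s \in |W \otimes \mathfrak m_x^2|$ has a non-isolated singularity at $x$; equivalently, for every such $s$, the point $x$ lies in a positive-dimensional component of $\sing v(s)$. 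I would translate this into the statement that the fiber of $\pi \from P \to X$ over a general $x$, together with the fibers of $\sigma$ through it, forces $\dim P > n-1$: concretely, over a general $x$ the set of $s$ with $x \in \sing v(s)$ is $|W \otimes \mathfrak m_x^2|$ of dimension $k-1$, and non-isolatedness means $\sigma$ restricted to $\pi^{-1}(x)$ has positive-dimensional image after accounting for the moving singular point, so $\dim \sigma(P) \geq (k-1) + 1$ while also $\dim P \geq \dim X + (k-1)$... I would instead run the cleaner contrapositive: $\sigma$ generically finite onto its image $\implies \dim P \leq n \implies k \leq n-r+1$, and then exclude $k = n-r+1$ exactly as in \autoref{prop:dimP} via the Bertini argument (a general $s \in |W|$ is smooth away from the base locus and singular locus of $X$, hence not in the image of $\sigma$, so $\dim \sigma(P) \leq n-1$; combined with generic finiteness this gives $\dim P \leq n-1$, so $k = n-r$), and then observe that $k = n-r$ means the general fiber of $\pi$ is a single point of $|W|^\vee$, i.e.\ a general $x$ lies in $\sing v(s)$ for a unique $s \in |W \otimes \mathfrak m_x^2|$ and is isolated there — which is exactly non-defectivity.

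The main obstacle is the precise bookkeeping in the converse direction: one must show that \emph{generic finiteness of $\sigma$ onto its image} (not just onto $|W|$) is strong enough to pin down $k = n-r$, and that $k = n-r$ really does yield the isolated-singularity conclusion for a \emph{general} $x$ and not merely a general pair $(x,s) \in P$. The cleanest route, which I would adopt, is to argue that when $k = n-r$ the projection $\pi \from P \to X$ is generically finite (indeed generically one-to-one onto its image in an appropriate sense), so that the isolated-singularity property — which is open on $P$ and holds on a dense open subset whenever $\sigma$ is generically finite onto its image — descends to the statement ``for general $x$, the unique $s$ with $(x,s) \in P$ has $x$ isolated in $\sing v(s)$,'' giving non-defectivity. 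Conversely when $(L,W)$ is non-defective, \autoref{prop:dimP} already does the work. So the proposition follows by combining \autoref{prop:dimP} with this fiber-dimension dichotomy, the only genuinely new content being the translation between ``$\sigma$ generically finite onto its image'' and ``$\dim P = \dim W - 2$,'' which is the $k \leq n-r+1$ vs.\ $k = n-r$ distinction settled by the Bertini argument already present in the proof of \autoref{prop:dimP}.
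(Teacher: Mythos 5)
There is a genuine gap: your central reduction ``$\sigma$ is generically finite onto its image $\iff \dim P = n-1 \iff k = n-r$'' is false in the direction you need for the converse. Knowing $\dim P = n-1$ (equivalently $k=n-r$) says nothing about whether $\sigma$ collapses $P$ onto a lower-dimensional image. A concrete counterexample is the Segre variety $X = \P^{r-1}\times\P^1 \subset \P^{2r-1}$ with $r\geq 3$ and its hyperplane series: the embedding is very ample, so $e$ is surjective and $k=n-r$, yet the dual variety (which is exactly $\sigma(P)$ here, by \autoref{prop:non-deg-dual}) is degenerate, so $\sigma$ is \emph{not} generically finite onto its image and the series is defective. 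So the proposition cannot be reduced to a statement about $k$ alone. The patch you propose for the converse is also incorrect: when $k=n-r$ the fiber of $\pi\from P\to X$ over a general $x\in U$ is $\P(K|_x)\cong\P^{k-1}=\P^{n-r-1}$, which is a single point only when $\dim W = r+2$; in general $\pi$ is far from generically one-to-one, and in any case uniqueness of the $s$ singular at $x$ (a statement about fibers of $\pi$) would not imply that $x$ is isolated in $\sing(v(s))$ (a statement about fibers of $\sigma$).

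The intended argument is much more direct and does not pass through $k$ at all. Since $P$ is irreducible and $\pi\from P\to X$ is dominant with fiber $|W\otimes\mathfrak m_x^2|$ over $x\in U$, a general point $(x,s)\in P$ consists of a general $x\in X$ and a general $s\in|W\otimes\mathfrak m_x^2|$. The identity $\sigma^{-1}(s)\cap U = \sing(v(s))\cap U$ shows that $(x,s)$ is isolated in its $\sigma$-fiber if and only if $x$ is an isolated singularity of $v(s)$. Generic finiteness of $\sigma$ onto its image is equivalent to the general point of $P$ being isolated in its fiber, and by the openness of the isolated-singularity condition on $|W\otimes\mathfrak m_x^2|$, the latter condition at a general $(x,s)$ is exactly non-defectivity. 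This two-line equivalence-chase is essentially contained in one clause of your last paragraph, but it is obscured by, and logically independent of, the incorrect dimension-count reduction that the rest of the proposal is built on.
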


\begin{proof}
  Since $\dim W \geq r+2$, the conormal variety $P = P_{L,W}$ is non-empty.
  Let $(x,s) \in P$ be a general point.
  We may assume that $x \in U$.
  Then $x$ is a singular point of $v(s)$, and it is an isolated singularity of $v(s)$ if and only if $(x,s)$ is an isolated point in the fiber of $\sigma \from P \to |W|$ over $s$.
  The conclusion follows.
\end{proof}

The following observation relates non-defectivity with the non-degeneracy of the dual.
\begin{proposition}\label{prop:non-deg-dual}
  Let $X \subset \P^n$ be a non-degenerate projective variety.  Let $L = \O_X(1)$ and $W \subset H^0(X, L)$ the image of $H^0(\P^n, \O(1))$.
  Then $(L, W)$ is non-defective if and only if the dual variety $X^* \subset {\P^n}^*$ is a hypersurface.
\end{proposition}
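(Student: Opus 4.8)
The plan is to unwind both sides of the claimed equivalence in terms of the conormal variety $P_{L,W} \subset X \times |W|$ and the classical conormal variety of $X$, and then compare dimensions. Recall that with $L = \O_X(1)$ and $W$ the space of linear forms, $|W| = {\P^n}^*$, a point $s \in |W|$ is a hyperplane $H_s \subset \P^n$, and $v(s) = X \cap H_s$ as a divisor on $X$. Over the open set $U \subset X$ where $X$ is smooth and $K$ is locally free, the fiber of the projection $\sigma \from P_{L,W} \to X$ over $x$ consists of the hyperplanes $H$ that are tangent to $X$ at $x$ to second order, i.e. those containing the embedded tangent space $T_xX \subset \P^n$. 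In other words, $P_{L,W}$ agrees, over $X^{\rm sm}$, with the classical conormal variety $\mathcal{C}(X) \subset X \times {\P^n}^*$, whose image under the second projection is by definition the dual variety $X^*$. I would make this identification precise first, noting that $W \otimes \fm_x^2$ is exactly the space of linear forms vanishing to order $2$ at $x$, hence cutting out hyperplanes containing $T_xX$, so that the condition ``$v(s)$ has an isolated singularity at $x$'' translates into ``$X \cap H_s$ is singular at $x$ with $x$ isolated in the singular locus.''

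Next I would run the dimension bookkeeping. The classical conormal variety $\mathcal{C}(X)$ is always irreducible of dimension exactly $n-1$ (this is the biduality/conormal dimension count: an $r$-dimensional $X$ in $\P^n$ has conormal variety a $\P^{n-r-1}$-bundle over $X^{\rm sm}$, hence of dimension $r + (n-r-1) = n-1$). Since $P_{L,W}$ agrees with $\mathcal{C}(X)$ over the dense open $U$, and both are closures of their restrictions to $U$, we get $\dim P_{L,W} = n-1 = \dim |W| - 1$. Now $X^*$ is a hypersurface, i.e. $\dim X^* = n-1 = \dim |W| - 1$, precisely when the projection $\mathcal{C}(X) \to X^* \subset |W|$ is generically finite onto its image; equivalently, when $\sigma \from P_{L,W} \to |W|$ is generically finite onto its image. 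By \autoref{prop:dimensionCriterion} (noting $\dim W = n+1 \geq r+2$ since $X$ is non-degenerate of dimension $r < n$), this last condition holds if and only if $(L,W)$ is non-defective. Chaining these equivalences gives the proposition.

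The one point requiring a little care — and the step I expect to be the main obstacle — is verifying that $P_{L,W}$ really does coincide with the classical conormal variety of $X$ over $X^{\rm sm}$, rather than just mapping to it. This comes down to checking that the kernel $K$ of the evaluation map $W \otimes \O_X \to P(L)$ is, on $X^{\rm sm}$, the bundle whose fiber at $x$ is $W \otimes \fm_x^2 = \{$linear forms vanishing to order $2$ at $x\}$: indeed the evaluation into principal parts $P(L)$ records a section modulo $\fm_x^2 L$, so its kernel at $x$ is exactly $W \cap H^0(X, L \otimes \fm_x^2)$. The projectivization $\P(K)$ therefore parametrizes pairs $(x, [s])$ with $s$ vanishing to order $2$ at $x$, which is the conormal variety; the only subtlety is that $P(L)$ has rank $r+1$ while $W$ has dimension $n+1$, so $K$ has generic rank $n-r$, giving $\P(K|_U)$ dimension $r + (n-r-1) = n-1$ as expected, and confirming $K$ is locally free on a dense open. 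Once this dictionary is in place, everything else is the dimension count above together with an appeal to \autoref{prop:dimensionCriterion}.
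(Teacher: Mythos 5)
Your proposal is correct and follows essentially the same route as the paper: identify $P_{L,W}$ with the classical conormal variety via the kernel of $W \otimes \O_X \to P(L)$ (which has rank $n-r$ by very ampleness), conclude $\dim P_{L,W} = n-1$, and apply \autoref{prop:dimensionCriterion} to equate non-defectivity with generic finiteness of $P_{L,W} \to |W|$, i.e.\ with $\dim X^* = n-1$. Your explicit check that $\dim W = n+1 \geq r+2$ (needed to invoke \autoref{prop:dimensionCriterion}) is a welcome bit of care that the paper glosses over.
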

\begin{proof}
  Since $X \subset \P^n$ is not contained in a hyperplane, we have $\dim W = n+1 \geq r+1$.
  Since $(L, W)$ is very ample, it separates tangent vectors on $X$, so the evaluation map
  \[ e \from W \otimes \O_X \to P(L)  \]
  is surjective.
  It follows that the rank of the kernel is $n-r$, and hence
  \[ \dim P_{L,W} = (n-r - 1) + r = n-1.\]
  By definition, the dual variety $X^* \subset {\P^n}^* = |W|$ is the image of the conormal variety under the projection $P_{L,W} \to |W|$.
  By \autoref{prop:dimensionCriterion}, $(L, W)$ is non-defective if and only if $\dim X^* = n-1$.
\end{proof}

\begin{proposition}\label{prop:ordinarydoublepoint}
  Let $(L, W)$ be a non-defective linear series on $X$ with $\dim W \geq r+2$.
  Let $x \in X$ be a general point.
  Then there exists $s \in |W|$ such that $v(s)$ has an ordinary double point singularity at $x$.
\end{proposition}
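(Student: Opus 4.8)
The plan is to interpret the ordinary-double-point condition infinitesimally on the conormal variety $P := P_{L,W}$ and then to extract it from the fact that the projection $\sigma \from P \to |W|$ is generically finite. Since $(L,W)$ is non-defective and $\dim W \geq r+2$, \autoref{prop:dimP} tells us $P$ is irreducible of dimension $\dim W - 2$, and from the construction of $P$ the first projection $\pi \from P \to X$ is dominant; \autoref{prop:dimensionCriterion} tells us $\sigma$ is generically finite onto its image. Over the dense open set $U \cap X^{\rm sm}$ (after shrinking $U$ to lie in $X^{\rm sm}$) the variety $P$ coincides with the projective bundle $\P(K|_U)$, which is smooth, so a general point $p = (x, \langle s \rangle) \in P$ is a smooth point of $P$ lying over $X^{\rm sm}$ and over $U$; for such $p$ we have $s \in W \otimes \mathfrak m_x^2$ and $v(s)$ is singular at $x$.

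The heart of the argument is an infinitesimal dictionary: for $p = (x,\langle s\rangle) \in P$ with $x$ a smooth point of $X$, the kernel of $d\sigma_p$ is canonically the kernel of the Hessian of $s$ at $x$. To see this, trivialize $L$ near $x$ and pick local coordinates, so that $s$ becomes a function $f$ with $f(x) = 0$ and $df_x = 0$; let $H_f$ denote its Hessian at $x$, which — because $df_x = 0$ — is a well-defined symmetric bilinear form on $T_xX$, intrinsic up to a non-zero scalar (the choice of trivialization). Near $p$, the conormal variety is contained in the zero scheme of the $r+1$ incidence conditions in $X \times |W|$ asking that $s'$ vanish at $x'$ and that $(ds')_{x'} = 0$; since $p$ is a smooth point of $P$ and $\dim P = \dim W - 2 = (r + \dim W - 1) - (r+1)$, the space $T_pP$ is exactly the kernel of the linearization of these equations. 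A tangent vector is a pair $(v, \dot s)$ with $v \in T_xX$ and $\dot s$ in the tangent space to $|W|$ at $\langle s\rangle$, represented by a first-order variation $\dot f \in W$ of $f$ (well-defined up to scalars and up to adding multiples of $f$); differentiating the two incidence conditions and using $f(x) = 0$, $df_x = 0$ gives $\dot f(x) = 0$ and $(d\dot f)_x + H_f(v,\cdot) = 0$. Since $d\sigma_p$ reads off the component $\dot s$, its kernel consists of those vectors with $\dot f \in \langle f\rangle$ (i.e.\ $\dot s = 0$), for which the two equations collapse to $H_f(v,\cdot) = 0$; hence $\ker(d\sigma_p) \cong \ker(H_f)$.

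With the dictionary in place the argument finishes quickly. Since $\sigma$ is generically finite onto its image, generic smoothness — and here is one place where characteristic zero is essential — makes $\sigma$ étale onto its image over a dense open of $P^{\rm sm}$, so for a general $p = (x,\langle s\rangle) \in P$ the differential $d\sigma_p$ is injective, whence $H_f$ is non-degenerate. Then the quadratic term of $f$ at $x$ is a non-degenerate (rank $r$) quadratic form, so $v(s)$ has multiplicity exactly two at $x$ with smooth tangent cone, that is, an ordinary double point. Having an ordinary double point at $x$ is an open condition on $P$, so it holds on a dense open $P^{\circ} \subseteq P$; since $P$ is irreducible and $\pi$ is dominant, the generic point of $P$ lies in $P^{\circ}$ and maps to the generic point of $X$, so $\pi|_{P^{\circ}} \from P^{\circ} \to X$ is dominant. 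Therefore for a general $x \in X$ there is a point $(x, \langle s\rangle) \in P^{\circ}$, i.e.\ an $s \in |W|$ with $v(s)$ having an ordinary double point at $x$.

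The step I expect to be the main obstacle is the infinitesimal dictionary of the second paragraph: beyond the bookkeeping of differentiating the incidence conditions and checking that the Hessian is the intrinsic object (independent of the coordinates, and of the trivialization up to scalar), one must make sure that a general point of $P$ is simultaneously a smooth point of $P$, lies over $U \cap X^{\rm sm}$, and is a point where $\sigma$ is étale onto its image — each of these is a dense open condition on $P$, but they need to be aligned before the conclusion is drawn. Everything else is quoted from \autoref{prop:dimP} and \autoref{prop:dimensionCriterion} or is a standard use of generic smoothness.
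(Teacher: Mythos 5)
Your proposal is correct and follows essentially the same route as the paper: both deduce from \autoref{prop:dimensionCriterion} and generic smoothness (characteristic zero) that $\sigma \from P \to |W|$ is unramified at a general point $(x,\langle s\rangle)$, and both identify that condition with non-degeneracy of the Hessian of $s$ at $x$ — the paper by observing that the fiber $\sing(v(s))$ is then a reduced point, so the partials of $s$ are independent in $\mathfrak m_x/\mathfrak m_x^2$, you by computing $\ker(d\sigma_p)\cong\ker(H_f)$ from the linearized incidence conditions. The only point worth making explicit in your version is that $T_pP$ equals the full kernel of the linearization (so that a null vector of $H_f$ really gives a tangent vector of $P$ killed by $d\sigma_p$), which follows since the evaluation $W \to P(L)|_x$ is surjective at a general $x$ by \autoref{prop:genericSeparateTangents}.
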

\begin{proof}
  By \autoref{prop:dimensionCriterion}, the projection $\sigma\from P \to |W|$ is generically finite onto its image. 
  Let $(x,s) \in P$ be a general point.
  Since our ground field is of characteristic zero, we may assume that $P$ is smooth at $(x,s)$, that $x \in U \cap X^{\rm sm}$, and $\sigma \from P \to |W|$ is a local immersion at $(x,s)$.
  This implies that $x \in \sing(v(s))$ is isolated, and also that $x$ is a reduced point of the scheme $\sing(v(s))$.
  These two properties show that $v(s)$ possesses an ordinary double point at $x$.
  To see this, choose local coordinates $(x_{1}, ..., x_{n})$ so that the complete local ring ${\widehat{\O}_{X,x}}$ is isomorphic to $k\llbracket x_{1},\dots, x_{r}\rrbracket$.
  After choosing a local trivialization for $L$ around $x$, the section $s$ corresponds to a power series $s(x_1,\dots,x_r)$ contained in $\mathfrak m_x^2 \widehat \O_{X,x}$.
  The germ of $\sing(v(s))$ at $x$ is cut out by the power series $\frac{\partial s}{\partial x_1}, \dots, \frac{\partial s}{\partial x_r}$.
  Since the germ of $\sing(v(s))$ at $x$ is the reduced point $x$, we get that $\frac{\partial s}{\partial x_1}, \dots, \frac{\partial s}{\partial x_r}$ are linearly independent as elements of $\mathfrak m_x / \mathfrak m_x^2$.
  From this, it is easy to check that the tangent cone of $s(x_1, \dots, x_r)$ at $x$ is a non-degenerate quadric cone.
\end{proof}

\begin{proposition}
  \label{prop:genericSeparateTangents}
  If $(L, W)$ is a non-defective linear series with $\dim W \geq r+1$, then $W$ separates tangent vectors at a general point $x \in X$.
  That is, the evaluation map
  \[e_{x}\from W \otimes \O_X \to L/\mathfrak m_x^2 L\]
  is surjective for general $x \in X$.
\end{proposition}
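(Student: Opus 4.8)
The plan is to reduce the assertion to the generic surjectivity of the evaluation map of sheaves $e \from W \otimes \O_X \to P(L)$, and then to extract the latter directly from the dimension computation for the conormal variety carried out in \autoref{prop:dimP}.

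First I would record the reduction. Since $X$ is proper, $P(L)$ is coherent, so $Q := \coker(e)$ is coherent; if $e$ is surjective at the generic point of $X$, then $Q$ has rank zero, and $Z := \operatorname{Supp} Q \cup (X \setminus X^{\rm sm})$ is a proper closed subset of $X$. For $x \notin Z$ the stalk $Q_x$ vanishes, so $e$ is surjective on stalks at $x$; tensoring with the residue field and using that $x$ is a smooth point --- so that the fiber $P(L) \otimes k(x)$ is the $(r+1)$-dimensional jet space $L/\mathfrak m_x^2 L$ --- we conclude that $e_x \from W \to L/\mathfrak m_x^2 L$ is surjective. As $Z$ is a proper closed subset, this is precisely the desired statement for general $x$. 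Hence it suffices to prove that $e$ is generically surjective.

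Next I would run the dimension count. Let $K = \ker(e)$ and let $k$ be its generic rank. Because $P(L)$ has rank $r+1$ on $X^{\rm sm}$, the image of $e$ has generic rank $\dim W - k$, so $e$ is generically surjective if and only if $k = \dim W - r - 1$. On the dense open set $U \subseteq X$ used to define $P_{L,W}$, the conormal variety restricts to the projective bundle $\P(K|_U)$, with fibers $\P^{k-1}$; hence $\dim P_{L,W} = r + k - 1$, and moreover $P_{L,W} = \emptyset$ exactly when $k = 0$. If $\dim W \geq r+2$, then \autoref{prop:dimP} gives $\dim P_{L,W} = \dim W - 2$, so $r+k-1 = \dim W - 2$, i.e.\ $k = \dim W - r - 1$. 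If $\dim W = r+1$, then \autoref{prop:dimP} gives $P_{L,W} = \emptyset$, so $k = 0 = \dim W - r - 1$. In either case $e$ is generically surjective, which completes the argument.

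There is little to fear here: the substance already resides in \autoref{prop:dimP} (and the Bertini argument inside its proof), and what remains is bookkeeping. The two points deserving care are (i) identifying the generic rank of $\ker(e)$ with $\dim P_{L,W} - r + 1$ via the projective-bundle structure of the conormal variety over $U$ --- in particular remembering that the empty conormal variety corresponds to $k = 0$, which is the case that must be handled when $\dim W = r+1$ --- and (ii) checking that generic surjectivity of the sheaf map $e$ really forces surjectivity of the fiber maps $e_x$ at general points, for which one uses that a general point of $X$ is smooth, so that $P(L)$ has the expected fiber $L/\mathfrak m_x^2 L$ of dimension $r+1$ there.
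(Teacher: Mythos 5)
Your proof is correct and follows essentially the same route as the paper: reduce to generic surjectivity of $e \from W \otimes \O_X \to P(L)$ and extract the generic rank $k = \dim W - r - 1$ of $\ker(e)$ from the dimension statement of \autoref{prop:dimP} (the paper cites the proof of that proposition directly, while you recover $k$ from the projective-bundle description of the conormal variety, which amounts to the same count). The extra care you take with the $\dim W = r+1$ case and with passing from stalk surjectivity to fiber surjectivity at smooth points is sound but not a different argument.
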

\begin{proof}
  By the definition of $P(L)$, we have a natural isomorphism
  \[ P(L)|_x = L/\mathfrak m_x^2 L,\]
  so it suffices to show that the evaluation map
  \[ e \from W \otimes \O_X \to P(L)\]
  is surjective at $x$.
  Let $k$ be the generic rank of $K$, the kernel of $e$.
  From the proof of \autoref{prop:dimP}, we get
  \[  k = \dim W - r - 1.\]
  Since $(r+1)$ is the generic rank of $P(L)$, we conclude that $e$ is generically surjective.
\end{proof}
\begin{corollary}\label{cor:properlyramified}
  Suppose $(L, W)$ is a non-defective linear series on $X$ with $\dim W \geq r+1$.
  Then there exists a properly ramified projection $(L,V)$ of $(L, W)$.
\end{corollary}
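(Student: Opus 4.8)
The plan is to deduce this directly from \autoref{prop:genericSeparateTangents}, which already carries out the substantive work. First I would invoke \autoref{prop:genericSeparateTangents}: since $(L,W)$ is non-defective with $\dim W \geq r+1$, the evaluation map $e \from W \otimes \O_X \to P(L)$ is surjective at a general point $x \in X$. I would fix such a point $x$, noting that we may take $x \in X^{\rm sm}$ since the smooth locus is dense; then $P(L)$ is locally free of rank $r+1$ near $x$, and $P(L)|_x$ is a vector space of dimension $r+1$ on which $e_x \from W \to P(L)|_x$ is surjective.

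Next I would choose a linear subspace $V \subset W$ of dimension $r+1$ mapping isomorphically onto $P(L)|_x$ under $e_x$. Such a $V$ exists because $e_x$ is a surjection of finite-dimensional vector spaces and hence splits, so we may take $V$ to be the image of a splitting. By construction $\dim V = r+1$ and $V \subset W$, so $(L,V)$ is a projection of $(L,W)$ in the sense of \autoref{sec:prmap}.

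Finally I would verify that $(L,V)$ is properly ramified. The induced evaluation map $e_V \from V \otimes \O_{X^{\rm sm}} \to P(L)|_{X^{\rm sm}}$ is a morphism of locally free sheaves of the same rank $r+1$, and it is an isomorphism at $x$ by construction. Hence the section $\det e_V$ of the line bundle $\Hom(\det V \otimes \O_{X^{\rm sm}}, K_{X^{\rm sm}} \otimes L^{r+1})$ is nonzero at $x$; since $X^{\rm sm}$ is irreducible, $\det e_V$ does not vanish on a dense open subset, so $e_V$ is an isomorphism over a general point of $X$. By \autoref{def:properlyramified}, the projection $(L,V)$ is properly ramified, which is the assertion.

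I do not expect any genuine obstacle: once generic separation of tangent vectors is in hand, the corollary reduces to a pointwise splitting argument together with the openness of the locus where a map between locally free sheaves of equal rank is an isomorphism. The only mild point requiring care is to ensure the chosen point $x$ is simultaneously general (so that $e$ is surjective there) and smooth (so that $P(L)$ has the expected rank there), which is automatic since each condition holds on a dense open subset of $X$.
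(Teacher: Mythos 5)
Your proposal is correct and is exactly the intended argument: the paper's proof consists of the single line that the corollary follows immediately from \autoref{prop:genericSeparateTangents}, and your write-up simply spells out the pointwise splitting and the openness of the isomorphism locus that make this immediate. No gaps.
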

\begin{proof}
  This follows immediately from \autoref{prop:genericSeparateTangents}.
\end{proof}
As a consequence of \autoref{cor:properlyramified}, the projection-ramification rational map $\rho_{X,L, W}$ is defined for a non-defective linear series $(L, W)$ with $\dim W \geq r+1$.

Let $\pi \from \widetilde X \to X$ be the blow-up at a point $x \in X$, and $E \subset \widetilde X$ the exceptional divisor.
A linear series $(L, W)$ on $X$ gives a linear series $(\widetilde L, \widetilde W)$ as follows.
Take $\widetilde L = \pi^* L \otimes \O_{\widetilde X}(-E)$.
Note that $H^0(X, L) = H^0(\widetilde X, \pi^*L)$, so we may think of $W$ as a subspace of $H^0(\widetilde X, \pi^*L)$.
Take $\widetilde W = W \otimes \O_{\widetilde X}(-E)$ with its natural inclusion $\widetilde W \subset H^0(\widetilde X, \widetilde L)$.
\begin{proposition}
  \label{prop:blowuppoint}
  In the setup above, if $(L, W)$ is non-defective, $\dim W \geq r+2$, and $x \in X$ is general, then $(\widetilde L, \widetilde W)$ is also non-defective.
\end{proposition}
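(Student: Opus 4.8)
The plan is to use the characterization of non-defectivity in terms of the conormal variety, namely \autoref{prop:dimensionCriterion}: since $\dim \widetilde W = \dim W \geq r+2$, it suffices to show that the projection $\widetilde\sigma \from P_{\widetilde L, \widetilde W} \to |\widetilde W|$ is generically finite onto its image. The key observation is that, away from the exceptional locus, the linear series $(\widetilde L, \widetilde W)$ on $\widetilde X$ and the ``sub-linear-series'' $(L, W \otimes \mathfrak m_x)$ on $X$ record the same data: a section of $\pi^* L \otimes \O(-E)$ is the same thing as a section of $L$ vanishing at $x$, and the blow-up $\pi$ is an isomorphism over $X \setminus \{x\}$. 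So I would first set up a birational identification of $P_{\widetilde L, \widetilde W}$ with (the closure in $X \times |W \otimes \mathfrak m_x|$ of) the locus of pairs $(y, s)$ with $y \neq x$ a point and $s \in |W \otimes \mathfrak m_x|$ a section singular at $y$ — the conormal variety of the ``incomplete'' series $(L, W\otimes \mathfrak m_x)$, which makes sense verbatim from the definitions in \S\ref{sec:non-defectivity}.

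Next I would bound dimensions. Because $x$ is general and $(L,W)$ is non-defective, \autoref{prop:genericSeparateTangents} (applied to $(L,W)$) and an incidence-correspondence count show that $W \otimes \mathfrak m_x$ still separates tangent vectors at a general point $y \in X$: imposing a single point $x$ drops the dimension of $W$ by exactly one, and for general $x$ this does not kill the (generically surjective) evaluation map $W \otimes \O_X \to P(L)$ at a general $y$. Hence the evaluation map for $W \otimes \mathfrak m_x$ has kernel of generic rank exactly $(\dim W - 1) - (r+1)$, so $P_{\widetilde L, \widetilde W}$ — equivalently the conormal variety of $(L, W\otimes\mathfrak m_x)$ — is irreducible of the expected dimension $\dim\widetilde W - 2 = \dim W - 3$, by the same argument as in the proof of \autoref{prop:dimP}. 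Now I invoke non-defectivity of $(L,W)$ in the sharper form of \autoref{prop:ordinarydoublepoint}: for general $y \in X$ there is $s \in |W|$ with an ordinary double point at $y$. An ordinary double point is a codimension-$r$ condition, so after imposing $s(x) = 0$ (one further condition, harmless since $x$ is general and $y$ is general — we may choose the order of generality so that $y$ varies over a dense open set for each fixed general $x$, or equivalently work on the universal family over $X \times X$), we still find $s \in |W \otimes \mathfrak m_x|$ with an ordinary double point at $y$; in particular $y$ is an \emph{isolated} singularity of $v(s)$. By \autoref{prop:dimensionCriterion} applied to $(L, W \otimes \mathfrak m_x)$ (legitimate: its $P$ is non-empty of the computed dimension), this says precisely that $(L, W\otimes\mathfrak m_x)$ is non-defective, hence so is $(\widetilde L, \widetilde W)$.

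The step I expect to be the main obstacle is the ``two general points'' bookkeeping: the hypothesis gives an ordinary double point at a general $y$ for the \emph{full} series $W$, and I need one that survives the extra vanishing condition at a general $x$, with $x$ being the point we blew up. The clean way to handle this is to work with the universal incidence variety over $X \times X$ (pairs $(x, y, s)$ with $s \in |W|$, $s(x) = 0$, and $v(s)$ singular at $y$) and project to the first factor, showing the general fiber over $x \in X$ has a component dominating $X$ via $y$ on which the singularity at $y$ is an ordinary double point — this follows because the ``ordinary double point at $y$'' locus inside $X \times |W|$ is open and, by \autoref{prop:ordinarydoublepoint}, dominates $X$, so its intersection with the divisor $\{s(x) = 0\}$ still dominates $X$ for general $x$ by a standard generic-flatness / dimension-count argument. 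Everything else is a transcription of the proofs of \autoref{prop:dimP} and \autoref{prop:ordinarydoublepoint} to the incomplete series $(L, W \otimes \mathfrak m_x)$.
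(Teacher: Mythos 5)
Your overall strategy is sound and, once unwound, is essentially the paper's argument in different packaging: the paper skips the conormal variety entirely and directly produces, for general $y$, a section $s \in W \otimes \mathfrak m_x \cdot \mathfrak m_y^2 = \widetilde W \otimes \mathfrak m_y^2$ with an isolated singularity at $y$ (your ordinary-double-point version via \autoref{prop:ordinarydoublepoint} is a slightly stronger form of the same step, and your ``vanishing at $x$ is one harmless extra condition for general $x$'' argument is exactly the paper's ``since $\dim(W\otimes\mathfrak m_y^2)\ge 2$, some member of the dense open locus passes through the general point $x$''). Note that once you have such an $s$, you are already done by \autoref{def:genericallynon-defective}; the detour back through \autoref{prop:dimensionCriterion} is harmless but unnecessary.

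There is, however, a concrete miscount that breaks your opening move in a boundary case. Since $\widetilde W = W \otimes \mathfrak m_x$ and $x$ is general (hence not a base point of $W$), we have $\dim \widetilde W = \dim W - 1$, not $\dim W$ — indeed you implicitly use the correct count later when you write $\dim \widetilde W - 2 = \dim W - 3$. Consequently, when $\dim W = r+2$ you have $\dim\widetilde W = r+1$ and the hypothesis of \autoref{prop:dimensionCriterion} fails for $(\widetilde L, \widetilde W)$; worse, the generic rank of the kernel of the evaluation map is then $0$, so $P_{\widetilde L,\widetilde W}$ is (generically) empty and there is nothing to be ``generically finite.'' In this case one must fall back on the first clause of \autoref{def:genericallynon-defective}: the computation $\dim(W\otimes\mathfrak m_x\cdot\mathfrak m_y^2) = \dim W - (r+2) = 0$ shows $\widetilde W \otimes \mathfrak m_y^2 = 0$ for general $y$, which is how the paper disposes of this case before running the main argument. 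Your proof also needs this bifurcation, since your step producing an ODP section through $x$ uses $\dim(W\otimes\mathfrak m_y^2)\ge 2$, which requires $\dim W \ge r+3$. With that case split added (and the statement ``the ODP locus is open in $X\times|W|$'' corrected to ``open in the conormal variety $P_{L,W}$''), the argument is complete.
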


\begin{proof}
  Let $y$ be a general point of $\widetilde X$.
  We have the equality
  \[ \widetilde W \otimes \mathfrak m_y^2 = W \otimes \mathfrak m_x \cdot \mathfrak m_y^2. \]
  By \autoref{prop:genericSeparateTangents}, for a general $y \in X$, we have
  \[ \dim (W \otimes \mathfrak m_y^2) = \dim W - (r+1).\]
  Since $x \in X$ is general, we get
  \[ \dim (W \otimes \mathfrak m_x \cdot \mathfrak m_y^2) = \dim W - (r+2).\]
  If $\dim W = r+2$, then we get $\widetilde W \otimes \mathfrak m_y^2 = 0$, so we are done.
  Assume that $\dim W \geq r+3$.
  Then $\dim (W \otimes \mathfrak m_y^2) \geq 2$.
  Since $(L, W)$ is non-defective, a general $s \in W \otimes \mathfrak m_y^2$ is such that $v(s)$ has an isolated singularity at $y$.
  Moreover, since $\dim (W \otimes \mathfrak m_y^2) \geq 2$, for every $x \in X$, there exists $s \in V$ such that $v(s)$ passes through $x$.
  Hence, as $x \in X$ is general, there exists $s \in W \otimes \mathfrak m_y^2$ such that $v(s)$ has an isolated singularity at $y$ and passes through $x$.
  That is, there exists $s \in \widetilde W \otimes \mathfrak m_y^2 $ that has an isolated singularity at $y$.
  We conclude that $(\widetilde L, \widetilde W)$ is non-defective.
\end{proof}

\subsection{Maximal variation for non-defective pairs}\label{sec:nondefective}

In this section, we prove part~\eqref{item:dual} of \autoref{thm:main}.
In fact, we prove a more general result (\autoref{thm:mainMain}).

As before, $X$ is a proper, normal variety of dimension $r$ over an algebraically closed field of characteristic zero.
\begin{theorem}
  \label{thm:mainMain}
  Let $(L, W)$ be a non-defective linear series on $X$ with $\dim W \geq r+2$.
  Then the projection-ramification map $\rho_{X,L,W}$ is generically finite onto its image.
\end{theorem}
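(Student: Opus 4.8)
The plan is to reduce the statement to the key special case treated in \autoref{prop:ordinarydoublepoint}, and then exhibit, over a general point of the image of $\rho_{X,L,W}$, a finite fiber. Write $\rho = \rho_{X,L,W}$, and let $(L, V) \subset (L, W)$ be a properly ramified projection, which exists by \autoref{cor:properlyramified}. The fiber of $\rho$ through the point $[V] \in \Gr(r+1, W)$ consists of those $(r+1)$-dimensional subspaces $V' \subset W$ with $R(L, V') = R(L, V)$ as divisors in $X$. So I must show that for a general such $V$ (general in the sense that $R(L,V)$ is a general point of the image), there are only finitely many such $V'$.

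The main idea is to look at the branch points of the map $p_{V,L} \from X \dashrightarrow \P V$ along the ramification divisor, and more precisely at the infinitesimal behavior there. First I would choose $V$ so that a general point $x$ of $R(L, V)$ is a \emph{simple} ramification point, i.e.\ the map $p_{V,L}$ looks locally like $(x_1, \dots, x_r) \mapsto (x_1^2, x_2, \dots, x_r)$; the existence of such $V$ is exactly the content of \autoref{prop:ordinarydoublepoint} applied after a suitable translation between ``$v(s)$ has an ordinary double point at $x$'' and ``$p_{V,L}$ is simply ramified along a divisor through $x$'' — this equivalence is the bridge between the conormal-variety picture of the previous subsection and the ramification picture, and I would spell it out carefully. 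Along the smooth locus of $R = R(L,V)$, the differential $dp_{V,L}$ has a one-dimensional kernel; this gives a line subbundle $N \subset T_X|_R$ (the ``kernel direction'') and its image, a line $\ell_x \subset T_{p(x)} \P V$ that the map is ``missing'' to first order. Now suppose $V'$ is another $(r+1)$-plane with $R(L, V') = R$. The differential of $p_{V',L}$ must also drop rank exactly along $R$, and I want to argue that, for $V$ general, the first-order data forces $V'$ to agree with $V$ along $R$ to high enough order that $V' = V$ — or at worst leaves finitely many choices.

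Concretely, the step I expect to be the main obstacle is extracting a genuine finiteness statement from this first-order geometry. One route: pull back to the blow-up. Let $\pi \from \widetilde X \to X$ be the blow-up of a general point $x \in X$, with exceptional $E$. By \autoref{prop:blowuppoint}, the induced series $(\widetilde L, \widetilde W)$ is again non-defective, with $\dim \widetilde W = \dim W - 1$. I would like to set up an induction on $\dim W$: the base case $\dim W = r+2$ is where $\P(\bigwedge^{r+1} W^*)$ is a projective line (or a point after the Plücker embedding), so $\rho$ maps to a space of dimension $\le 1$ and the statement is about whether $\rho$ is non-constant, which follows because the Plücker embedding of $\Gr(r+1, r+2)$ is an isomorphism onto a point — here one checks directly. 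For the inductive step, the difficulty is that the relation ``$R(L, V') = R(L, V)$'' on $X$ does not obviously pull back to the corresponding relation on $\widetilde X$ for the blown-up series; one must track how the ramification divisor transforms under blow-up (it picks up a fixed multiple of $E$) and verify that a fiber of $\rho_{\widetilde X, \widetilde L, \widetilde W}$ dominating a fiber of $\rho_X$ would contradict the inductive hypothesis. Managing this bookkeeping — and in particular ruling out that infinitely many $V'$ all collapse to a single $\widetilde V'$ after blow-up, which would require the $V'$ to agree away from $x$ — is the crux. An alternative to the blow-up induction, which I would pursue if the bookkeeping proves unwieldy, is to argue directly: fix $R$ general in the image, consider the incidence $\{(V', x) : x \in R,\ x \in \sing \text{ of the appropriate } v(s')\}$, use non-defectivity (via \autoref{prop:dimensionCriterion}) to control its dimension, and conclude by a dimension count that the projection to the space of $V'$ has finite fibers.
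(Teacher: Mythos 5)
Your overall strategy --- induct on $\dim W$ by blowing up a general point $x$ and invoking \autoref{prop:blowuppoint} --- is exactly the paper's, but the proposal has a genuine gap at precisely the step you flag as ``the crux,'' and that step is where all the work lies. The paper's resolution is a dichotomy you do not identify. One first chooses $V \subset W \otimes \mathfrak m_x$ general, so that (by \autoref{prop:ordinarydoublepoint} together with a local determinant computation, \autoref{lem:tangentconeram}) the ramification divisor $R(V)$ has an \emph{ordinary double point at the chosen centre $x$ itself} --- not merely simple ramification at a general point of $R(V)$, which is what you propose and which is not the relevant condition. Then, given a one-parameter family $V_c$ with $V_0 = V$, either (i) $V_c \subset W \otimes \mathfrak m_x$ for all $c$, in which case the fiber is controlled by the inductive hypothesis on $\widetilde X$ because $R(V_c)$ and the blown-up ramification divisor agree away from the exceptional divisor; or (ii) $V_c \not\subset W \otimes \mathfrak m_x$ for general $c$, in which case a second local computation (\autoref{lem:basepointfree}) shows that $R(V_c)$ is either disjoint from $x$ or \emph{smooth} at $x$, whereas $R(V_0)$ is singular there, so $R(V_c) \neq R(V_0)$. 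It is this singular-versus-smooth distinction at the single point $x$ that rules out the collapsing phenomenon you worry about; without it (or some substitute), the induction does not close. Your fallback ``direct dimension count'' on the incidence variety is not developed enough to assess, but note that \autoref{prop:dimensionCriterion} controls singular points of members $v(s)$, not of ramification divisors, so some translation would still be needed.

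Two smaller points. The base case of the induction is $\dim W = r+1$, where $\Gr(r+1,W)$ is a single point and there is nothing to prove; your proposed base case $\dim W = r+2$ is both unnecessary and misdescribed ($\Gr(r+1,r+2) \cong \P^{r+1}$ and its Pl\"ucker embedding is an isomorphism onto $\P\bigl(\bigwedge^{r+1}W^*\bigr) \cong \P^{r+1}$, not ``onto a point''). Also, the transformation of $R(V)$ under blow-up causes no bookkeeping trouble in the paper's argument precisely because case (i) only compares divisors away from the exceptional locus.
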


For the proof, we need two lemmas, which are essentially local computations.
Throughout, $X$, $L$, and $W$ are as in the statement of \autoref{thm:mainMain}.

\begin{lemma}\label{lem:tangentconeram}
  Let $x \in X$ be a general point and $V \subset W \otimes \mathfrak m_x$ a general $(r+1)$-dimensional subspace.
  Then $V$ is properly ramified, and the ramification divisor $R(V)$ has an ordinary double point singularity at $x$.
\end{lemma}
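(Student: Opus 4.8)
The plan is to work entirely in the complete local ring $\widehat{\O}_{X,x} \cong k\llbracket x_1, \dots, x_r\rrbracket$ and reduce the statement to a concrete calculation with power series. First I would fix a general point $x \in X$; by \autoref{prop:genericSeparateTangents}, $W$ separates tangent vectors at $x$, so $\dim(W \otimes \mathfrak m_x) = \dim W - (r+1) \geq 1$ and, since $\dim W \geq r+2$, in fact $\dim(W\otimes \mathfrak m_x) \geq 1$ is enough to guarantee that a general $(r+1)$-dimensional $V \subset W \otimes \mathfrak m_x$ makes sense once $\dim(W\otimes\mathfrak m_x)\geq r+1$; when $\dim W = r+2$ we instead take $V = W \otimes \mathfrak m_x$ itself. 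The first task is to check that such a general $V$ is properly ramified. Since every $s \in V$ vanishes at $x$ and $(L,V)$ still separates tangent vectors at a general point near $x$ (this uses that a general point is unconstrained and $W$ separates tangents generically), the map $p_{V,L}$ is dominant, so by \autoref{prop:proj} the projection $(L,V)$ is properly ramified. I would write this out carefully using that $K_x := \ker(W \otimes \O_X \to P(L))_x$ has the expected rank.

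Next, the heart of the argument: compute the tangent cone of $R(V)$ at $x$. After choosing a local trivialization of $L$ near $x$, each section $s \in V$ corresponds to a power series $s(x_1, \dots, x_r) \in \mathfrak m_x \widehat{\O}_{X,x}$, and the ramification divisor $R(V)$ is locally cut out by the determinant of the $(r+1)\times(r+1)$ matrix whose rows are $(s, \partial s/\partial x_1, \dots, \partial s/\partial x_r)$ for $s$ ranging over a basis $s_0, \dots, s_r$ of $V$. Since all $s_i \in \mathfrak m_x$, the first column of this matrix is in $\mathfrak m_x$, while the remaining entries $\partial s_i/\partial x_j$ have nonzero constant terms in general; expanding the determinant along the first column shows that the lowest-order term of $\det e$ is linear in the $s_i$-column entries, i.e. the tangent cone is governed by a single generic quadratic form. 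More precisely, choosing $V$ general means the linear parts of $s_0, \dots, s_r$ span (generically) an $r$-dimensional space while one combination, say $s_0$, can be taken with vanishing linear part, i.e. $s_0 \in \mathfrak m_x^2$; then by \autoref{prop:ordinarydoublepoint} (applied to the non-defective series, after a general choice) $v(s_0)$ can be arranged to have an ordinary double point at $x$, and a Laplace expansion of $\det e$ identifies its degree-$2$ part at $x$ with (a nonzero scalar multiple of) the Hessian quadric of $s_0$. That Hessian is a non-degenerate quadric cone, so $R(V)$ has an ordinary double point at $x$.

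The main obstacle I anticipate is the bookkeeping in the determinant expansion — one must choose the basis of $V$ and the local coordinates compatibly so that the contribution of the column $(\partial s_i/\partial x_j)_{i\geq 1}$ is invertible modulo $\mathfrak m_x$ (this is exactly proper ramification, ensuring the cofactor of the $s_0$-entry is a unit) while simultaneously arranging $s_0 \in \mathfrak m_x^2$ with non-degenerate Hessian. The genericity of $V$ inside $W \otimes \mathfrak m_x$ is what buys both conditions at once: the ``generic $V$'' lets us impose $s_0 \in W \otimes \mathfrak m_x^2$ (possible because $x$ is general and $W$ is non-defective, so $W \otimes \mathfrak m_x^2$ contains a section with isolated, hence by \autoref{prop:ordinarydoublepoint} ordinary double point, singularity at $x$), and lets us take the linear parts of $s_1, \dots, s_r$ to be a generic $r$-tuple of linear forms, hence linearly independent. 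Once these choices are set up, the computation that the tangent cone of $\det e$ at $x$ equals the (non-degenerate) tangent cone of $s_0$ is the same Laplace-expansion manipulation as in the proof of \autoref{prop:ordinarydoublepoint}, and I would present it in a couple of lines rather than grinding through indices.
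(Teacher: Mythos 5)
Your proposal is correct and follows essentially the same route as the paper: one uses \autoref{prop:genericSeparateTangents} and \autoref{prop:ordinarydoublepoint} to produce a basis of $V$ in which one element $t \in W \otimes \mathfrak m_x^2$ cuts out an ordinary double point at $x$ and the remaining $r$ elements span $\mathfrak m_x/\mathfrak m_x^2$, and then the local determinant of the evaluation map (which the paper computes as $t - \sum_i s_i \partial_i t$, whose quadratic part is $-t_2$ by the Euler identity) shows that $R(V)$ has the same non-degenerate tangent cone as $v(t)$, with proper ramification following because this local equation is nonzero. (Only a minor slip: your formula $\dim(W\otimes\mathfrak m_x) = \dim W - (r+1)$ should read $\dim(W\otimes\mathfrak m_x^2) = \dim W - (r+1)$, which does not affect the argument.)
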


\begin{proof}
  Using \autoref{prop:ordinarydoublepoint} and \autoref{prop:genericSeparateTangents}, we get a basis $(s_{1}, ..., s_{n}, t)$ of $V$ satisfying the following two conditions:
  \begin{enumerate}
      \item $s_{1}, \dots, s_{n}$ generate $L \otimes ({\mathfrak m}_{x}/{\mathfrak m}^{2}_{x})$, and
      \item $v(t)$ has an ordinary double point singularity at $x$.
    \end{enumerate}  

    Let $\widehat{\O}_{X,x}$ denote the completion of the local ring at $x \in X$ along its maximal ideal.  Upon trivializing $L$, we may regard $s_{i}$ and $t$ as elements of $\widehat{\O}_{X,x}$, and can also assume  $\widehat{\O}_{X,x} = k\llbracket s_{1}, \dots s_{n}\rrbracket$.
    In the bases $(s_1, \dots, s_n, t)$ for $V$ and $(1, s_1, \dots, s_n)$ for $P(L)$, the evaluation map 
\begin{align*}
  e\from V \otimes \widehat{\O}_{X,x} \to P(L) \otimes \widehat{\O}_{X,x}
\end{align*}
has the matrix
\begin{align}\label{matrix}
\begin{pmatrix}
  s_{1} & s_{2} & \dots & t \\
  1 & 0 & \dots & \partial_{1}t \\
  0 & 1 & \dots & \partial_{2}t \\
  \vdots & \vdots & \vdots & \vdots \\
  0 & 0 & \dots & \partial_{n}t
\end{pmatrix},
\end{align}
where $\partial_{i}$ denotes $\frac{\partial}{\partial s_{i}}$.
The determinant of the matrix \eqref{matrix}
\begin{align*}
  t - \sum_{i}s_{i}\partial_{i}t
\end{align*}
is an analytic local equation for the ramification divisor $R(V)$ near $x$.
By applying the Euler identity to the degree 2 part of $t$ (expressed as a power series in the $s_i$), we see that $R(V)$ shares the same tangent cone as $v(t)$ at $x$.
The proposition follows.
\end{proof}

\begin{lemma}\label{lem:basepointfree}
  Let $x \in X$ be a general point and $V \subset W$ an $(r+1)$-dimensional subspace with a basis $(u, a_1,\dots, a_{r-1},b)$ where
  \begin{enumerate}
    \item $u$ does not vanish at $x$,
    \item $a_1, \dots , a_{r-1}$ vanish at $x$, and reduce to linearly independent elements of $L \otimes ({\mathfrak m}_{x}/{\mathfrak m}^{2}_{x})$, and
    \item $v(b)$ has an ordinary double point at $x$. 
  \end{enumerate}
  Then $R(V)$ contains $x$ and is smooth at $x$.
\end{lemma}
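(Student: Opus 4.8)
The plan is to run the same kind of local determinant computation as in \autoref{lem:tangentconeram}, but now with a basis adapted to the hypotheses of \autoref{lem:basepointfree}. First I would pass to the completed local ring $\widehat{\O}_{X,x} \cong k\llbracket s_1, \dots, s_r\rrbracket$ and trivialize $L$ near $x$, so that $u, a_1, \dots, a_{r-1}, b$ become power series. By hypothesis (1), $u$ is a unit, so after scaling we may assume $u = 1 + (\text{higher order})$; by hypothesis (2), after a linear change of coordinates we may take $a_i = s_i$ for $i = 1, \dots, r-1$ modulo $\mathfrak m_x^2$; and by hypothesis (3), $b \in \mathfrak m_x^2$ with non-degenerate quadratic part. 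I would then write down the matrix of the evaluation map $e \from V \otimes \widehat{\O}_{X,x} \to P(L) \otimes \widehat{\O}_{X,x}$ in the bases $(u, a_1, \dots, a_{r-1}, b)$ for $V$ and $(1, s_1, \dots, s_r)$ for $P(L)$.

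The key point is that this is an $(r+1) \times (r+1)$ matrix whose first $r$ columns come from $u, a_1, \dots, a_{r-1}$ and last column comes from $b$. Because $u$ is a unit and the $a_i$'s are "most of" a coordinate system, the submatrix formed by the first $r$ columns and the rows indexed by $(1, s_1, \dots, s_{r-1})$ is, modulo $\mathfrak m_x$, the identity (up to units); the crucial $r$-th row (the $\partial_r$-row) contains $\partial_r u, \partial_r a_1, \dots, \partial_r a_{r-1}, \partial_r b$. Expanding the determinant, I expect the local equation of $R(V)$ near $x$ to have the shape
\[
  \partial_r b + (\text{terms in } \mathfrak m_x^2),
\]
or more precisely a function whose linear part is a nonzero multiple of $s_r$ (coming from $\partial_r$ of the quadratic part of $b$, which is a nonzero linear form that we can arrange to involve $s_r$ after a further coordinate change compatible with the $a_i$). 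Concretely: since the quadratic part $q$ of $b$ is a non-degenerate quadratic form, $\partial_r q$ is a nonzero linear form, hence $R(V)$ is cut out near $x$ by a power series with nonzero linear term, so $x \in R(V)$ and $R(V)$ is smooth at $x$. To make the bookkeeping clean I would first establish proper ramification — this follows because the determinant is not identically zero, which in turn follows because at a generic nearby point the $a_i$ and $u$ already span a rank-$r$ image and $b$ contributes the missing direction generically.

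The main obstacle I anticipate is organizing the determinant expansion so that the vanishing of the constant term and the non-vanishing of the linear term are both transparent, without getting lost in the higher-order corrections contributed by $u$ and the $a_i$ not being exactly $1, s_1, \dots, s_{r-1}$. The clean way around this is to note that these corrections all lie in $\mathfrak m_x$, so they only affect the determinant modulo $\mathfrak m_x^2$ through products that land in $\mathfrak m_x^2$ anyway — hence the linear part of the local equation of $R(V)$ is computed purely from the "leading" matrix with entries $1$, $s_i$, $\partial_j(\text{leading terms})$, reducing to exactly the computation that $R(V)$ and $v(b)$ have proportional linear (not quadratic) behavior in the relevant direction. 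An alternative, perhaps slicker, route is to observe that the basis here is a degeneration of the basis in \autoref{lem:tangentconeram}: replacing one of the $s_i$'s by a unit $u$ (i.e. "filling in the base point") should drop the order of vanishing of $R(V)$ at $x$ from $2$ to $1$, which is precisely the smoothness claim. Either way, the computation is genuinely local and elementary once the coordinates are set up correctly, so I would keep the exposition parallel to \autoref{lem:tangentconeram} and simply highlight where the unit $u$ changes the outcome.
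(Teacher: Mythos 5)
Your proposal is correct and follows essentially the same route as the paper: both reduce to the local determinant of the evaluation matrix in coordinates adapted to the basis, and both conclude smoothness from the fact that the only contribution to the linear part of the determinant is a unit times $\partial_z b$ (your $\partial_r b$), which is a nonzero linear form because the tangent cone of $b$ is a non-degenerate quadric. The only difference is cosmetic — the paper completes $(a_1,\dots,a_{r-1})$ to an exact coordinate system $(a_1,\dots,a_{r-1},z)$ and gets the closed form $\bar u\,\partial_z b \pm \partial_z u\,\bar b$, whereas you work with $a_i \equiv s_i \pmod{\mathfrak m_x^2}$ and track the expansion modulo $\mathfrak m_x^2$; note only that the linear part is a unit multiple of $\partial_r q$, not literally of $s_r$, which your final formulation already accommodates.
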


\begin{proof}
  That $R(V)$ contains $x$ is clear since $V \otimes \mathfrak{m}^{2}_{x} \neq 0$.

  For smoothness, we again work in the completion $\widehat{\O}_{X,x}$.
  After trivializing $L$, we assume $u, a_{1}, ..., b$ are elements of $\widehat{\O}_{X,x}$.
  We choose an element $z \in \widehat{\O}_{X,x}$ such that $(a_1, \dots, a_{r-1}, z)$ forms a system of coordinates, that is $\widehat{\O}_{X,x} \cong k\llbracket a_{1}, \dots , a_{r-1}, z \rrbracket$.
  With respect to the given basis of $V$ and the basis $1, a_1, \dots, a_{r-1}, z$ for $P(L)$, the evaluation map
  \begin{align*}
  e\from V \otimes \widehat{\O}_{X,x} \to P(L) \otimes \widehat{\O}_{X,x}
  \end{align*}
  has the matrix
\begin{align}\label{matrix2}
\begin{pmatrix}
  u & a_{1} & a_{2} & \dots & b \\
  \partial_{1}u & 1 & 0 & \dots & \partial_{1}b \\
  \partial_{2}u & 0 & 1 & \dots & \partial_{2}b \\
  \vdots & \vdots & \vdots & \vdots \\
  \partial_{z}u  & 0 & 0 & \dots & \partial_{z}b
\end{pmatrix}
\end{align}
The determinant of the matrix \eqref{matrix2} is the analytic local equation for $R(V)$.
It is given by
\begin{align*}
   \bar{u} \cdot \partial_{z}b \pm \partial_{z}u \cdot \bar{b},
 \end{align*} 
 where, for $r \in \widehat{\O}_{X,x}$ we set
 \[\bar{r} = r - a_{1}\partial_{1}r - a_{2}\partial_{2}r - \dots - z \partial_{z} r.\]
 Since $b \in {\mathfrak m}^{2}_{x}$, we get that $\bar{b} \in {\mathfrak m}^{2}_{x}$, and so $\partial_{z}b \in {\mathfrak m}_{x}$.
 Furthermore, since the tangent cone of $b$ is a non-degenerate quadric, we also get that $\partial_z b \not \in \mathfrak m_x^2$.
 Since $\overline{u}$ is a unit, we see that the tangent cone of $R(V)$ at $x$ is the hyperplane cut out by $\partial_z b \in \mathfrak m_x/\mathfrak m_x^2$.
 So $R(V)$ is smooth at $x$.
\end{proof}

We now have all the tools for the proof of \autoref{thm:mainMain}. 
\begin{proof}[Proof of \autoref{thm:mainMain}]
  We induct on $\dim W$.
  The base case $\dim W = r+1$ is clear.

  We now do the induction step.
  Suppose $\dim W \geq r+2$.
  Choose a general point $x \in X$ such that the induced linear series $(\widetilde L, \widetilde W)$ on $\widetilde X = \Bl_x X$ is non-defective as in \autoref{prop:blowuppoint}.
  Choose a general $(r+1)$-dimensional subspace $V \subset W \otimes \mathfrak m_x = \widetilde W$ that satisfies the hypotheses of \autoref{lem:tangentconeram}.
  By the induction hypothesis, $V$ considered as a projection of $(\widetilde L, \widetilde W)$ is an isolated point in the projection-ramification map for $\widetilde X$.
  We now show that it is also an isolated point in the projection-ramification map for $X$.

  Let $(C, 0)$ be a pointed smooth curve and $V \subset W \otimes \O_C$ a sub-bundle of rank $(r+1)$ such that
  \begin{enumerate}
  \item $V_{0} = V$, and 
  \item $V_{c} \neq V_{0}$ for $c \in C \setminus \{0\}$.
  \end{enumerate}
  We must show that $R(V_c) \neq R(V)$ for a general $c \in C$.

  Suppose $V_c \subset W \otimes \mathfrak m_x = \widetilde W$ for all $c \in C$.
  Denote by $\widetilde R(V_c)$ the ramification divisor of $V_c$ considered as a projection of $\widetilde X$.
  Since $V = V_0$ is an isolated point in the projection-ramification map for $\widetilde X$, we know that $\widetilde R(V_c) \neq \widetilde R(V_0)$ for a general $c \in C$.
  Clearly, $R(V_c)$ and $\widetilde R(V_c)$ agree away from the exceptional divisor, and hence we conclude that $R(V_c) \neq R(V_0)$ for a general $c \in C$.

  On the other hand, suppose $V_c \not \subset W \otimes \mathfrak m_x = \widetilde W$ for a general $c \in C$.
  Consider the evaluation maps
  \[ e_c \from V_c \to L / \mathfrak m_x^2 L \]
  between an $(r+1)$-dimensional source and $(r+1)$-dimensional target.
  Since $V = V_0$ satisfies the hypotheses of \autoref{lem:tangentconeram}, $\rk e_0 = r$.
  Therefore, by semi-continuity, $\rk e_c \geq r$ for all $c \in C$.
  If $\rk e_c = (r+1)$ for a general $c \in C$, then $x \not \in R(V_c)$, and hence $R(V_c) \neq R(V)$.
  Otherwise, by shrinking $C$ if necessary, assume $\rk e_c = r$ for all $c \in C$.
  In other words, $\dim (V_c \otimes \mathfrak m_x^2) = 1$ for all $c \in C$.
  Let $b_c \in V_c \otimes \mathfrak m_x^2$ be a non-zero element.
  Since $v(b_0)$ has an ordinary double-point singularity at $x$, so does $v(b_c)$.
  Also, since $\rk (e_c) = r$ and $V_c \not \in W \otimes \mathfrak m_x$ for a general $c$, there exists $u_c \in V_c$ not vanishing at $x$, and a set of $(r-1)$ other elements that vanish at $x$ but reduce to linearly independent elements modulo $\mathfrak m_x^2$.
  That is, $V_c$ satisfies the hypotheses of \autoref{lem:basepointfree} for a general $c \in C$.
  But \autoref{lem:basepointfree} implies that $R(V_c)$ is smooth at $x$.
  Since $R(V_0)$ is singular at $x$, we conclude that $R(V_0) \neq R(V_c)$.
  The induction step is now complete.
\end{proof}

We immediately get part~\eqref{item:dual} of \autoref{thm:main}.
\begin{corollary}
  \label{cor:maintheorem} Let $X \subset \P^{n}$ be a non-degenerate projective variety such that the dual variety $X^{*} \subset \P^{n*}$ is a hypersurface. Then $\rho_{X}$ is generically finite onto its image.
\end{corollary}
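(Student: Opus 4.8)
The plan is to deduce \autoref{cor:maintheorem} as a direct consequence of \autoref{thm:mainMain}, simply by checking that the hypotheses match. First I would set $L = \O_X(1)$ and let $W \subset H^0(X, L)$ be the image of $H^0(\P^n, \O(1))$; since $X$ is non-degenerate in $\P^n$, the restriction map is injective, so $\dim W = n+1$. Because $X$ is non-degenerate, $n \geq r+1$, and since $X^* \subset {\P^n}^*$ is a hypersurface (hence in particular $X$ is not a linear subspace), we in fact have $n \geq r+2$; thus $\dim W \geq r+2$, which is the numerical hypothesis of \autoref{thm:mainMain}.

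Next I would invoke \autoref{prop:non-deg-dual}, which says precisely that for this $(L, W)$, non-defectivity of the linear series is equivalent to $X^*$ being a hypersurface. Since the latter holds by assumption, $(L, W)$ is non-defective. At this point all the hypotheses of \autoref{thm:mainMain} are verified: $X$ is proper, normal (we should note that $X \subset \P^n$ smooth or normal is assumed — indeed in the statement of \autoref{thm:main} normality is hypothesized, and a variety with a hypersurface dual in the relevant sense here is smooth in codimension one, but to be safe I would just carry over the normality hypothesis as is done throughout \autoref{sec:nondefective}), and $(L, W)$ is a non-defective linear series with $\dim W \geq r+2$.

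Finally I would observe that $\rho_X$ was \emph{defined} (see \autoref{def:ProjectionRamification} and the convention immediately following it) to be exactly $\rho_{X, L, W}$ for this choice of $L$ and $W$. Therefore \autoref{thm:mainMain} applies verbatim and gives that $\rho_X = \rho_{X,L,W}$ is generically finite onto its image, which is the assertion of the corollary. There is essentially no obstacle here — the entire content has been pushed into \autoref{thm:mainMain} and \autoref{prop:non-deg-dual}; the only thing to be a little careful about is the bookkeeping that $\dim W \geq r+2$ (rather than merely $r+1$) when $X^*$ is a hypersurface, which rules out the degenerate case where $X$ is itself a linear space and $\rho$ would be a map from a point.
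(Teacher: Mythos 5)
Your proposal is correct and follows exactly the paper's own route: apply \autoref{prop:non-deg-dual} to conclude that the hyperplane linear series $(L,W)$ is non-defective, then invoke \autoref{thm:mainMain}. One tiny slip in your bookkeeping: the claim $n \geq r+2$ is false for a quadric hypersurface (which is self-dual with $n = r+1$), but all that is needed is $n \geq r+1$, i.e.\ $X \neq \P^n$, which already gives $\dim W = n+1 \geq r+2$, so the conclusion stands.
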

\begin{proof}
  By \autoref{prop:non-deg-dual} the linear series on $X$ that gives the embedding $X \subset \P^n$ is non-defective.
  Now apply \autoref{thm:mainMain}.
\end{proof}

\begin{corollary}\label{cor:lowdim}
  Let $X \subset \P^n$ be a non-degenerate smooth curve or a surface.
  Then $\rho_X$ is generically finite onto its image.
\end{corollary}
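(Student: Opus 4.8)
The plan is to deduce \autoref{cor:lowdim} from \autoref{cor:maintheorem} by verifying that for a smooth curve or surface, the dual variety is always a hypersurface, and then invoking the previous corollary. Concretely, I would argue as follows: for a non-degenerate smooth variety $X \subset \P^n$, the dual $X^*$ fails to be a hypersurface precisely when $X$ is \emph{dual defective}, and the defect is controlled by the dimension of the contact locus of a general tangent hyperplane. The key input is the Landman-type parity/contact bound: if $X^* \subsetneq (\P^n)^*$ has codimension $k \geq 2$, then through a general point of $X$ the contact locus of a general tangent hyperplane is a linear space $\P^{k-1} \subset X$, and in particular $X$ contains a positive-dimensional family of linear spaces of dimension $\geq 1$. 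For $X$ a curve this is already absurd (a curve can contain no $\P^1$), so $X^*$ is a hypersurface. For $X$ a surface, $k \geq 2$ would force $X$ to be covered by lines with the contact locus being a $\P^1$, and a short classification (or the Ein--Zak analysis of dual defective surfaces: the only smooth dual defective varieties of dimension $\leq 2$ are $\P^1 \subset \P^1$ and $\P^2 \subset \P^2$, i.e. none that are non-degenerate surfaces in $\P^n$ with $n \geq 3$) rules this out.

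The cleanest route, and the one I would actually write, avoids invoking the full structure theory and instead uses the conormal variety directly, paralleling \autoref{prop:non-deg-dual}. Set $L = \O_X(1)$ and let $W \subset H^0(X,L)$ be the image of $H^0(\P^n, \O(1))$, so $(L,W)$ is very ample and hence the evaluation map $e \from W \otimes \O_X \to P(L)$ is surjective; thus the conormal variety $P_{L,W}$ is irreducible of dimension $n-1$, and $X^* = \sigma(P_{L,W}) \subset |W| = (\P^n)^*$. By \autoref{prop:dimensionCriterion}, $(L,W)$ is non-defective if and only if $\sigma \from P_{L,W} \to |W|$ is generically finite, i.e. iff $\dim X^* = n-1$. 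So it suffices to show $\dim X^* = n-1$ whenever $X$ is a smooth curve or surface. By reflexivity (biduality), a general tangent hyperplane $H \in X^*$ is tangent to $X$ along its contact locus, which is a linear subspace of $X$ of dimension exactly $n-1-\dim X^*$; when $X$ is a curve this dimension must be $0$, forcing $\dim X^* = n-1$, and when $X$ is a surface the contact locus has dimension $\leq 1$, so $\dim X^* \geq n-2$, and equality $\dim X^* = n-2$ would mean $X$ is covered by a one-dimensional family of lines along each of which a hyperplane is tangent — one then checks directly that such a surface is a cone or has $X^*$ of the wrong dimension, contradicting smoothness and non-degeneracy. Once $\dim X^* = n-1$ is established, \autoref{prop:non-deg-dual} gives non-defectivity and \autoref{cor:maintheorem} (equivalently \autoref{thm:mainMain}, using $\dim W = n+1 \geq r+2$ since $X$ is non-degenerate of dimension $r \leq 2$ and not a hypersurface or, if it is a hypersurface, handling that case by hand or noting $n+1 \geq r+2$ still) finishes the proof.

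The one genuinely delicate point — the main obstacle — is ruling out the dual-defective surface case, $\dim X^* = n-2$. The parity theorem for dual defect (defect $\equiv \dim X \pmod 2$ for $X$ with at worst the generic tangency behaviour, Ein's theorem) immediately kills this: for a surface $X$ the defect is $0$ or $2$, and defect $2$ would mean $X^*$ is a point, forcing $X$ to be a linear space, hence $X = \P^2 \subset \P^2$, which is degenerate. So a non-degenerate smooth surface has defect $0$. I would cite Ein's parity theorem (or Zak's book on tangents and secants, or Tevelev's monograph) for this; alternatively for surfaces one can give an elementary argument: the Segre classes / the fact that the second Chern class of $X$ is computed by the formula $\deg X^* = \sum (\ldots)$ and is positive for a smooth surface shows $X^* \neq \emptyset$ has the expected dimension. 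I would present the parity-theorem argument as the primary one and mention the ad hoc alternative only in a remark if space permits.

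\begin{proof}
  Set $L = \O_X(1)$ and let $W \subset H^0(X, L)$ be the image of $H^0(\P^n, \O(1))$.
  By \autoref{prop:non-deg-dual}, it suffices to show that the dual variety $X^* \subset (\P^n)^*$ is a hypersurface, and then apply \autoref{cor:maintheorem}.
  If $X$ is a hypersurface (the case $r = n-1$, which can only happen when $X$ is a plane conic or an irreducible plane curve, or when $n = 2$), then $X^*$ is again a hypersurface by classical projective duality, so we may assume $\codim X \geq 2$, and in particular $\dim W = n+1 \geq r+2$.

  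Since $(L, W)$ is very ample, the evaluation map $e \from W \otimes \O_X \to P(L)$ is surjective, so as in the proof of \autoref{prop:non-deg-dual}, the conormal variety $P_{L,W}$ is irreducible of dimension $n-1$, and $X^* = \sigma(P_{L,W})$ where $\sigma \from P_{L,W} \to |W| = (\P^n)^*$ is the projection.
  Thus $\dim X^* \leq n-1$, with equality if and only if $\sigma$ is generically finite onto its image, and we must rule out $\dim X^* \leq n-2$.

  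By biduality (reflexivity in characteristic zero), for a general point $H \in X^*$, the contact locus of $H$ with $X$ is a linear subspace of $X$ of dimension $n - 1 - \dim X^*$; call this number $\delta \geq 0$, the dual defect.
  If $X$ is a curve ($r = 1$), then $X$ contains no positive-dimensional linear subspace other than $X$ itself, which is excluded since $\codim X \geq 2$; hence $\delta = 0$ and $\dim X^* = n-1$.
  If $X$ is a surface ($r = 2$), then by Ein's parity theorem the defect satisfies $\delta \equiv r \pmod 2$, so $\delta \in \{0, 2\}$.
  If $\delta = 2$, then the general contact locus is a $\P^2 \subset X$, forcing $X = \P^2$ linearly embedded, contradicting non-degeneracy in $\P^n$ with $n \geq 3$.
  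Hence $\delta = 0$ and $\dim X^* = n-1$ in this case as well.

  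In either case $X^* \subset (\P^n)^*$ is a hypersurface, so by \autoref{prop:non-deg-dual} the linear series $(L, W)$ is non-defective, and \autoref{cor:maintheorem} gives that $\rho_X$ is generically finite onto its image.
\end{proof}
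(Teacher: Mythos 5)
Your proof is correct and follows the same route as the paper: reduce to \autoref{cor:maintheorem} by showing that a non-degenerate smooth curve or surface has a divisorial dual. The paper simply asserts that fact in one line, whereas you justify it (via reflexivity, linearity of the generic contact locus, and the Landman--Ein parity theorem); the only quibbles are cosmetic — e.g.\ the parenthetical describing the hypersurface case is garbled, and a line in $\P^n$ is ruled out by non-degeneracy rather than by $\codim X \geq 2$ — and do not affect the argument.
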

\begin{proof}
  Curves and surfaces have divisorial duals, so \autoref{cor:maintheorem} applies.
\end{proof}

\section{Projection-ramification for varieties of minimal degree}\label{sec:minimaldegree}
In this section, we prove \autoref{thm:minimaldegree}, which relates varieties of minimal degree and the projection-ramification map.
We then prove \autoref{thm:counterexamples} by constructing examples of rational scrolls where maximal variation fails.
Finally, we obtain an alternate and more explicit description of the projection-ramification map for scrolls, which we use repeatedly.

The following is an easy application of the Kodaira vanishing theorem.
\begin{proposition}\label{lem:kymh}
  Let $X \subset \P^n$ be a non-degenerate, smooth, projective, variety of dimension $r \geq 1$ over a field of characteristic zero.
  For all $m \geq r$, we have the inequality
  \begin{equation}\label{eqn:KYmH}
    {m \choose r}(n-r) + {{m-1} \choose {r}}\leq h^0(X, K_X + mH).
  \end{equation}
  If equality holds for any $m \geq r$, then $X$ is a variety of minimal degree, that is $\deg X = n-r+1$.
  Conversely, for a variety of minimal degree, equality holds for all $m \geq r$.
\end{proposition}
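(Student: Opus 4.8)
The plan is to reduce $h^0(X,K_X+mH)$ to the Hilbert polynomial of $X$ and then to induct down to curves by general hyperplane sections. Since $mH$ is ample for $m\ge1$, Kodaira vanishing gives $h^i(X,K_X+mH)=0$ for $i>0$, so by Serre duality
\[h^0(X,K_X+mH)=\chi(X,K_X+mH)=(-1)^r\chi(X,\O_X(-m))=(-1)^rP_X(-m),\qquad m\ge1,\]
where $P_X$ is the Hilbert polynomial of $X\subset\P^n$. A short computation with binomial coefficients (the substitution $m\mapsto-m$ together with the overall sign $(-1)^r$) identifies the right-hand side of \eqref{eqn:KYmH}, namely $q_X(m):=\binom{m}{r}(n-r)+\binom{m-1}{r}$, with $(-1)^rQ(-m)$, where $Q$ is the Hilbert polynomial of a variety of dimension $r$ and (minimal) degree $n-r+1$. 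The difference $P_X-Q$ has degree $\le r$ with leading coefficient $(\deg X-(n-r+1))/r!$, which is $\ge 0$ because every non-degenerate irreducible variety satisfies $\deg\ge\operatorname{codim}+1$; so the content of the proposition is that $(-1)^r(P_X-Q)(-m)\ge 0$ for $m\ge r$, with the equality cases governed by whether this leading coefficient vanishes.

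I would prove the inequality by induction on $r=\dim X$, allowing the ambient dimension to vary. For $r=1$, Riemann--Roch on $C$ gives $h^0(C,K_C+mH)=\deg(K_C+mH)+1-g=md+g-1$ for $m\ge1$ (the $h^1$ term $h^0(C,\O_C(-m))$ vanishes), and since $d=\deg C\ge n$ and $g\ge0$ this is $\ge mn-1=q_C(m)$. For $r\ge2$, let $Y=X\cap H'$ be a general hyperplane section: by Bertini it is smooth, irreducible and non-degenerate in $\P^{n-1}$, of the same degree and codimension as $X$. The restriction sequence for $Y\subset X$ twisted by $K_X+mH$, together with adjunction $K_Y=(K_X+H)|_Y$, reads
\[0\to\O_X(K_X+(m-1)H)\to\O_X(K_X+mH)\to\O_Y(K_Y+(m-1)H|_Y)\to0,\]
and Kodaira vanishing $h^1(X,K_X+(m-1)H)=0$ for $m\ge2$ yields the recursion
\[h^0(X,K_X+mH)=h^0(X,K_X+(m-1)H)+h^0(Y,K_Y+(m-1)H|_Y),\qquad m\ge2.\]
By Pascal's rule $q_X$ obeys the matching identity $q_X(m)=q_X(m-1)+q_Y(m-1)$, where $q_Y$ is the corresponding quantity for $Y$ (dimension $r-1$, same codimension $n-r$). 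Feeding in the inductive bound for $Y$, one gets the case $m=r$ from $h^0(X,K_X+rH)\ge 0+h^0(Y,K_Y+(r-1)H|_Y)\ge q_Y(r-1)=n-r=q_X(r)$, and then all $m>r$ by the recursion and a secondary induction on $m$.

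For equality, suppose $h^0(X,K_X+m_0H)=q_X(m_0)$ for some $m_0\ge r$. If $r=1$ this says $m_0(d-n)+g=0$ with both summands $\ge0$, so $d=n$ and $g=0$, i.e.\ $C$ is a rational normal curve; in particular $\deg C$ is minimal. If $r\ge2$, the recursion above is an \emph{equality}, so equality for $X$ at $m_0$ forces equality for $Y$ at $m_0-1\ge r-1=\dim Y$, whence $\deg Y$ is minimal by induction and $\deg X=\deg Y=(n-1)-(r-1)+1=n-r+1$ is minimal too. Conversely, a variety of minimal degree is arithmetically Cohen--Macaulay with Hilbert polynomial exactly $Q$ (by the classification of smooth varieties of minimal degree, or the Eagon--Northcott resolution), so $h^0(X,K_X+mH)=(-1)^rP_X(-m)=(-1)^rQ(-m)=q_X(m)$ for all $m\ge1$. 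The main external inputs are the classical bound $\deg\ge\operatorname{codim}+1$ for non-degenerate curves and the structure of minimal-degree varieties; the only real subtlety is arranging the two inductions so that the bound is first established at the bottom value $m=r$ from the hyperplane-section data and only then propagated upward, since the recursion relates $m$ to $m-1$ and holds solely for $m\ge2$ (note also that one cannot shortcut the forward equality direction using only the sign of the leading coefficient of $P_X-Q$).
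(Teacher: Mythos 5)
Your proof is correct and follows essentially the same route as the paper: induction on dimension via a general hyperplane section, adjunction, Kodaira vanishing to split the restriction sequence, Pascal's rule on the binomial side, and Riemann--Roch with $\deg C\geq n$ in the base case, with the equality analysis propagating down to the rational normal curve. The only (minor) divergences are your Hilbert-polynomial framing and your use of projective normality of minimal-degree varieties for the converse, where the paper instead just reruns the induction.
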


\begin{proof}
  Without loss of generality, $X$ is embedded by the complete linear series.
  Indeed, passing to the complete linear series only increases the left side of the desired inequality, and does not change the right side.
  
  We first prove the inequality \eqref{eqn:KYmH}, using a double induction--first on $r$, and then on $m$.
  For the base case $r = 1$, Riemann--Roch gives
  \begin{equation}\label{eqn:r1}
    h^0(X, K_X + mH) = g_X - 1 + mn,
  \end{equation}
  from which \eqref{eqn:KYmH} follows for all $m$.

  Assume that \eqref{eqn:KYmH} holds for varieties of dimension $(r-1)$ and all $m \geq r-1$.
  Let $D \subset X$ be a general member of the linear series $|H|$.
  By Bertini's theorem, $D$ is a smooth variety.
  The adjunction formula $K_D = (K_X + H)|_D$ yields the exact sequence
  \begin{equation}\label{eqn:mainexact}
    0 \to \O_X(K_X + (m-1)H) \to \O_X(K_X + mH) \to \O_D(K_D + (m-1)H) \to 0.
  \end{equation}
  Note that, by the Kodaira vanishing theorem, we have $h^1(K_X + nH) = 0$ for all $n > 1$; we use this repeatedly, without further comment.
  For $m = r$, the long exact sequence in cohomology associated to \eqref{eqn:mainexact} gives
  \[ h^0(K_D + (r-1)H) \leq h^0(K_X + rH).\]
  By applying the induction hypothesis to $D$, we have
  \begin{equation}
    n-r \leq h^0(K_D + (r-1)H)
  \end{equation}
  Therefore, we conclude that
  \begin{equation}
    n-r \leq h^0(K_X + rH).
  \end{equation}
  Let $m > r$, and assume that \eqref{eqn:KYmH} holds for $X$ for $m-1$.
  The long exact sequence in cohomology associated to \eqref{eqn:mainexact} gives
  \begin{equation}\label{eqn:add}
    h^0(K_X + (m-1)H) + h^0(K_D + (m-1)H) = h^0(K_X + mH).
  \end{equation}
  By applying the induction hypothesis to $m-1$, we get
  \begin{align*}
    h^0(K_X + (m-1)H) &+ h^0(K_D + (m-1)H)\\
                      &\geq{{m-1} \choose r}(n-r) + {{m-2} \choose {r}} + {{m-1} \choose {r-1}}(n-r) + {{m-2} \choose r-1} \\
                      &={m \choose r} (n-r) + {{m-1} \choose r}.
  \end{align*}
  Together with \eqref{eqn:add}, we conclude 
  \begin{equation}
    {m \choose r} (n-r) + {{m-1} \choose r} \leq h^0(K_X + mH), 
  \end{equation}
  which is \eqref{eqn:KYmH} for $m$.
  The proof of the inequality is thus complete.

  We now examine when equality holds in \eqref{eqn:KYmH}.
  For $r = 1$, the equation \eqref{eqn:r1} shows that equality holds for some $m$ if and only if $g_X = 0$, that is $X \subset \P^n$ is a rational normal curve, and in this case, equality holds for all $m$.
  Furthermore, we observe in the inductive proof that if equality holds for an $X$ of dimension $r > 1$ and some $m$, then it must hold for the hyperplane slice $D$ and $(m-1)$.
  Again, by an induction on $r$, we conclude that $\deg X = n-r+1$, that is, $X \subset \P^n$ is a variety of minimal degree.

  Finally, for $X \subset \P^n$ of minimal degree, induction on $r$ shows that equality holds in \eqref{eqn:KYmH} for all $m$.
\end{proof}

As a consequence, we immediately deduce \autoref{thm:minimaldegree}.
\begin{theorem}[\autoref{thm:minimaldegree}]
  \label{thm:actualminimaldegree}
  Let $X \subset \P^n$ be a smooth, non-degenerate projective variety of dimension $r \geq 1$ over a field of characteristic zero.
  We have the inequality
  \[ \dim \Gr(n-r, n+1) \leq \dim |K_X + (r+1)H|,\]
  where equality holds if and only if $X$ is a variety of minimal degree, that is $\deg X = n-r+1$.
\end{theorem}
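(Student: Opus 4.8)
The plan is to deduce \autoref{thm:actualminimaldegree} directly from \autoref{lem:kymh} by a purely numerical comparison, applied with $m = r+1$. First I would compute the left-hand side: $\dim \Gr(n-r, n+1) = (n-r)(r+1)$, using the standard formula $\dim \Gr(k, N) = k(N-k)$ with $k = n-r$ and $N = n+1$. For the right-hand side, $\dim |K_X + (r+1)H| = h^0(X, K_X + (r+1)H) - 1$. So the asserted inequality $\dim \Gr(n-r,n+1) \le \dim|K_X + (r+1)H|$ is equivalent to
\[
  (n-r)(r+1) + 1 \le h^0(X, K_X + (r+1)H).
\]

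Next I would match this against the inequality \eqref{eqn:KYmH} of \autoref{lem:kymh} specialized to $m = r+1$, whose left-hand side is $\binom{r+1}{r}(n-r) + \binom{r}{r} = (r+1)(n-r) + 1$. This is exactly the quantity above, so \eqref{eqn:KYmH} at $m = r+1$ reads precisely $(n-r)(r+1) + 1 \le h^0(X, K_X + (r+1)H)$, giving the desired inequality. (One should note $r \ge 1$ ensures $m = r+1 \ge r$, so \autoref{lem:kymh} applies.) For the equality statement: if $X$ has minimal degree then, by the last sentence of \autoref{lem:kymh}, equality holds in \eqref{eqn:KYmH} for all $m \ge r$, in particular for $m = r+1$, hence equality holds in the dimension inequality. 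Conversely, if equality holds in the dimension inequality, then equality holds in \eqref{eqn:KYmH} for $m = r+1 \ge r$, and the equality clause of \autoref{lem:kymh} forces $\deg X = n-r+1$.

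I do not expect any serious obstacle here: the theorem is essentially a repackaging of \autoref{lem:kymh} with the specific substitution $m = r+1$ and the translation between $h^0$ and projective dimension, together with the computation of $\dim \Gr(n-r, n+1)$. The only minor point to be careful about is bookkeeping with the binomial coefficients and the "$+1$" from passing between a vector space and its projectivization, and checking the degenerate index cases (e.g. that $\binom{r}{r} = 1$ contributes the correct constant). If desired, one could also record explicitly that $\Gr(n-r, n+1) \cong \Gr(r+1, n+1)$ matches the source $\Gr(r+1, W)$ of the projection-ramification map in \autoref{def:ProjectionRamification} with $\dim W = n+1$, and that $|K_X + (r+1)H|$ with $H = \O_X(1)$ is exactly the target $\P H^0(X, K_X \otimes L^{r+1})^*$ of $\rho_X$, so that the statement is indeed the assertion that source and target of $\rho_X$ have equal dimension precisely for varieties of minimal degree.
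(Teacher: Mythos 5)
Your proposal is correct and is exactly the paper's argument: the paper's proof of \autoref{thm:actualminimaldegree} is a one-line application of \autoref{lem:kymh} with $m = r+1$, and your explicit bookkeeping of $\dim\Gr(n-r,n+1) = (r+1)(n-r)$ against $\binom{r+1}{r}(n-r) + \binom{r}{r}$ and the passage from $h^0$ to projective dimension fills in precisely what that one line leaves implicit.
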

\begin{proof}
  Apply \autoref{lem:kymh} with $m = r+1$.
\end{proof}

\subsection{Projection-ramification for scrolls}\label{sec:prscrolls}
\autoref{thm:minimaldegree} motivates a deeper investigation of the projection-ramification map for varieties of minimal degree.
Indeed, for $X \subset \P^n$ of minimal degree, the projection-ramification map is potentially generically finite and dominant.
Recall that a large class of varieties of minimal degree are the rational normal scrolls, namely $X = \P E$ for an ample vector bundle $E$ on $\P^1$ embedded by the complete linear series $\O_X(1)$.
If $\dim X \geq 3$, then $X$ is neither incompressible nor does it have a divisorial dual variety.
Therefore, for such $X$, \autoref{thm:main} leaves the question of maximal variation unanswered.

We now examine the projection-ramification map for projectivizations of vector bundles on smooth curves in more detail.
Let $C$ be a smooth curve and $E$ an ample vector bundle on $C$ of rank $r$.
Set $X = \P E$, the space of one-dimensional quotients of $E$, and $L = \O_X(1)$.
Denote by $\pi \from X \to C$ the natural map.

Let $(L, V)$ be a projection of $X$.
Recall from \eqref{eqn:ramsection} that such a projection gives a map
\[ r_V \from \det V \to H^0(X, K_X \otimes L^{r+1}),\]
whose zero locus is the ramification divisor $R(V) \subset X$.
Note that we have an isomorphism $K_X \cong \pi^* (\det E \otimes K_C) \otimes L^{-r}$, and hence, we may view $r_V$ as a map
\[ r_V \from \det V \to H^0(C, E \otimes \det E \otimes K_C).\]

We now describe another construction of a section of $E \otimes \det E \otimes K_C$ from $V$, which we call the \emph{differential construction}.
The subspace $V \subset H^0(X, L) = H^0(C, E)$ gives the evaluation map
\[ e \from V \otimes \O_C \to E.\]
If $V$ is generic, then $e$ is a surjection, and its kernel is canonically isomorphic to $\det E^* \otimes \det V$.
Consider the diagram
\begin{equation}\label{eqn:differential_construction}
\begin{tikzcd}
  0 \arrow{r}& \det E^* \otimes \det V \arrow{r}\arrow{d}{d_V}& V \otimes \O_C \arrow{r}{e}\arrow{d}{e}& E \arrow{r}\arrow[equal]{d}& 0 \\
  0 \arrow{r}& K_C \otimes E \arrow{r}& P(E) \arrow{r}& E \arrow{r}& 0,
\end{tikzcd}
\end{equation}
where the bottom row is the standard sequence associated to $P(E)$, both maps labeled $e$ are evaluation maps, and the map $d_V$ is the map induced by them.
The map $d_V$ gives a map
\[ d_V \from \det V \to H^0(C, E \otimes \det E \otimes K_C).\]
\begin{proposition}\label{prop:rdv}
  In the setup above, the two maps $d_V$ and $r_V$ are equal.
\end{proposition}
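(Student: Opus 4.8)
\emph{The plan} is to exhibit both $d_V$ and $r_V$ as the pushforward along $\pi$ of a single morphism of line bundles on $X=\P E$, built from the relative principal parts of $L$. Since a generator of $\det V$ is carried by each of $d_V$ and $r_V$ into $H^0(C,E\otimes\det E\otimes K_C)$, and two global sections coincide once they coincide over a dense open of $C$, it suffices to work over a dense open $U\subseteq C$ over which $e\from V\otimes\O_U\to E$ is surjective (possible since $V$ is generic), so that the differential construction applies over all of $U$. Let $P_{X/C}(L)$ be the sheaf of relative principal parts, fitting into $0\to\Omega_{X/C}\otimes L\to P_{X/C}(L)\to L\to 0$ and into $0\to\pi^*K_C\otimes L\to P(L)\to P_{X/C}(L)\to 0$. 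Because the fibres of $\pi$ are projective spaces, $K_{X/C}\cong\pi^*\det E\otimes L^{-r}$, hence $\det P_{X/C}(L)=K_{X/C}\otimes L^{r}=\pi^*\det E$. Composing $e\from V\otimes\O_X\to P(L)$ with the projection yields the relative evaluation $e'\from V\otimes\O_X\to P_{X/C}(L)$; restricted to a fibre $\P E_c\cong\P^{r-1}$ it is the evaluation $V\otimes\O\to P_{\P^{r-1}}(\O(1))$ attached to $e_c\from V\to E_c$, hence is surjective over $\pi^{-1}U$, with kernel the line bundle $\pi^*N$, where $N:=\det E^{-1}\otimes\det V$ is exactly the line bundle appearing in \eqref{eqn:differential_construction}. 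Since $\pi^*N=\ker e'$, the composite $\pi^*N\hookrightarrow V\otimes\O_X\xrightarrow{\,e\,}P(L)$ factors through the subsheaf $\pi^*K_C\otimes L$, and so defines a map $\delta\from\pi^*N\to\pi^*K_C\otimes L$.

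Next I would show $\pi_*\delta=d_V$. Using the identifications $\pi_*(V\otimes\O_X)=V\otimes\O_C$ and $\pi_*(\pi^*K_C\otimes L)=K_C\otimes E$, together with the natural isomorphism $\pi_*P(L)\cong P(E)$ that is compatible with the evaluation maps out of $V\otimes\O$ and with the subsheaf $K_C\otimes E$, the pushforward of $\pi^*N\hookrightarrow V\otimes\O_X\xrightarrow{e}P(L)$ becomes $N\hookrightarrow V\otimes\O_C\xrightarrow{e}P(E)$. As $N=\ker(e\from V\otimes\O_C\to E)$, this is precisely the left column of \eqref{eqn:differential_construction}, so $\pi_*\delta=d_V$.

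Then I would show $\pi_*\delta=r_V$. The ladder of short exact sequences $0\to\pi^*N\to V\otimes\O_X\xrightarrow{e'}P_{X/C}(L)\to0$ over $0\to\pi^*K_C\otimes L\to P(L)\to P_{X/C}(L)\to0$, with vertical maps $\delta$, $e$, and $\operatorname{id}$, commutes by construction. Taking top exterior powers — the determinant of a block upper-triangular map being the product of the diagonal blocks, the bottom block here being $\operatorname{id}_{P_{X/C}(L)}$ — shows that $\det e\from\det V=\pi^*N\otimes\det P_{X/C}(L)\to(\pi^*K_C\otimes L)\otimes\det P_{X/C}(L)=\bigwedge^{r+1}P(L)$ equals $\delta\otimes\operatorname{id}$. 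Since $\bigwedge^{r+1}P(L)=K_X\otimes L^{r+1}$ canonically and $\det P_{X/C}(L)=\pi^*\det E$, applying $\pi_*$ and passing to global sections turns $\det e$ into the map $r_V$ of \eqref{eqn:ramsection} (the factor $\pi^*\det E$ producing exactly the twist by $\det E$ in the target) and turns $\delta\otimes\operatorname{id}$ into $\pi_*\delta$. Hence $r_V=\pi_*\delta=d_V$.

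\emph{The main obstacle} is the single non-formal ingredient above: the natural isomorphism $\pi_*P(L)\cong P(E)$, compatible with the evaluation maps and with the defining extensions of $E$ by $K_C\otimes E$. This should follow directly from the presentation of principal parts as a pushforward from the first-order neighbourhood of the diagonal, the point being that the fibre-direction jets push forward to $E$ with the subsheaf $\Omega_{X/C}\otimes L$ contributing nothing, so that $\pi_*P(L)$ is the unique extension of $E$ by $K_C\otimes E$ carrying the evaluation — which characterises $P(E)$; the remaining steps are the calculus of principal parts on a projective bundle and the multiplicativity of determinants. A fully hands-on alternative also works: trivialize $E$ near a point with a local coordinate $t$, which trivializes $L$ over a chart with coordinates $x_1,\dots,x_{r-1},t$ so that a section $\sum_i s_i(t)\epsilon_i$ of $E=\pi_*L$ becomes $\sum_{i<r}s_i(t)x_i+s_r(t)$; then row and column operations together with Cramer's rule show that both $r_V$ and $d_V$ equal the section $\sum_i D_i(t)\,\epsilon_i\otimes dt$, where $D_i$ is the determinant of the coefficient matrix of a basis of $V$ with its $i$-th column replaced by its $t$-derivative. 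This route is correct but longer, because of the bookkeeping of the several natural isomorphisms on $\P E$ and the signs they carry.
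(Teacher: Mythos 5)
Your proposal is correct and follows essentially the same route as the paper's proof: both compare the kernel of the relative evaluation $V\otimes\O_X\to P_{X/C}(L)$ (identified with $\pi^*E$ via the fibrewise evaluation isomorphism) against the extension $0\to\pi^*K_C\otimes L\to P(L)\to P_{X/C}(L)\to 0$, observe that the determinant of the resulting ladder is the induced map on kernels, and push forward along $\pi$. The only difference is one of emphasis — you isolate and justify the compatibility $\pi_*P(L)\cong P(E)$ with the evaluation maps, which the paper uses without comment.
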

\begin{proof}
  Recall that $r_V$ is induced by the determinant of the evaluation map
  \[ V \otimes \O_X \to P(L).\]
  Denote by $P_\pi(L)$ the bundle of principal parts of $L$ along the fibers of $\pi$.
  More explicitly,
  \[ P_\pi(L) = {\pi_1}_* \left(\pi_2^* L \otimes \left(\O_{X \times_\pi X} / I_{\Delta}^2\right)\right),\]
  where $\Delta \subset X \times_\pi X$ is the diagonal and $\pi_i$ for $i = 1,2$ are the two projections $X \times_\pi X \to X$.
  It is easy to check that the evaluation map $\pi^* E \to L$ induces an isomorphism $\pi^* E \to P_\pi(L)$.
  Furthermore, we have the sequence
  \[ 0 \to \pi^* K_C \otimes L \to P(L) \to P_\pi(L) \to 0.\]
  By combining this with the identification $\pi^* E = P_\pi(L)$, and the top row of \eqref{eqn:differential_construction}, we get the diagram
  \begin{equation}\label{eqn:pxpl}
    \begin{tikzcd}
      0 \arrow{r}& \pi^*(\det E^* \otimes \det V) \arrow{r}\arrow{d}{p}& V \otimes \O_X \arrow{r}\arrow{d}{e}& \pi^* E \arrow{r}\arrow{d}\arrow[equal]{d}& 0\\
      0 \arrow{r}& \pi^* K_C \otimes L \arrow{r}& P(L) \arrow{r}& P_\pi(L) \arrow{r}& 0.
    \end{tikzcd}
  \end{equation}
  From the diagram, we see that $\det e = p$, interpreted as elements of the appropriate $\Hom$ spaces.
  By definition, after taking global sections, $\det e$ gives the section $r_V$.
  Note that, applying $\pi_*$ to the bottom row of \eqref{eqn:pxpl} yields the bottom row of \eqref{eqn:differential_construction}.
  Hence, after applying $\pi_*$, twisting by $\det E$ and taking global sections, $p$ gives the section $d_V$.
  We conclude that $r_V = d_V$.
\end{proof}

Let $R = R(V) \subset X$ be the ramification divisor of the projection given by $V$.
Note that $R$ is a divisor of class $\pi^*(\det E \otimes K_C) \otimes \O_X(1)$.
Therefore, $R \subset X$ is a sub-scroll, or equivalently, the fibers of $R \to C$ are hyperplanes in the corresponding fiber of $X \to C$.
We can obtain an explicit description of these hyperplanes in two ways, one using the original definition, and one using the differential construction.
Fix a point $c \in C$, and a uniformizer $t$ of $C$ at $c$.
Let $X_c \subset X$ and $R_c \subset R$ be the fibers of $X \to C$ and $R \to C$ over $c$, respectively.

By definition $R \subset X$ is the set of points $x \in X$ for which there exists $s \in V$ such that $v(s)$ is singular at $x$.
Since $s$ is a section of $L = \O_X(1)$, the hypersurface $v(s)$ is singular at $x$ if and only if it contains the entire fiber of $\pi \from X \to C$ through $x$.
Suppose $\pi (x) = c$.
Then, in an open set of $X$ containing $X_c$, we have $s = t s_1$ for a section $s_1$ of $\O_X(1)$.
Observe that, we have $\sing(v(s)) \cap F = v(s_1) \cap F$, and therefore, $R_c \subset X_c$ is the hyperplane cut out by $s_1$.

To obtain the same description using the differential construction, consider the top row of \eqref{eqn:differential_construction}.
Let $v$ be a local section of $V \otimes \O_C$ around $c$ that generates the kernel of $e \from V \otimes \O_C \to E$ at $c$.
The fiber of the evaluation map $V \otimes \O_C \to P(L)$ over $c$ sends $v \in V$ to the image of $e(v)$ in $L / \mathfrak m_c^2 L$.
Since $v$ generates the kernel of $e \from V \otimes \O_C \to L$ at $c$, we know that image of $e(v)$ in $L/ \mathfrak m_cL$ is zero.
Writing $e(v) = ts_1$ for a section $s_1$ of $E$ around $c$, we see that $d_V(v) = s_1 \otimes t \in E \otimes \mathfrak m_c/\mathfrak m_c^2$.
Thus, the fiber of the sub-scroll defined by $d_V$ over $c$ is the hyperplane in $X_c$ cut out by $s_1$.

Finally, we write an equation of $R(V) \subset X$ over an open subset of $C$ containing $c$ explicitly in coordinates.
Choose a trivialization $X_1, \dots, X_r$ for $E$ over an open set $U \subset C$ containing $c$.
Then $X_U \cong \P^{r-1} \times U = \Proj \O_U[X_1, \dots, X_r]$.
We have a trivialization of $K_C$ over $U$ given by $dt$.
We then get a trivialization of $P(E)|_U$ by $X_1, \dots, X_r, dt \otimes X_1, \dots, dt \otimes X_r$.
Choose a basis $v_0, \dots, v_r$ of $V$, and suppose the map $e \from V \otimes \O_U \to E_U$ is given by
\[ e(v_i) = \sum m_{i,j} X_j,\]
for $m_{i,j} \in \O_U$, where $0 \leq i \leq r$ and $1 \leq j \leq r$.
Then the map $\det E^* \otimes \det V \to V \otimes \O_U$ defining the kernel of $e$ is given by the $r \times r$ minors of the matrix $(m_{i,j})$.
Denote the $\ell$-th minor by $M_\ell$; that is $M_\ell = (-1)^{\ell}\det (m_{i,j} \mid i \neq \ell)$.
Then the map $d_V$ sends the generator to the element of $E \otimes K_C$ given by
\[ \sum_{i,j} M_i \cdot \frac{\partial m_{i,j}}{\partial t} \cdot (dt \otimes X_j).\]
Note that the expression above is the determinant of the $(r+1) \times (r+1)$ matrix
\begin{equation}\label{eqn:Rmatrix}
  \begin{pmatrix}
  m_{0,1} & m_{0,2} & \dots & m_{0,r} & \sum_{i = 1}^r \frac {\partial m_{0,j}}{\partial t} \cdot dt \otimes X_j \\
  m_{1,1} & m_{1,2} & \dots & m_{1,r} & \sum_{i = 1}^r \frac {\partial m_{1,j}}{\partial t} \cdot dt \otimes X_j \\
  \vdots & \ddots & \dots & \vdots & \vdots \\
  m_{r,1} & m_{r,2} & \dots & m_{r,r} & \sum_{i = 1}^r \frac {\partial m_{r,j}}{\partial t} \cdot dt \otimes X_j \\
\end{pmatrix}.
\end{equation}
This gives an equation for $R_U \subset X_U = \Proj \O_U[X_1, \dots, X_r]$.

\subsection{Failure of maximal variation}\label{sec:failure}
In this section, we show that there exist ample vector bundles $E$ of rank $r \geq 4$ on $\P^1$ such that the projection-ramification map for $X = \P E$ is not generically finite.
In other words, a generic projection of $X$ can be deformed in a one-parameter family so that the ramification divisor remains unchanged.

Recall that the projection-ramification map for $X = \P E$ and the complete linear series of $L = \O_X(1)$ is a map
\[ \rho \from \Gr(r+1, H^0(X, L)) \dashrightarrow |K_X \otimes L^{r+1}|,\]
or equivalently a map
\[ \rho \from \Gr(r+1, H^0(\P^1, E)) \dashrightarrow \P H^0(\P^1, E \otimes \det E \otimes K_{\P^1})^*.\]
By construction, $\rho$ is equivariant with respect to the action of $\Aut(X)$, and in particular, by the subgroup $\Aut(X/\P^1)$.

We engineer the failure of maximal variation using the following observation.
\begin{proposition}\label{prop:trivialStabilizer}
  Let $E$ be an ample vector bundle of rank $r$ on $\P^1$.
  Then a generic point of $\Gr(r+1, H^0(\P^1, E))$ has a trivial stabilizer under the action of $\Aut(\P E/\P^1)$.
\end{proposition}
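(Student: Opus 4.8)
The plan is to reduce the proposition to a one-line consequence of ampleness, namely $H^0(\P^1, E^*) = 0$, applied to the evaluation map of a generic subspace $V$. First I would pin down the acting group. Since $\P E_1 \cong \P E_2$ over $\P^1$ precisely when $E_1 \cong E_2 \otimes M$ for some line bundle $M$ on $\P^1$, and since an isomorphism $E \cong E \otimes M$ forces $M^{\otimes r} \cong \cO_{\P^1}$ upon comparing determinants, hence $M \cong \cO_{\P^1}$ because $\Pic \P^1$ is torsion-free, every automorphism of $\P E$ over $\P^1$ is induced by an $\cO_{\P^1}$-linear automorphism of $E$. Thus $\Aut(\P E/\P^1) = \Aut(E)/\Gm$, and its action on $\Gr(r+1, H^0(\P^1, E))$ is the one coming from the natural action of $\Aut(E)$ on $H^0(\P^1, E)$.

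Next I would pass to a generic $V$. As $E$ is ample on $\P^1$ it is globally generated, so a general $(r+1)$-dimensional subspace $V \subset H^0(\P^1, E)$ generates $E$; that is, the evaluation map $e_V \from V \otimes \cO_{\P^1} \to E$ is surjective on a nonempty open subset of the Grassmannian. (This is the surjectivity used in the differential construction of \autoref{sec:prscrolls}; the locus where it fails is a proper closed subset by a routine dimension count of the incidence correspondence over $\P^1$, each bad fiber over a point of $\P^1$ having codimension $2$ in the Grassmannian.) Fix such a $V$, and let $N_V = \ker e_V$; since the quotient $E$ is locally free, $N_V$ is a line subbundle of $V \otimes \cO_{\P^1}$ of degree $-d$, where $d = \deg E$, so $N_V \cong \cO_{\P^1}(-d)$.

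The heart of the argument is then short. Suppose $\tilde g \in \Aut(E)$ fixes $V$, and set $\phi = \tilde g|_V \in \GL(V)$. Because $\tilde g$ is $\cO_{\P^1}$-linear, the square $\tilde g \circ e_V = e_V \circ (\phi \otimes \id)$ commutes, so $\phi \otimes \id$ preserves $N_V$. Since $\Aut(\cO_{\P^1}(-d)) = \Gm$, after rescaling $\tilde g$ by a constant (which does not alter its class in $\Aut(\P E/\P^1)$) I may assume that $\phi \otimes \id$ restricts to the identity on $N_V$. Then $\phi \otimes \id - \id_{V \otimes \cO_{\P^1}}$ vanishes on $N_V$, hence factors as $\iota \circ e_V$ through the cokernel $E = \coker(N_V \hookrightarrow V \otimes \cO_{\P^1})$, with $\iota \in \Hom_{\P^1}(E, V \otimes \cO_{\P^1}) = V \otimes H^0(\P^1, E^*)$. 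But $E$ ample makes $E^*$ a direct sum of line bundles of negative degree, so $H^0(\P^1, E^*) = 0$ and therefore $\iota = 0$. Hence $\phi = \id_V$, and then the commuting square together with the surjectivity of $e_V$ forces $\tilde g = \id_E$. Consequently the original $\tilde g$ was a scalar, so its class in $\Aut(\P E/\P^1)$ is trivial, which proves the proposition.

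I do not expect a serious obstacle: the crux — the last paragraph — is essentially formal once one has the right description of the stabilizer. The two points that demand care are the identification $\Aut(\P E/\P^1) = \Aut(E)/\Gm$ (resting on the determinant computation and the torsion-freeness of $\Pic \P^1$) and the genericity of surjectivity of $e_V$. It is worth emphasizing that the conclusion is triviality of the stabilizer on the nose, not merely finiteness, and that the argument genuinely uses surjectivity of $e_V$ — which is exactly why the statement is phrased for a \emph{generic} point of the Grassmannian.
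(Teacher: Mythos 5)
Your proof is correct, but it takes a genuinely different route from the paper's. The paper also reduces to the action of fibrewise automorphisms of $E$ on a generic $V$, but it then argues pointwise: it fixes $r+1$ points $p_0,\dots,p_r \in \P^1$, takes the vectors $v_i \in V$ killed by evaluation at $p_i$ (i.e.\ the fibres of your $N_V$ at the $p_i$, viewed inside $V$), evaluates them at a general $t \in \P^1$ to get $r+1$ points of $\P(E|_t)$ in linear general position (using that $E(-1)$ is globally generated), and concludes that a stabilizing automorphism must fix $r+1$ general-position points of a $\P^{r-1}$ and hence be the identity on a general fibre, hence everywhere. Your argument replaces the general-position check (which the paper leaves to the reader) with the sheaf-theoretic vanishing $\Hom(E, \O_{\P^1}) = H^0(\P^1, E^*) = 0$ and the factorization of $\phi \otimes \id - \id$ through $\coker(N_V \hookrightarrow V \otimes \O_{\P^1}) = E$; this is cleaner and entirely self-contained, and it also makes explicit the identification $\Aut(\P E/\P^1) = \Aut(E)/\Gm$ (via the determinant and torsion-freeness of $\Pic \P^1$), which the paper uses tacitly here and in \autoref{prop:specialE}. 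The trade-off is that the paper's argument is more visibly geometric and localizes the obstruction to a single fibre, while yours isolates exactly which positivity of $E$ is being used. Both hinge on the same genericity input, namely surjectivity of the evaluation map $e_V$ for generic $V$.
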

\begin{proof}
  Fix $(r+1)$ distinct points $p_0, \dots, p_r \in \P^1$.
  Let $V \subset H^0(\P^1, E)$ be a generic $(r+1)$ dimensional subspace.
  Let $e \from V \otimes \O_{\P^1} \to E$ be the evaluation map.
  The points $p_0, \dots, p_r$ give vectors $v_0, \dots, v_r \in V$, unique up to scaling, defined by the property that $e(v_i) = 0$ in the fiber $E|_{p_i}$.
  Choose a generic point $t \in \P^1$.
  We get $(r+1)$ points $x_0, \dots, x_r \in \P E^*|_{t} \cong \P^{r-1}$ given by $e(v_0), \dots, e(v_r)$ evaluated at $t$.
  For generic $V$ and $t$, it is easy to check that these points are in linear general position, using the fact that $E \otimes \O(-1)$ is generated by global sections.
  Any element of $\Aut(\P E/\P^1)$ that fixes $V$ must fix $x_0, \dots, x_r$.
  But then it must act as the identity on the projective space $\P E^*|_t$, and hence on the dual projective space $\P E|_t$.
  Since $t \in \P^1$ is general, it follows that it must be the identity.
\end{proof}

\begin{proposition}\label{prop:specialE}
  There exist ample vector bundles $E$ of every rank $\geq 4$ such that a general point of $\P H^0(\P^1, E \otimes \det E \otimes K_{\P^1})$ has a positive-dimensional stabilizer under $\Aut(\P E/\P^1)$.
  In particular, we may take $E = \O(1)^{r-1} \oplus \O(k+1)$ where $k \geq 1$ and $r \geq 4$.
\end{proposition}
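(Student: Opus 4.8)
The plan is to prove the displayed assertion directly: set $E=\O(1)^{r-1}\oplus\O(k+1)$ with $k\ge 1$ and $r\ge 4$, make the action of $G:=\Aut(\P E/\P^1)$ on $\P H^0(\P^1,F)$, with $F:=E\otimes\det E\otimes K_{\P^1}$, completely explicit, and then pin down the stabilizer of a general point by a parameter count. Conceptually, the point is that for such very imbalanced $E$ the group $\Aut(E)$ is large because of its off-diagonal part, while the representation $H^0(F)$ is only modestly larger, so there is not enough room for the $G$-action on $\P H^0(F)$ to have finite generic stabilizer.

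First I would write down the group and the representation explicitly. Since $r-1\ge 2$ and $k\ge 1$, the splitting type of $E$ has no self-isomorphism after twisting by a nontrivial line bundle on $\P^1$, so $G=\Aut(E)/\Gm$. Because $\Hom(\O(k+1),\O(1))=H^0(\O(-k))=0$, an element of $\Aut(E)$ is a block ``lower triangular'' triple $(A,B,c)$, where $A\in\GL_{r-1}$ acts on the $\O(1)^{r-1}$ summand, $c\in\Gm$ acts on the $\O(k+1)$ summand, and $B\in\Hom(\O(1)^{r-1},\O(k+1))=H^0(\O(k))^{\oplus(r-1)}$ is the off-diagonal block; in particular $\dim G=(r-1)^2+(r-1)(k+1)$. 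Next, since $\det E=\O(r+k)$ and $K_{\P^1}=\O(-2)$, we have $F=\O(r+k-1)^{r-1}\oplus\O(r+2k-1)$, so a section of $F$ is a tuple $(\vec p,q)=(p_1,\dots,p_{r-1},q)$ with $\deg p_i\le r+k-1$ and $\deg q\le r+2k-1$. A direct computation of the induced action on $E\otimes\det E\otimes K_{\P^1}$ (tautological on $E$, by $\det$ on $\det E$, trivial on $K_{\P^1}$) shows that $(A,B,c)$ sends $[(\vec p,q)]$ to $[(A\vec p,\ \sum_i b_i p_i+c\,q)]$ in $\P H^0(F)$; the scalars in $\Aut(E)$ act on $H^0(F)$ through the character $\lambda\mapsto\lambda^{r+1}$, so the action indeed descends to $G$.

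Then I would analyze the stabilizer of a general $\xi=(\vec p,q)$. The $r+k$ coefficient vectors of $\vec p$ lie in $\k^{r-1}$ and, since $r+k>r-1$, span it for general $\xi$; hence a fixed-point relation, i.e.\ $(A\vec p,\ \sum_i b_i p_i+c\,q)=\nu(\vec p,q)$ for some $\nu\in\Gm$, forces $A=\nu I$, and what remains is the single $H^0(\O(r+2k-1))$-valued linear equation $\sum_i b_i p_i=(\nu-c)q$ in the unknowns $(\nu,c,B)\in\Gm\times\Gm\times H^0(\O(k))^{\oplus(r-1)}$. Writing $s=\nu-c$ and applying rank--nullity to $(s,B)\mapsto\sum_i b_i p_i-sq$ (from dimension $1+(r-1)(k+1)$ to dimension $r+2k$), the solution locus has dimension at least $1+(r-1)(k+1)-(r+2k)=k(r-3)$; letting $\nu$ also vary, $\operatorname{Stab}_{\Aut(E)}([\xi])$ has dimension $\ge 1+k(r-3)$, the open condition $c=\nu-s\ne 0$ being satisfied near $(s,B)=(0,0)$. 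Dividing by the $1$-dimensional subgroup of scalars, which is contained in this stabilizer, gives $\dim\operatorname{Stab}_G([\xi])\ge k(r-3)\ge 1$ because $r\ge 4$ and $k\ge 1$. Since stabilizer dimension is upper semicontinuous, a general point of $\P H^0(F)$ has a positive-dimensional stabilizer, which is the claim.

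The step I expect to be the crux is this last parameter count, specifically the inequality $1+(r-1)(k+1)-(r+2k)=k(r-3)\ge 1$, which says that the off-diagonal block $B$ together with the single rescaling parameter $s$ has strictly more freedom than is needed to modify $q$. This is precisely where the hypothesis $r\ge 4$ enters: for $r=3$ one gets only $\dim\operatorname{Stab}_G([\xi])\ge 0$, consistent with maximal variation holding for threefold scrolls. The remaining points — that no honest subgroup of $G$ acts trivially on all of $\P H^0(F)$ (so the stabilizer genuinely varies with $\xi$), that the stabilizer elements produced survive the $\Gm$-quotient, and that $c\ne 0$ can be arranged — are routine to check.
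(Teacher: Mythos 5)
Your proof is correct and takes essentially the same approach as the paper: the same example $E=\O(1)^{r-1}\oplus\O(k+1)$, the same block lower-triangular description of $\Aut(E)$, and the same key inequality $(r-1)(k+1)\geq r+2k$, i.e.\ $k(r-3)\geq 1$. The only (harmless) difference is that you bound the full stabilizer from below by rank--nullity, whereas the paper fixes $A=\id$ and exhibits explicit fixers by checking surjectivity of the multiplication map $H^0(\O(k))\otimes\langle p_1,\dots,p_{r-1}\rangle\to H^0(\O(r+2k-1))$.
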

\begin{proof}
  Take
  \[ E = \O(a)^{r-1} \oplus \O(b),\]
  where $0 < a < b$ are to be determined.
  Elements of $\Aut (E/\P^1)$ can be represented by block lower triangular square matrices
  \[M = 
    \begin{pmatrix}
      A &  \\
      U & B
    \end{pmatrix},
  \]
  where $A \in \GL_a(\k)$, $B \in \k^\times$, and $U = (u_i)$ is an $(r-1)$ length row with entries in $H^0(\P^1, \O(b-a))$.
  Set $d = (r-1)a + b$ so that $\det E = \O(d)$.
  Suppose $a$, $b$, and $r$, are such that
  \begin{equation}\label{eqn:requirement}
    (r-1) (b-a+1) \geq b+d-1 = (r-1)a+2b-1.
  \end{equation}
  Take a general element of $H^0(\P^1, E \otimes \det E \otimes K_{\P^1})$; say it is given by the column vector
  \[ v = (p_1, \dots, p_{r-1}, q)^T,\]
  where the $p_i$ (resp $q$) are homogeneous polynomials in $X, Y$ of degree $a+d-2$ (resp $b+d-2$).
  We take $A = \id_{r-1}$ and $B = \lambda$ for some $\lambda \in \k^\times$, and show that there exists a $U = (u_{i})$ such that $Mv = v$.
  Indeed, we have $Mv = (p_1, \dots, p_r, q')$, where
  \[ q' = \lambda q + \sum u_{i}p_i. \]
  Let $W \subset H^0(\P^1, \O(a+d-1))$ be the vector space spanned by $p_1, \dots, p_{r-1}$.
  Consider the multiplication map
  \[ H^0(\P^1, \O(b-a)) \otimes W \to H^0(\P^1, \O(b+d-2)).\]
  Thanks to \eqref{eqn:requirement}, the dimension of the source is at least as much as the dimension of the target.
  It is easy to check that the map is in fact surjective for generic $p_1, \dots, p_{r-1}$.
  In particular, there exist $u_i \in H^0(\P^1, \O(b-a))$ for $i = 1, \dots, r-1$, such that
  \[ q(1-\lambda) = \sum u_i p_i.\]
  With this choice of $U = (u_i)$, we get $M$ such that $Mv = v$.

  Finally, note that the requirement \eqref{eqn:requirement} is satisfied for $a = 1$ and $b = k+1$ if $k \geq 1$ and $r \geq 4$.
\end{proof}

\begin{corollary}[\autoref{thm:counterexamples}]
  \label{cor:actualcounterexamples}
  Let $r \geq 3$ and $d \geq r+1$.
  There exist ample vector bundles $E$ of rank $r$ and degree $d$ on $\P^1$ such that for $X = \P E$ and the complete linear series $L = \O_X(1)$, the projection-ramification map $\rho_X$ is not generically finite onto its image.
\end{corollary}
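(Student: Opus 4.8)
The plan is to exploit the $G$-equivariance of the projection--ramification map, where $G = \Aut(\P E/\P^1)$, playing off the triviality of the generic stabilizer on the source of $\rho_X$ against a positive-dimensional stabilizer on the target.

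First I would pin down the bundle: take $E = \O_{\P^1}(1)^{\oplus(r-1)} \oplus \O_{\P^1}(d-r+1)$. Since $d \geq r+1$ the last summand has degree at least $2$, so $E$ is an ample vector bundle of rank $r$ and degree $d$ on $\P^1$, and (for $r \geq 4$) it is precisely one of the bundles produced in \autoref{prop:specialE}. Moreover $X = \P E$, embedded by $\O_X(1)$, is a rational normal scroll, hence a variety of minimal degree, so \autoref{thm:actualminimaldegree} applies: the source $\Gr(r+1, H^0(X, \O_X(1)))$ and the target $|K_X + (r+1)H| = \P H^0(\P^1, E \otimes \det E \otimes K_{\P^1})^*$ of $\rho_X$ have equal dimension. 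Since the target is an irreducible projective space, this gives the dichotomy I will use: for $X$ of minimal degree, $\rho_X$ is generically finite onto its image if and only if it is dominant. So it suffices to prove that $\rho_X$ is not dominant.

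For the core step I argue by contradiction: suppose $\rho_X$ is dominant. Recall that $\rho_X$ is equivariant for the $G$-actions on its source and target, and that the domain of $\rho_X$ (where it is a morphism) is $G$-invariant. By \autoref{prop:trivialStabilizer}, a general point $V$ of the source has trivial $G$-stabilizer; shrinking the locus of good $V$, I may also assume $\rho_X$ is defined at $V$ and that the fiber of $\rho_X$ through $V$, taken inside the domain, is finite --- possible since $\rho_X$ is a dominant rational map between irreducible varieties of the same dimension. On the other hand, by \autoref{prop:specialE} a general point of the target has a positive-dimensional $G$-stabilizer, and since the locus of points with positive-dimensional stabilizer is closed (upper semicontinuity of stabilizer dimension) it must be all of the target; in particular $S := \operatorname{Stab}_G(\rho_X(V))$ is positive-dimensional. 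But the identity component $S^\circ$ is a connected group acting on the finite, $S$-stable fiber of $\rho_X$ through $V$, so it acts trivially there and, in particular, fixes $V$; hence $S^\circ \subseteq \operatorname{Stab}_G(V) = \{1\}$, contradicting $\dim S \geq 1$. Therefore $\rho_X$ is not dominant, and by the dichotomy above it is not generically finite onto its image, which is \autoref{thm:counterexamples}.

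As for where the difficulty lies: essentially all the real work has been carried out already in \autoref{prop:trivialStabilizer} and \autoref{prop:specialE} --- especially the explicit block lower-triangular automorphisms of $\P E/\P^1$ and the surjectivity of the multiplication map of line-bundle sections behind the latter --- so what remains is formal. The points I would take care over are the reduction that, for $X$ of minimal degree, $\rho_X$ is generically finite onto its image exactly when it is dominant; the passage from ``a general point of the target has a positive-dimensional stabilizer'' to ``every point does''; and the check that a general $V$ in the source simultaneously lies in the domain of $\rho_X$, has finite fiber, and has trivial $G$-stabilizer. I do not expect a genuine obstacle beyond assembling these.
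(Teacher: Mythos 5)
Your proof is correct and follows the paper's own argument: combine the trivial generic stabilizer on the source (\autoref{prop:trivialStabilizer}) with the positive-dimensional generic stabilizer on the target (\autoref{prop:specialE}) and the $\Aut(\P E/\P^1)$-equivariance of $\rho_X$ to rule out dominance, then use the equality of source and target dimensions for varieties of minimal degree to conclude that $\rho_X$ is not generically finite; your connected-stabilizer-acting-on-a-finite-fiber argument is just a fuller version of the step the paper leaves implicit. The only caveat --- shared with the paper's own proof, which likewise invokes \autoref{prop:specialE} --- is that the argument covers $r \geq 4$ rather than the stated $r \geq 3$ (and indeed \autoref{cor:maxvariation3scrolls} shows the $r=3$ case of the statement cannot hold as written, consistent with \autoref{thm:counterexamples} requiring $r \geq 4$).
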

\begin{proof}
  Take $E$ such that the action of $\Aut(X/\P^1)$ on a generic point of $|K_X\otimes L^{r+1}|$ has a positive-dimensional stabilizer (see \autoref{prop:specialE}).
  Since $\rho_X \from \Gr(r+1, H^0(X, L)) \dashrightarrow |K_X \otimes L^{r+1}|$ is equivariant with respect to the action of $\Aut(X/\P^1)$, and a generic
  point of the source does not have a positive-dimensional stabilizer (see \autoref{prop:trivialStabilizer}), it follows that $\rho_X$ cannot be dominant.
  Since the dimension of the source and target of $\rho_X$ are the same, $\rho_X$ is not generically finite.
\end{proof}

\begin{remark}
In all the examples of scrolls where we know that maximal variation fails, the failure is implied by the presence of generic stabilizers.
We do not know, however, if the presence of stabilizers is {\sl equivalent} to the failure of maximal variation.
\end{remark}

\begin{remark}
  If $k = 1$ and $r \geq 4$, then $X$ is the most balanced scroll of its degree and rank, and hence, generic in moduli.
  Therefore, the non-dominance of projection-ramification is not directly connected to the eccentricity of the splitting type of a scroll. 
\end{remark}

\subsection{Eccentric threefold scrolls} \label{sec:eccentric_threefolds}
\autoref{thm:counterexamples} leaves open the case of threefold scrolls (surface scrolls are covered by \autoref{cor:lowdim}).
We settle this case in this section by showing that the projection-ramification map for threefold scrolls is always generically finite, and thus the statement of \autoref{thm:counterexamples} is sharp in $r$.

Let $E = \O(1) \oplus \O(1) \oplus \O(k+1)$, for $k \geq 0$.
Set $X = \P E$ and $L = \O_X(1)$.
\begin{proposition}\label{prop:eccentric_threefold}
  The map $\rho_X \from \Gr(4, H^0(X, L)) \dashrightarrow |K_X + 4L|$ is birational.
\end{proposition}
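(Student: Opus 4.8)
The plan is to choose explicit birational coordinates on the Grassmannian, compute the ramification section in them, and then invert the resulting map by hand. Write $C = \P^1$ and $E = \O(1)^{\oplus 2} \oplus \O(k+1)$, so that $H^0(X,L) = H^0(\P^1, E)$ has dimension $k+6$ and, since $X$ has minimal degree, \autoref{thm:actualminimaldegree} gives $\dim\Gr(4, H^0(X,L)) = \dim |K_X + 4L| = 4k+8$, where $|K_X + 4L| = \P H^0\big(\P^1, E\otimes\det E\otimes K_C\big)^* = \P H^0\big(\O(k+2)^{\oplus 2}\oplus\O(2k+2)\big)^*$. For a general $V$ the intersection $V \cap H^0(\P^1, \O(k+1))$ is zero, so $V$ is the graph of a unique linear map $\Phi = (\Phi_1,\Phi_2) \from H^0(\O(1))^{\oplus 2} \to H^0(\O(k+1))$; as $\dim\Hom\big(H^0(\O(1))^{\oplus 2}, H^0(\O(k+1))\big) = 4(k+2) = 4k+8$, this realizes an open dense subset of $\Gr(4, H^0(X,L))$ as an open dense subset of $\Hom\big(H^0(\O(1))^{\oplus 2}, H^0(\O(k+1))\big)$, and it suffices to study $\rho_X$ there.

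Next I would compute $R(V)$ in these coordinates using the differential construction (\autoref{prop:rdv}) and the explicit matrix \eqref{eqn:Rmatrix}. For general $V$ the evaluation $e \from V\otimes\O_C \to E$ is generically surjective with kernel $K \cong \O(-(k+3))$, and over a general $c\in C$ a generator of $K$ is $v_c = \big(\alpha_1\ell_c,\ \alpha_2\ell_c,\ \alpha_1\Phi_1(\ell_c) + \alpha_2\Phi_2(\ell_c)\big)$, where $\ell_c$ is a chosen linear form vanishing at $c$ and $(\alpha_1 : \alpha_2)$ is cut out by the single linear condition $\alpha_1\phi_1 + \alpha_2\phi_2 = 0$, with $\phi_i := \Phi_i(\ell_c)|_c$ a form of degree $k+2$ depending linearly on $\Phi_i$. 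Tracing through \eqref{eqn:differential_construction} identifies the ramification section $R(V)$, as a section of $\O(k+2)^{\oplus 2}\oplus\O(2k+2)$, with $(\phi_2,\ -\phi_1,\ \sigma)$, where $\sigma$ is the third coordinate of the $1$-jet of $v_c$ at $c$: the form $\phi_2\,\Phi_1(\ell_c) - \phi_1\,\Phi_2(\ell_c)$ vanishes at $c$, and $\sigma$ is obtained by dividing it by $\ell_c$ and evaluating at $c$. Thus $\sigma$ has degree $2k+2$ and is bilinear in $(\Phi_1,\Phi_2)$.

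To invert, I would use three elementary facts: (i) each assignment $\Phi_i \mapsto \phi_i$ is a surjection $\Hom(H^0(\O(1)), H^0(\O(k+1))) \to H^0(\O(k+2))$ whose kernel is the $(k+1)$-dimensional space of multiplications $\ell \mapsto \ell H$, $H\in H^0(\O(k))$; (ii) replacing $\Phi_i$ by $\Phi_i + (\text{multiplication by } H_i)$ leaves $\phi_1,\phi_2$ unchanged and changes $\sigma$ by $\phi_2 H_1 - \phi_1 H_2$; (iii) for coprime forms $\rho_1,\rho_2$ of degree $k+2$, the map $(H_1,H_2) \mapsto \rho_1 H_1 + \rho_2 H_2$ is injective with image a hyperplane $W \subset H^0(\O(2k+2))$ depending only on $\rho_1,\rho_2$. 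Now let $[\rho_1 : \rho_2 : \sigma_0]$ be a general point of the target, so $\rho_1, \rho_2$ are coprime; any preimage $\Phi$ satisfies $\phi_2 = \lambda\rho_1$, $\phi_1 = -\lambda\rho_2$, $\sigma(\Phi) = \lambda\sigma_0$ for some $\lambda\in\k^\times$. By (i) the first two equations pin down $\Phi_1,\Phi_2$ up to the multiplication ambiguity, $\Phi = \lambda\Phi^0 + (\text{multiplication by }(H_1,H_2))$ for a particular solution $\Phi^0$; using bilinearity of $\sigma$ and (ii), the third equation becomes $\lambda\,\sigma(\Phi^0) + \rho_1 H_1 + \rho_2 H_2 = \sigma_0$, so applying the linear functional cutting out $W$ and invoking (iii) forces a unique value of $\lambda$, hence a unique $(H_1,H_2)$ and a unique $\Phi$. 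Therefore $\rho_X$ is dominant with general fibre a single point, hence (in characteristic zero) birational; in particular this gives item~(7) of \autoref{thm:examples}.

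The main obstacle I anticipate is the computation in the second and third steps: pinning down the exact shape of $\sigma$ and especially its transformation rule (ii), which amounts to a determinant manipulation of \eqref{eqn:Rmatrix} with careful bookkeeping of the twists by $\det E$ and $K_C$. Beyond that, the argument needs two genericity inputs — that the two ``small'' components $\rho_1,\rho_2$ of a general ramification section are coprime (which is clear), and that the functional cutting out $W$ does not vanish on $\sigma(\Phi^0)$, i.e.\ that $\lambda$ is genuinely constrained rather than free. The latter is precisely what separates this case from the behaviour of the counterexamples of \autoref{cor:actualcounterexamples}, and I expect it is checked most quickly by exhibiting a single $\Phi$ with $\sigma(\Phi)\notin W$.
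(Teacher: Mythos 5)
Your coordinates and your formula for the ramification section are correct and agree exactly with the paper's \eqref{eqn:threefoldram}: writing $a=\Phi_1(1)$, $c=\Phi_1(t)$, $b=\Phi_2(1)$, $d=\Phi_2(t)$, one gets $(\phi_2,-\phi_1,\sigma)=(d-bt,\ at-c,\ \gamma)$ with $\gamma=\alpha(c'-a't)+\beta(d'-b't)$, and your facts (i)--(iii) -- including the exact, not merely first-order, transformation rule (ii) -- all check out. The one genuine gap is the step you flag at the end but never verify: that $\sigma(\Phi^0)\notin W$ for a general coprime pair $(\rho_1,\rho_2)$. This is not a harmless genericity assumption: it is \emph{equivalent} to dominance of $\rho_X$ (if $\sigma(\Phi^0)\in W$ for all coprime pairs, then $\sigma(\lambda\Phi^0+m_H)=\lambda^2\sigma(\Phi^0)+\lambda(\rho_1H_1+\rho_2H_2)$ always lies in $W_{(\rho_1,\rho_2)}$, so the image of $\rho_X$ lies in a divisor and every fiber contains the one-parameter family indexed by $\lambda$), and its analogue is exactly what fails for the scrolls of \autoref{cor:actualcounterexamples}. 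So as written the proof is incomplete at its crux. The fix is as easy as you predict: take $(a,b,c,d)=(t^{k+1},1,1,0)$, so that $\rho_1=-t$ and $\rho_2=t^{k+2}-1$ are coprime as forms of degree $k+2$ and $\sigma=(k+1)t^{k+2}$; if $(k+1)t^{k+2}=-tH_1+(t^{k+2}-1)H_2$ with $\deg H_i\le k$, comparing the parts of degree $\ge k+2$ and $\le k+1$ forces $H_2=k+1$ and then $tH_1=-(k+1)$, which is absurd. Since the condition $\sigma(\Phi^0)\in W$ is closed, scale-invariant, and independent of the choice of particular solution $\Phi^0$ on the irreducible locus of coprime pairs, this single example establishes the generic non-vanishing, and your inversion then delivers existence and uniqueness of the preimage simultaneously.

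With that supplied, your argument is a genuinely different, and in some ways cleaner, route than the paper's. The paper proves generic finiteness by showing that the differential of $(a,b,c,d)\mapsto(\alpha,\beta,\gamma)$ is injective (a syzygy argument using coprimality of $\alpha$ and $\beta$), then kills the residual scaling ambiguity $\lambda(\alpha,\beta,\gamma)$ by a separate leading-coefficient computation, and finally upgrades generic finiteness to degree one by observing that the fibers of the affine map are cut out by affine-linear equations, hence connected. Your single global inversion replaces all three steps, makes the dichotomy between dominance and the counterexamples of \autoref{thm:counterexamples} visible in one line (whether $\sigma(\Phi^0)$ lies in the hyperplane $W$), and directly yields item (7) of \autoref{thm:examples}; the price is that dominance must be certified by an explicit example rather than falling out of the infinitesimal computation.
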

\begin{proof}
  The proof is by direct calculation.
  Consider the standard open subset $\A^1 = \spec \k[t] \subset \P^1$.
  Choose trivializations of the three summands of $E$ over $\A^1$ given by sections $X_1, X_2, X_3$.
  
  Let $W \subset H^0(X, L)$ be a general 4-dimensional subspace.
  Then the projection map $W \to H^0(\P^1, \O(1) \oplus \O(1))$ will be an isomorphism.
  Therefore, we can choose a basis of $W$ of the form
  \[
    X_1 + aX_3, X_2 + bX_3, tX_1 + cX_3, tX_2 + d X_3,
  \]
  where $a, b, c, d \in \k[t]$ have degree at most $k+1$.
  Using \eqref{eqn:Rmatrix}, we get that the ramification divisor of this $W$ is
  \begin{equation}\label{eqn:threefoldram}
    \begin{split}
    \rho(W) &= (d-bt)X_1 + (at-c)X_2 + \left((a't-c')(bt-d) + (at-c)(d'-b't) \right) X_3 \\
    &= \alpha X_1 + \beta X_2 + \gamma X_3, \text{ say}.
  \end{split}
\end{equation}
  In this calculation, $p'$ denotes the derivative $\frac{dp}{dt}$.
  Note that we have
  \begin{equation}\label{eqn:alphabetagamma}
    \begin{split}
    \alpha &= d-bt\\
    \beta &= at-c\\
    \gamma &= \alpha'\beta - \beta'\alpha + \alpha a + \beta b.
    \end{split}
  \end{equation}
  The degrees of $\alpha, \beta, \gamma$ are (at most) $k+2$, $k+2$, and $2k+2$, respectively.

  Consider the affine space $\A^{4k+8}$ whose coordinates correspond to the coefficients of $a, b, c, d$, and likewise, the affine space $\A^{4k+9}$ whose coordinates correspond to the coefficients of $\alpha, \beta, \gamma$.
  The expression in \eqref{eqn:threefoldram} defines a map
  \begin{align*}
    \rho^* \from \A^{4k+8} &\to \A^{4k+9} \\
    (a,b,c,d) &\mapsto (\alpha, \beta, \gamma).
  \end{align*}
  Note that the choice of basis of $W$ gives a birational isomorphism $\Gr(4, H^0(X, L)) \cong \A^{4k+8}$.
  Via this isomorphism, the projection-ramification map $\rho$ is simply the composite of $\rho^*$ and the standard projection $\pi \from \A^{4k+9} \setminus \{0\} \to \P^{4k+8} = \P H^0(\P^1, E \otimes \det E \otimes \O(-2))^*$.
  Let $Z \subset \A^{4k+9}$ be the image of $\rho^*$.

  Let $(a,b,c,d) \in \A^{4k+8}$ be a generic point.
  We show that the map induced by $\rho$ on tangent spaces is injective at this point.
  For $\epsilon^2 = 0$, we have
  \[
    \rho^* \from (a + \hat a \epsilon, b+ {\hat b} \epsilon, c +  {\hat c} \epsilon + d +  {\hat d}\epsilon) \mapsto (\alpha + \hat{\alpha} \epsilon,\beta + \hat{\beta} \epsilon, \gamma + \hat{\gamma}\epsilon),
  \]
  where
  \begin{align*}
    \hat \alpha &= \hat d - \hat b t,\\
    \hat \beta &=  \hat a t - \hat c, \text{ and }\\
    \hat \gamma & = (bt-d)({\hat a}'t-{\hat c}') + (at-c)({\hat d}'-{\hat b}'t) \\
    & \qquad + ({\hat a}t-{\hat c})(d'-b't) + ({\hat b}t-{\hat d})(a't-c').
  \end{align*}
  Suppose $\hat \alpha = \hat \beta = \hat \gamma = 0$.
  Then $(bt-d)(\hat a' t - \hat c') + (at-c) (\hat d' - \hat b' t) = 0$.
  However, for generic $a, b, c, d$, the polynomials $(bt-d)$ and $(at-c)$ have degree $(k+2)$ and are relatively prime.
  So they have no non-trivial syzygy with coefficients of degree at most $k+1$.
  As a result, we get $\hat a' t - \hat c' = 0$ and $\hat d' - \hat b' t = 0$.
  Along with $\hat a t - \hat c = 0$ and $\hat d - \hat b t = 0$, we get $\hat a = \hat b = \hat c = \hat d = 0$.
  Thus, $d \rho^*$ is injective, and hence $\rho^* \from \A^{4k+8} \to Z$ is generically finite.

  We now prove that if $(\alpha, \beta, \gamma)$ is a general point in the image of $\rho^{*}$, and $\lambda \neq 0,1$ is a constant, then $\lambda(\alpha, \beta, \gamma)$ is not in the image of $\rho^{*}$.
  In other words, the projection $\pi \from \A^{4k+9} \setminus \{0\} \to \P^{4k+8}$ restricted to $Z \setminus \{0\}$ is generically injective.  
  To this end, suppose $(a,b,c,d)$ is a general point in $\A^{4k+8}$.
  Then  $\alpha = d-bt$ and $\beta = at-c$ will be degree $k+2$ polynomials which are relatively prime.  
  For any polynomial $p(t)$, let $p^{+}$ denote the highest degree coefficient of $p$.
  Observe that $\beta^{+} = a^{+}$.  
  If $\lambda(\alpha, \beta, \gamma)$ is also realized by some quadruple $(\tilde{a}, \tilde{b}, \tilde{c}, \tilde{d})$ then we get the equations: 
	  \begin{align}\label{secondEquation}
	  	\lambda \alpha &= \tilde{d} - \tilde{b}t\\
	  	\lambda \beta &= \tilde{a}t - \tilde{c} \nonumber\\
	  	\lambda \gamma &= \lambda^{2}(\alpha'\beta - \beta' \alpha) + \lambda \alpha \tilde{a} + \lambda \beta \tilde{b}\nonumber
	  \end{align}
	  The second equation gives $\tilde{a}^{+} = \lambda \beta^{+}$.
          The last equation gives
          \[
            \gamma = \lambda(\alpha'\beta - \beta' \alpha) + \alpha \tilde{a} + \beta \tilde{b}.
          \]
          Combining the above with the equation for $\gamma$ in \eqref{eqn:alphabetagamma}, we get 
	  \begin{align*}\label{alphasbetas}
	    	\alpha (a - \beta') + \beta (b + \alpha') &= \alpha(\tilde{a} - \lambda\beta') + \beta(\tilde{b} + \lambda \alpha').
          \end{align*} 
          Since $\alpha$ and $\beta$ are relatively prime and have degree greater than $a,b,\tilde{a},\tilde{b}$, the same syzygy argument gives
          \begin{align*}
            a-\beta' &= \tilde{a} - \lambda \beta'\\
            b+\alpha' &= \tilde{b} + \lambda \alpha'.
          \end{align*} 
	     By examining top coefficients, and using $a^{+} = \beta^{+}$, $\tilde{a}^{+} = \lambda \beta^{+}$ we get
	     \begin{align*}
	     	\beta^{+} - (k+2)\beta^{+} &= \lambda\beta^{+} - \lambda(k+2)\beta^{+}, \text{ or equivalently}\\
               (1-\lambda)\beta^{+} &= (1-\lambda)(k+2)\beta^{+}.
	     \end{align*}
	     Given our assumption on $\lambda$, this is only possible if $\beta^{+} = 0$.  However, since $(a,b,c,d)$ were chosen generically, $\beta^{+} = a^{+}$ would not be zero, providing our desired contradiction.

             We have proved that $\pi \from Z \setminus \{0\} \to \P^{4k+8}$ is of degree 1.
             Therefore, it suffices to show that the degree of $\rho^* \from \A^{4k+8} \to Z$ is 1.
             Since we know that the map is generically finite, it suffices to show that a generic fiber is connected.
             Let $(\alpha,\beta,\gamma) \in Z$ be a generic point.
             The preimage of this point is cut out by the equations in \eqref{eqn:alphabetagamma}.
             Note that these are affine linear equations in $a, b, c, d$, and hence their intersection is an affine space, which is connected.
\end{proof}

\begin{corollary}\label{cor:maxvariation3scrolls}
  The projection-ramification map $\rho_{X}$ is dominant for every smooth three dimensional rational normal scroll $X \subset \P^{n}$.
\end{corollary}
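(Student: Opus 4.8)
The plan is to deduce this from the maximally imbalanced case, \autoref{prop:eccentric_threefold}, by the degeneration argument sketched in \autoref{sec:qscroll}.

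Write $X = \P E$ with $E = \O(a_1) \oplus \O(a_2) \oplus \O(a_3)$, $1 \le a_1 \le a_2 \le a_3$, and set $d = a_1 + a_2 + a_3 = \deg X$. Since $E$ is ample, $d \ge 3$, so $E_0 := \O(1) \oplus \O(1) \oplus \O(d-2)$ is again an ample rank-$3$ bundle on $\P^1$, of exactly the shape treated in \autoref{prop:eccentric_threefold} (with $k = d-3 \ge 0$). The partition $(1,1,d-2)$ is the maximum, in the dominance order, among partitions of $d$ into three positive parts, and correspondingly $E$ isotrivially specializes to $E_0$: one produces a locally free sheaf $\mathcal E$ on $\P^1 \times B$, flat over a smooth pointed affine curve $(B, 0)$, with $\mathcal E_0 \cong E_0$, $\mathcal E_b \cong E$ for $b \ne 0$, and $\mathcal E_b$ ample for all $b$.

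Next I would globalize the projection-ramification construction over $B$. Write $X_b = \P\mathcal E_b$ and $H_b = \O_{X_b}(1)$. The function $b \mapsto h^0(\P^1, \mathcal E_b)$ is constant (every summand of $\mathcal E_b$ has non-negative degree throughout), and by \autoref{thm:actualminimaldegree} the function $b \mapsto h^0(X_b, K_{X_b} + 4 H_b)$ is also constant, since for a variety of minimal degree it depends only on $n$ and $r$. Hence the source Grassmannians and the target linear systems of the maps $\rho_{X_b}$ assemble into a Grassmann bundle $\mathcal G \to B$ and a projective bundle $\mathcal P \to B$, and the $\rho_{X_b}$ into a single $B$-rational map $\rho_{\mathcal X} \from \mathcal G \dashrightarrow \mathcal P$. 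Let $\Gamma \subset \mathcal G \times_B \mathcal P$ be the closure of its graph. The projection $\Gamma \to \mathcal G$ is birational, and by \autoref{prop:eccentric_threefold} the fiber map $\Gamma_0 \to \mathcal P_0$ is dominant between varieties of the same dimension, hence generically finite. Upper semicontinuity of fiber dimension for the projection $\Gamma \to \mathcal P$ then forces $\Gamma_b \to \mathcal P_b$, and therefore $\rho_X = \rho_{X_b}$ for general $b$, to be dominant. Equivalently, this is the ``lower semicontinuity of degrees of rational maps'' of \autoref{sec:qscroll}: $\phi(a_1, a_2, a_3) \ge \phi(1, 1, d-2) = 1 > 0$.

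The only step needing genuine care is this globalization together with the constancy statements that make the semicontinuity argument apply; the remaining ingredients — the splitting classification of ample rank-$3$ bundles on $\P^1$, the existence of the isotrivial degeneration to the extremal bundle $E_0$, and the dominance of $\rho$ for $E_0$ — are respectively elementary, classical, and already established in \autoref{prop:eccentric_threefold}.
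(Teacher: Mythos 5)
Your proposal is correct and follows exactly the paper's argument: the paper's proof of \autoref{cor:maxvariation3scrolls} is precisely the isotrivial specialization of an arbitrary smooth threefold scroll to $\P(\O(1)\oplus\O(1)\oplus\O(k+1))$ combined with \autoref{prop:eccentric_threefold} and upper semicontinuity of fiber dimension. You have simply filled in the routine globalization details (constancy of $h^0$ along the family and the graph-closure formulation of semicontinuity) that the paper leaves implicit.
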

\begin{proof}
  Every such $X$ isotrivially specializes to $\P \left(\O(1) \oplus \O(1) \oplus \O(k+1)\right)$.
  The statement now follows from the upper semi-continuity of fiber dimension.
\end{proof}

The case of eccentric surface scrolls follows by similar calculations as in the proof of \autoref{prop:eccentric_threefold}; we omit the details.
\begin{proposition}\label{prop:eccentric_surface}
  Let $E = \O(1) \oplus \O(k+1)$, for $k \geq 0$.
  Set $X = \P E$ and $L = \O_X(1)$.
  Then the projection-ramification map $\rho_X \from \Gr(3, H^0(X, L)) \dashrightarrow |K_X + 3L|$ is birational.
\end{proposition}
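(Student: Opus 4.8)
The plan is to follow the proof of \autoref{prop:eccentric_threefold}: produce an explicit birational chart on $\Gr(3, H^0(X,L))$, compute $\rho_X$ in coordinates via the determinantal formula \eqref{eqn:Rmatrix}, and check that the resulting coordinate map is birational by a short linear-algebra argument. First I would build the chart. Work over $\A^1 = \spec \k[t] \subset \P^1$ and fix trivializations $X_1, X_2$ of the summands $\O(1)$ and $\O(k+1)$ of $E$ over $\A^1$, so that $X_{\A^1} \cong \Proj \k[t][X_1, X_2]$. Unlike in the threefold case, the ``small'' summand $H^0(\P^1, \O(1))$ has dimension $2 < r+1 = 3$, so the naive approach of lifting a basis as in the threefold proof carries a $3$-dimensional redundancy; I remove it by normalizing. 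For a general $V \in \Gr(3, H^0(X,L)) = \Gr(3, H^0(\P^1, E))$, the line $V \cap H^0(\P^1, \O(k+1))$ is spanned by $cX_2$ for some $c$ that is general, hence squarefree of degree $k+1$; normalize $c$ to be monic. Then $V$ has a unique basis
\[ v_0 = X_1 + aX_2, \qquad v_1 = tX_1 + bX_2, \qquad v_2 = cX_2, \]
with $a, b$ of degree $\le k$ (reduce $a, b$ modulo $c$ by the division algorithm). The coefficients of $a$ and $b$ together with the non-leading coefficients of $c$ give a birational chart $\A^{3k+3} \dashrightarrow \Gr(3, H^0(X,L))$, whose dimension matches $\dim \Gr(3, k+4) = 3k+3$, as it must by \autoref{thm:actualminimaldegree}.

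Next I would compute $\rho_X$ in this chart. Taking $(m_{0,1}, m_{0,2}) = (1, a)$, $(m_{1,1}, m_{1,2}) = (t, b)$, $(m_{2,1}, m_{2,2}) = (0, c)$ in the $r = 2$ case of \eqref{eqn:Rmatrix} and dropping the trivialization $dt$ of $K_{\P^1}$, one gets
\[ \rho^*(a,b,c) \;=\; (-c)\cdot X_1 \;+\; \bigl(t\, W(c,a) - W(c,b)\bigr)\cdot X_2 \;\in\; H^0(\O(k+1)) \oplus H^0(\O(2k+1)), \]
where $W(c,f) := cf' - c'f = c^2 (f/c)'$ is the Wronskian. (Here $K_X + 3L = \pi^*\O(k)\otimes\O_X(1)$, so $\pi_*(K_X + 3L) = \O(k+1)\oplus\O(2k+1)$, which is the sheaf $E \otimes \det E \otimes K_{\P^1}$ of \autoref{sec:prscrolls}.) The map $\rho_X$ is then the composite of $(a,b,c) \mapsto \rho^*(a,b,c)$ with the projectivization $\A^{3k+4}\setminus\{0\} \to \P^{3k+3} = |K_X + 3L|$.

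Finally I would deduce birationality. The first component of $\rho^*$ recovers $c$, and the second is governed by the linear map
\[ L_c \from \{f : \deg f \le k\} \oplus \{g : \deg g \le k\} \longrightarrow \{h : \deg h \le 2k+1\}, \qquad L_c(f,g) = t\, W(c,f) - W(c,g), \]
between spaces of the same dimension $2k+2$. The crux is that $L_c$ is an isomorphism for general $c$, which I would prove for squarefree $c$ with roots $\theta_1, \dots, \theta_{k+1}$ by partial fractions: writing $f/c = \sum_i \mu_i/(t-\theta_i)$ and $g/c = \sum_i \nu_i/(t-\theta_i)$, one computes
\[ L_c(f,g)/c^2 \;=\; t(f/c)' - (g/c)' \;=\; -\sum_i \frac{\mu_i}{t-\theta_i} \;+\; \sum_i \frac{\nu_i - \mu_i \theta_i}{(t-\theta_i)^2}, \]
which vanishes if and only if all $\mu_i = \nu_i = 0$, i.e.\ $f = g = 0$; so $L_c$ is injective, hence an isomorphism. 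Granting this, $\rho^*$ is generically injective: from a general point of its image one reads off $c$ and then $(a,b)$ as the unique $L_c$-preimage of the second component. Moreover the subsequent projectivization is generically injective on $\operatorname{im}\rho^*$, since two proportional points of $\operatorname{im}\rho^*$ would have proportional $c$-components, forcing the ratio to be $1$ as both are monic of degree $k+1$. Hence $\rho_X$ is a dominant, generically injective map between varieties of equal dimension over a field of characteristic zero, so it is birational.

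The main obstacle is the isomorphism claim for $L_c$, but the partial-fraction computation above shows it is quite tractable; the remaining work is bookkeeping --- tracking the $3$-dimensional redundancy absent from the threefold case, and keeping the various twists straight.
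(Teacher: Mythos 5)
Your proposal is correct and follows exactly the route the paper intends for this case (the paper explicitly omits the details, saying the surface case ``follows by similar calculations as in the proof of \autoref{prop:eccentric_threefold}''): an explicit affine chart on the Grassmannian, the determinantal formula \eqref{eqn:Rmatrix}, and a direct injectivity argument. Your handling of the extra wrinkle absent from the threefold case (normalizing $c$ monic and reducing $a,b$ modulo $c$ to kill the $3$-dimensional redundancy) is right, and your partial-fraction proof that $L_c$ is injective lets you conclude generic injectivity of $\rho^*$ in one step, which is in fact a small streamlining of the threefold template (which separately checks injectivity of $d\rho^*$ and connectedness of fibers).
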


\section{Maximal variation for generic scrolls}\label{sec:generic}
In this section, we establish that the projection-ramification map is generically finite (equivalently, dominant) for most scrolls, notwithstanding the examples provided by \autoref{thm:counterexamples}.
We begin by treating the cases of some particular scrolls by hand.
We then bootstrap these to more general results using degeneration arguments.

\subsection{Maximal variation for some particular cases}\label{sec:lowdegree}

Given an ample vector bundle $E$ on $\P^1$, we say that \emph{maximal variation holds for $E$} if the projection-ramification map is generically finite (equivalently, dominant) for $X = \P E$ embedded by the complete linear series associated to $L = \O_X(1)$.

\begin{proposition}\label{prop:segre}
  Maximal variation holds for $E = \O(1)^r$.
  In fact, the degree of the projection-ramification map in this case is $1$.
\end{proposition}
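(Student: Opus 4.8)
The plan is to make the differential construction of \autoref{prop:rdv} completely explicit for $E = \O(1)^r$ and then recognize the resulting map as a linear isomorphism of projective spaces. First I would set up coordinates exactly as at the end of \autoref{sec:prscrolls}: trivialize each summand of $E = \O(1)^r$ by a section $X_j$ over the affine chart $\A^1 = \spec\k[t]$, so that $H^0(\P^1, E)$ has basis $X_1, \dots, X_r, tX_1, \dots, tX_r$, giving $h^0(E) = 2r$. A general $(r+1)$-dimensional subspace $V \subset H^0(\P^1, E)$ can be put, after acting by $\Aut(\P E/\P^1)$ and reordering, into a normal form analogous to the one used in the proof of \autoref{prop:eccentric_threefold}: a basis $X_1 + a_1 X_r, \dots, X_{r-1} + a_{r-1} X_r, tX_1 + c_1 X_r, \dots$, or whatever the cleanest echelon form is here, with the free parameters being scalars (since $\O(b-a) = \O(0)$ when all $a_i = 1$, the entries $a_i, c_i$ are \emph{constants}, not polynomials — this is the key simplification over the eccentric case). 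The evaluation matrix $(m_{i,j})$ then has entries that are affine-linear in $t$ with the $t$-coefficients fixed and only the constant terms varying over the parameters.

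Next I would feed this matrix into the determinant formula \eqref{eqn:Rmatrix} for the ramification section $d_V = r_V$. Because $\partial m_{i,j}/\partial t$ is a \emph{constant} matrix for $E = \O(1)^r$ (each $m_{i,j}$ is affine in $t$), the last column of \eqref{eqn:Rmatrix} has entries linear in $t$ with fixed leading coefficients, and expanding the determinant shows that each coordinate of $\rho(V) \in H^0(\P^1, E \otimes \det E \otimes K_{\P^1}) = H^0(\P^1, \O(2)^r)$ — a space of dimension $3r$, so the target projective space has dimension $3r - 1 = \dim \Gr(r+1, 2r)$, consistent with \autoref{thm:minimaldegree} — depends \emph{affinely} on the parameters $(a_i, c_i)$. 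So, just as in \autoref{prop:eccentric_threefold}, $\rho$ is realized as $\pi \circ \rho^*$ where $\rho^* \from \A^{N} \to \A^{N+1}$ with $N = \dim\Gr$, and the degree computation splits into (i) showing $\rho^*$ is generically one-to-one onto its image $Z$, and (ii) showing the projectivization $\pi|_{Z}$ is generically one-to-one. For (i), since $\rho^*$ is affine-linear in the parameters I expect it to be \emph{injective} outright: I would write out the affine-linear system expressing the parameters in terms of the output coefficients and check it has a unique solution for generic output, exactly the syzygy-style argument of \autoref{prop:eccentric_threefold} but easier because there are no polynomial coefficients to track. For (ii), I would run the scaling argument: if $\lambda \cdot \rho^*(a,c)$ lies in $Z$ for $\lambda \neq 0, 1$, compare top-degree coefficients in the $t$-expansions to force a contradiction for generic $(a,c)$.

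The main obstacle I anticipate is purely bookkeeping: choosing the normal form for $V$ so that the determinant \eqref{eqn:Rmatrix} is transparent enough to read off affine-linearity and invert the resulting system in closed form for general $r$ (the proof of \autoref{prop:eccentric_threefold} only had to handle $r = 3$). One clean way to sidestep heavy computation is to argue more structurally: the differential construction \eqref{eqn:differential_construction} for $E = \O(1)^r$ identifies $d_V$ with a fixed bilinear pairing applied to $\det V$ and its first-order deformation in the $t$-direction, and the $\Aut(\P E/\P^1)$-equivariance plus a dimension count (source and target have equal dimension by \autoref{thm:minimaldegree}, and $\rho$ is dominant by, say, the threefold-type argument or by exhibiting one fiber) reduces the degree to the number of $V$ with a prescribed image; I would then show this number is $1$ by exhibiting the inverse construction explicitly — given a general section of $\O(2)^r$, reconstruct the unique $V$ whose ramification it is, by solving the same affine-linear equations. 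Either route delivers $\deg \rho = 1$; I would present whichever yields the shortest verification, most likely the explicit-coordinates route mirroring \autoref{prop:eccentric_threefold}.
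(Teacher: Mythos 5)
Your main computational shortcut does not hold, and it is the load-bearing step of the proposal. You claim that because each entry $m_{i,j}$ of the evaluation matrix is affine in $t$ with constant derivative, the coordinates of $\rho(V)$ depend \emph{affinely} on the Grassmannian parameters, so that $\rho^*$ is injective ``outright.'' But the ramification section is the determinant \eqref{eqn:Rmatrix} of an $(r+1)\times(r+1)$ matrix each of whose rows is affine in the parameters attached to the corresponding basis vector of $V$; by multilinearity in the rows the result has multidegree $(1,\dots,1)$ in these $r+1$ groups of parameters, i.e.\ total degree up to $r+1$ in the parameters, not $1$. You can already see this in the paper's $r=3$ computation \eqref{eqn:threefoldram}, where $\gamma$ is quadratic in $(a,b,c,d)$; what is affine there is only the system cutting out the \emph{fiber} over a fixed $(\alpha,\beta,\gamma)$, and only after the syzygy and leading-coefficient analysis has been carried out. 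Without affine-linearity, both halves of your plan --- injectivity of $\rho^*$ and the scaling argument for $\pi|_Z$ --- revert to the full analysis of \autoref{prop:eccentric_threefold}, now for arbitrary $r$, and you have not supplied it. There is also a bookkeeping error that propagates: for $E=\O(1)^r$ the target is $\P H^0(\P^1,\O(r-1)^r)^*$, of dimension $r^2-1=\dim\Gr(r+1,2r)$, not $\P H^0(\P^1,\O(2)^r)^*$ of dimension $3r-1$; the two agree only for $r=3$.

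The structural alternative you sketch in your last paragraph is essentially the right idea, and it is what the paper does --- but it should be pushed through on its own rather than falling back on dominance ``by the threefold-type argument.'' Here $\Aut(\P E/\P^1)=\PGL_r$ acts on both the source $\Gr(r+1,H^0(\P^1,\O(1)^r))$ and the target $\P H^0(\P^1,\O(r-1)^r)^*$, each of dimension $r^2-1=\dim\PGL_r$, and one checks (as in \autoref{prop:trivialStabilizer}, and directly on the target, where a point is an $r\times r$ matrix of coefficients up to scale) that each action has a unique open orbit with trivial generic stabilizer. Since $\rho$ is equivariant and carries the open orbit of the source onto the open orbit of the target, it is birational; this yields maximal variation and $\deg\rho=1$ simultaneously, with no coordinates, no inversion of equations, and no separate dominance step.
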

\begin{proof}
  We know that the projection-ramification map
  \[ \rho \from \Gr(r+1, H^0(\P^1, \O(1)^r)) \dashrightarrow \P H^0(\P^1, \O(r-1)^r)^*\]
  is $\Aut \P E$ equivariant.
  In this case, it is easy to check that the action of $\Aut (\P E / \P^1) = \PGL_r$ has a unique open orbit and trivial generic stabilizers on both the source and the target of $\rho$.
  Hence, $\rho$ must be birational.  
\end{proof}

\begin{proposition}\label{prop:222}
  Maximal variation holds for $E = \O(2)^r$.
\end{proposition}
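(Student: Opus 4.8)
The plan is to reduce maximal variation for $E = \O(2)^r$ to a statement about a generically finite map via a degeneration, using the fact (already available from \autoref{thm:minimaldegree}, i.e. \autoref{thm:actualminimaldegree}) that the source and target of $\rho$ have the same dimension, so it suffices to exhibit a single projection $V$ whose fiber under $\rho$ is zero-dimensional. Unlike the case $E = \O(1)^r$ (\autoref{prop:segre}), the automorphism group $\Aut(\P E/\P^1)$ is much smaller relative to the dimensions of the source and target, so the orbit argument is unavailable and we genuinely need to analyze the differential of $\rho$. The natural tool is the explicit matrix description \eqref{eqn:Rmatrix} of the ramification divisor for scrolls developed in \autoref{sec:prscrolls}: choosing a trivialization $X_1, \dots, X_r$ of $E$ over the standard affine chart $\spec \k[t] \subset \P^1$, a general $(r+1)$-dimensional subspace $V \subset H^0(\P^1, \O(2)^r)$ projects isomorphically onto a general $(r+1)$-plane, and can be put in a normal form analogous to the one used in the proof of \autoref{prop:eccentric_threefold}, with entries $m_{i,j} \in \k[t]$ of degree at most $2$.

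First I would write down this normal form explicitly and compute $\rho(V)$ via the determinant in \eqref{eqn:Rmatrix}, obtaining a section of $E \otimes \det E \otimes K_{\P^1} = \O(2r-1)^r$ whose components are polynomials in $t$ of controlled degree. This gives a polynomial map $\rho^* \from \A^N \to \A^{N+1}$ (after a choice of scaling) exactly as in \autoref{prop:eccentric_threefold}, with $N = \dim \Gr(r+1, 2r)$. Second, I would compute the differential $d\rho^*$ at a general point by the standard $\epsilon$-perturbation: the linearized equations will express $\hat\alpha_j$, etc., as linear combinations of the perturbations $\hat m_{i,j}$, and the vanishing of all of them should force, via a coprimality/syzygy argument on the relevant maximal minors $M_\ell$ of $(m_{i,j})$ — which are coprime for generic $V$ since $E \otimes \O(-1)$ is globally generated — that all the $\hat m_{i,j}$ vanish. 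Third, I would handle the scaling ambiguity: show that for a general point $z$ in the image $Z$ of $\rho^*$ and a scalar $\lambda \neq 0, 1$, the point $\lambda z$ is not in $Z$, by comparing top-degree coefficients exactly as in \autoref{prop:eccentric_threefold}. Together these show $\rho$ is generically finite, hence (by the equality of dimensions) dominant.

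The main obstacle I expect is the bookkeeping in the syzygy argument for the differential: for $r = 3$ the two relevant minors are a coprime pair of polynomials and the argument is clean, but for general $r$ there are $r$ minors $M_1, \dots, M_r$, and one must argue that a linear relation $\sum_i M_i N_i = 0$ with the $N_i$ of bounded degree forces all $N_i = 0$ for generic choice of the $m_{i,j}$ — this is a genericity statement about the module of syzygies of a general tuple of forms of degree $2(r-1)$ on $\P^1$, which should follow from a dimension count plus the base-point-freeness of $E \otimes \O(-1)$, but needs care to set up. An alternative, possibly cleaner, route is to bypass the hands-on computation and instead degenerate $E = \O(2)^r$ isotrivially or via a one-parameter family to a scroll (or reducible limit) for which maximal variation is already known — e.g. using \autoref{cor:maxvariation3scrolls} for $r = 3$ and an induction or a limit linear series argument for higher $r$ — and then invoke upper semi-continuity of fiber dimension; but since $\O(2)^r$ does not degenerate to $\O(1)^r$ (the degree is different), one would need a genuinely different degeneration target, which is why I expect the direct computation to be the more reliable path.
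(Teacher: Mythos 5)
Your overall strategy (exhibit one point where the differential of $\rho$ has full rank, then use the equality of source and target dimensions) is the same as the paper's, but the crucial step of your plan --- injectivity of $d\rho^*$ at a \emph{general} point via a syzygy argument on the maximal minors --- is exactly where the proposal has a genuine gap; you have correctly located the obstacle but not resolved it, and as formulated it cannot be resolved. Two concrete problems. First, the linearization of the determinant \eqref{eqn:Rmatrix} is not of the form $\sum_\ell M_\ell N_\ell$ with $N_\ell = \partial_t \hat m_{\ell,j}$: perturbing $m_{i,j} \mapsto m_{i,j} + \epsilon \hat m_{i,j}$ also perturbs each minor, producing cross terms $\sum_\ell \hat M_\ell\,\partial_t m_{\ell,j}$ in which every $\hat m_{i,k}$ appears multiplied by $(r-1)$-fold products of the unperturbed entries. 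In \autoref{prop:eccentric_threefold} these cross terms could be absorbed because the normal form left only one nontrivial column of unknowns; for $\O(2)^r$ with $r$ large they cannot be ignored. Second, even the simplified statement you propose to prove --- that a relation $\sum_\ell M_\ell N_\ell = 0$ with $\deg N_\ell \le 2$ forces all $N_\ell = 0$ for generic $(m_{i,j})$ --- is false: the maximal minors of any $(r+1)\times r$ matrix satisfy the Hilbert--Burch row syzygies $\sum_\ell m_{\ell,j}\,M_\ell = 0$, whose coefficients here have degree exactly $2$. These syzygies correspond to the trivial directions (change of basis of $V$), and one would have to show that, inside your chosen chart and after the cross terms are accounted for, they exhaust the kernel; nothing in the sketch does this, and it is not a routine dimension count.

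The paper sidesteps all of this by evaluating the differential not at a general point but at the special split configuration in which $r$ of the $r+1$ sections are $v_i = (x-a_i)^2 X_i$ and the last is a single generic section $\sum p_i X_i$. There the minors become explicit products of the $(x-a_j)^2$ and the $p_j$, the image of each block of perturbations can be computed in closed form, and one proves \emph{surjectivity} of the differential onto all of $H^0(\P^1, E \otimes \O(2r-2))$ --- which also renders your third step (the comparison of top-degree coefficients to handle scaling) unnecessary. If you complete the write-up, adopt such a degenerate base point rather than a general one. Your instinct that the degeneration alternative fails is correct: the only bundles of rank $r$ and degree $2r$ to which $\O(2)^r$ specializes are the less balanced ones such as $\O(1)^{r-1}\oplus\O(r+1)$, which for $r \ge 4$ are precisely the counterexamples of \autoref{prop:specialE}, so semicontinuity runs in the wrong direction.
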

Compared to \autoref{prop:segre}, our proof of \autoref{prop:222} is significantly more involved, and does not yield the degree.
\begin{proof}
  We exhibit a point $\Gr(r+1, H^0(\P^1, E))$ at which $\rho$ is defined, and at which the induced map $d\rho$ on the tangent space is non-singular.
  It follows that $\rho$ is a local isomorphism at this point, and hence dominant overall.

  Our proof is by direct calculation.
  We calculate on $\A^1 = \spec \k[x] \subset \P^1$ and identify $\O(n)$ with $\O(n \cdot \infty)$.
  Then the global sections of $\O(n)$ are identified with polynomials in $x$ of degree at most $n$.
  Denote the generator of the $i$th summand of $E(-2)$ by $X_i$.
  Consider the point of $\Gr(r+1, H^0(\P^1, E))$ represented by the vector space $V \subset H^0(\P^1, E)$ spanned by the $(r+1)$ sections $v_1, \dots, v_{r+1}$ defined as follows.
  Set $v_i = (x-a_i)^2 X_i$ for $0 \leq i \leq r-1$, and $v_r = \sum p_i X_i$, where $a_i \in \k$, and $p_j \in H^0(\P^1, \O(2))$ are generic.
  By \eqref{eqn:Rmatrix}, the ramification divisor associated to $V$ is cut out by the determinant of the matrix
  \[
    M =
    \begin{pmatrix}
      (x-a_1)^2 & 0 &  \cdots & 0 & 2(x-a_1)X_1\\
      0 & (x-a_2)^2 & \cdots & 0 & 2(x-a_2)X_2\\
      0 & 0 & \ddots & 0 & \vdots\\
      0 & 0 & \cdots & (x-a_r)^2& 2(x-a_r)X_r\\
      p_1 & p_2 & \cdots & p_r & \sum p_i' X_i
    \end{pmatrix}.
  \]
  We leave it to the reader to check that $R = \det M$ is not identically zero.

  To do the tangent space computation, we choose elements $w_i \in H^0(\P^1, E)$, and change $v_i$ to $v_i + \epsilon w_i$, where $\epsilon^2 = 0$.
  Let $R_\epsilon$ be the equation of the discriminant of the projection given by $V_\epsilon \subset H^0(\P^1, E) \otimes \k[\epsilon]/\epsilon^2$, where $V_\epsilon$ is spanned by $v_1 + \epsilon w_1, \dots, v_{r+1} + \epsilon w_{r+1}$.
  Concretely, $R_\epsilon$ is the determinant of a matrix $M_\epsilon$ given by \eqref{eqn:Rmatrix}, which reduces to $M$ modulo $\epsilon$.
  Note that $R_\epsilon$ is an element of $H^0(\P^1, E \otimes \O(2r-2)) \otimes \k[\epsilon]/\epsilon^2$, and we have
  \[ R_\epsilon =  R + \epsilon S(w_1, \dots, w_{r+1}),\]
  for some $S(w_1, \dots, w_{r+1}) \in H^0(\P^1, E \otimes \O(2r-2))$.
  Furthermore, the map
  \begin{equation}\label{eqn:mainmap}
    S \from H^0(\P^1, E)^{r+1} \to H^0(\P^1, E \otimes \O(2r-2))
  \end{equation}
  is a linear map.
  To show that $d \rho$ is non-singular at $V$, it suffices to show that $S$ is surjective.
  For $1 \leq i \leq r$ and $1 \leq j \leq r+1$, let $E_{i,j} \in H^0(\P^1, E)^{r+1}$ be the element corresponding to $(w_1, \dots, w_{r+1})$ where $w_j = X_i$ and $w_\ell = 0$ for all $\ell \neq j$.
  For $i \neq j$ and $1 \leq j \leq r$ and $q \in H^0(\P^1, \O(2))$, by direct calculation we get
  \[ S\left(qE_{i,j}\right) = \frac{(x-a_1)^2 \cdots (x-a_r)^2p_j}{(x-a_i)^2(x-a_j)^2} \cdot [q, (x-a_i)^2] \cdot X_i,\]
  where the notation $[a,b]$ means $a'b-ab'$.
  Similarly, we get
  \[ S\left(qE_{i,r+1}\right) = - \frac{(x-a_1)^2 \cdots (x-a_r)^2}{(x-a_i)^2} \cdot [q, (x-a_i)^2] \cdot X_i,\]
  and
  \begin{equation}\label{eqn:diag}
    S\left(qE_{i,i} \right) = \det M_i,
  \end{equation}
  where $M_i$ is obtained from $M$ by changing the $(i,i)$-th entry from $(x-a_i)^2$ to $q$ and the $(i,r+1)$-th entry from $2(x-a_i)X_i$ to $q'X_i$.

  Fix an $i$ with $1 \leq i \leq r$, and consider the subspace $W_i \subset H^0(\P^1, E)^{r+1}$ spanned by $q E_{i,j}$ for $j \neq i$.
  By our calculations above, $S$ maps $W_i$ to the subspace of $H^0(\P^1, E \otimes \O(2r-2))$ spanned by $H^0(\P^1, \O(2r)) \otimes X_i$.
  We begin by identifying $S(W_i)$.

  For $1 \leq j \leq r$ and $j \neq i$, set
  \[
    Q_{i,j} = \frac{(x-a_1)^2 \cdots (x-a_r)^2p_j}{(x-a_i)^2(x-a_j)^2}, 
  \]
  and
  \[
    Q_{i,r+1} = - \frac{(x-a_1)^2 \cdots (x-a_r)^2}{(x-a_i)^2}.
  \]
  We claim that, there is no non-trivial linear relation among the $r$ polynomials $Q_{i,j}$ for $j \in \{1, \dots, r+1\} \setminus \{i\}$.
  Indeed, suppose we had a linear relation
  \[ \sum l_j Q_{i,j} = 0,\]
  then dividing throughout by $\frac{(x-a_1)^2\cdots (x-a_r)^2}{(x-a_i)^2}$ gives the relation
  \[ \sum_{j = 1}^r l_j \frac{p_j}{(x-a_j)^2} + l_{r+1} = 0.\]
  If $l_j \neq 0$ for some $j$ with $1 \leq j \leq r$, then we have a pole on the left side at $x = a_j$, but not on the right side (note that $(x-a_j)$ does not divide $p_j$ by the genericity of $p_j$).
  Therefore, we must have $l_j = 0$ for all $j$, and hence also $l_{r+1} = 0$.
  Consider the map
  \begin{equation}\label{eqn:big}
    H^0(\P^1, \O(1)) \otimes \langle  Q_{i,j} \mid j \in \{1, \dots, r+1\} \setminus \{i\}\rangle \to H^0(\P^1, \O(2r-1)).
  \end{equation}
  We just saw that this map is injective.
  But both sides have the same dimension, and hence the map must be surjective.
  Finally, it is easy to see that the image of the map
  \begin{equation}\label{eqn:q}
    H^0(\P^1, \O(2)) \to H^0(\P^1, \O(2)), \quad q \mapsto [q, (x-a_i)^2]
  \end{equation}
  is $(x-a_i)\cdot H^0(\P^1, \O(1))$.
  By \eqref{eqn:big} and \eqref{eqn:q}, we conclude that the image of the map
  \[ S \from W_i = \langle qE_{i,j} \mid j \in\{1, \dots, r+1\} \setminus \{i\} \to H^0(\P^1, \O(2r)) \otimes X_i\]
  is $(x-a_i)H^0(\P^1, \O(2r-2)) \otimes X_i$.
  In other words, the cokernel of the map is $\k \otimes X_i$ where the map
  \[H^0(\P^1, \O(2r)) \otimes X_i \to \k \otimes X_i \]
  is given by evaluation at $a_i$.
  Putting together the maps for various $i$, we see that the cokernel of the map
  \[ S \from \bigoplus_i W_i \to H^0(\P^1, E \otimes \O(2r-2)) = H^0(\P^1, \O(2r)) \otimes \langle  X_1, \dots, X_r \rangle\]
  is $\k \otimes \langle  X_1, \dots, X_r \rangle$, where the map
  \begin{equation}\label{eqn:partialsur}
    H^0(\P^1, E \otimes \O(2r-2)) = H^0(\P^1, \O(2r)) \otimes \langle  X_1, \dots, X_r \rangle \to \k \otimes \langle  X_1, \dots, X_r \rangle
  \end{equation}
  on $H^0(\P^1, \O(2r)) \otimes X_i$ is given by evaluation at $a_i$.

  To show that $S$ is surjective, it is now enough to show that the map
  \begin{equation}\label{eqn:remainsur}
    H^0(\P^1, \O(2)) \otimes \langle  qE_{i,i} \mid i \in \{1, \dots, r+1\} \rangle \to \k \otimes \langle  X_1, \dots, X_r \rangle
  \end{equation}
  obtained by composing \eqref{eqn:mainmap} and \eqref{eqn:partialsur} is surjective.
  Recall from \eqref{eqn:diag} that we have $S(qE_{i,i}) = \det M_i$, where $M_i$ is obtained from $M$ by changing the $(i,i)$-th entry to $q$ and the $(i, r+1)$-th entry to $q'X_i$.
  Taking $q = (x-a_i)$ gives
  \[ S(qE_{i,i}) = \det M_i = \pm \prod_{j \neq i} (a_i-a_j)^2 p_i(a_i) X_i,\]
  which is a non-zero multiple of $X_i$.
  That is, the images of $(x-a_i)E_{i,i}$ under $S$ span $\k \otimes \langle  X_1, \dots, X_r \rangle$, and hence the map in \eqref{eqn:remainsur} is surjective.
  The proof is now complete.
\end{proof}

Our next goal is to bootstrap from \autoref{prop:segre} and \autoref{prop:222} to deduce maximal variation for generic scrolls of sufficiently high degree.
We do this by a degeneration argument.
We degenerate a vector bundle $E$ to a vector bundle $E_0$ on the nodal rational curve $P_0 = \P^1 \cup \P^1$, and show that the projection-ramification map for $E_0$ is dominant.
For this to work, we have to define the projection-ramification map for nodal curves.
It turns out that with the most na\"ive definition of linear series on scrolls on nodal curves, we do not get a dominant projection-ramification map.
As a remedy, we work with the (linked) limit linear series of higher rank as developed in \cite{tei-i-big:91} and \cite{oss:14}.
In the literature, there are a few different versions of the notion of a limit linear series.
We use \cite{oss:14} for the foundations of the theory, and following the terminology there, call our limit linear series linked linear series.

\subsection{Linked linear series}\label{sec:lls}
We need linked linear series for the simplest singular curve, namely a (projective, connected) nodal curve $C$ which is the nodal union of two smooth (projective, connected) curves $C_1$ and $C_2$, but we need them for vector bundles of rank higher than $1$.
Let $B$ be the spectrum of a DVR with special point $0$, general point $\eta$.
Let $\pi \from X \to B$ be a smoothing of $C$ with non-singular total space $X$.
That is, $\pi$ is a flat, proper, family of connected curves, smooth over $\eta$, and isomorphic to $C$ over $0$.
Such a family is a particularly simple example of an almost local smoothing family \cite[\S~2.1--2.2]{oss:14}.
Let $g_i$ be the genus of $C_i$ for $i = 1, 2$, and $g = g_1+g_2$ the genus of $X_\eta$.

Let $E$ be a vector bundle of rank $r$ on $C$.
The \emph{multi-degree} of $E$ is the pair of integers $(\deg E|_{C_1}, \deg E|_{C_2})$.
The \emph{degree} or \emph{total degree} of $E$ is the sum $\deg E = \deg E|_{C_1} + \deg E|_{C_2}$.

Once and for all, fix a vector bundle $\mathcal E$ of rank $r$ on $X$, and set $E = \mathcal E|_C$.
Let $E$ have degree $d$ and multi-degree $(w_1, w_2)$.
Fix a positive integer $k$.
Our next task is to recall the definition of the space of linked linear series of dimension $k$.
It will be a $B$-scheme whose fiber over $\eta$ is the Grassmannian $\Gr(k, H^0(X_\eta, \mathcal E_\eta))$.
The key idea is to not only consider the sections of $\mathcal E$, but also of its various twists, namely the vector bundles obtained by tensoring with the powers of $\O_X(C_i)$.

Fix maps $\theta_1 \from \O_X \to \O_X(C_1)$ and $\theta_2 \from \O_X \to \O_X(C_2)$.
The choice of these maps is auxiliary, and each one is unique up to multiplication by an element of $\O_B^*$.
For $n \in \Z$, set
\[ \mathcal E_n =
  \begin{cases}
    \mathcal E \otimes \O_X(C_1)^{\otimes n} & \text{if $n \geq 0$},\\
    \mathcal E \otimes \O_X(C_2)^{\otimes (-n)}  & \text{if $n < 0$}.
  \end{cases}
\]
The maps $\theta_1$ and $\theta_2$ induces maps
\[ \theta_n \from \mathcal E_m \to \mathcal E_{m+n}\]
given by
\[
  \theta_n = 
  \begin{cases}
    \theta_1^n & \text{if $n \geq 0$,} \\
    \theta_2^{-n} & \text{if $n < 0$.}
  \end{cases}
\]
Note that the multi-degree of $\mathcal E_n$ is $(w_1 - nr, w_2 + nr)$.
In particular, for sufficiently negative $n$, say for $n \leq n_1$, we have $H^0(C_2, \mathcal E_n|_{C_2}) = 0$, and similarly, for sufficiently positive $n$, say $n \geq n_2$, we have $H^0(C_1, \mathcal E_n|_{C_1}) = 0$.
Assume, without loss of generality, that $n_2 \geq n_1$.
Set
\[ d_1 = w_1 - n_1 r, \text{ and } d_2 = w_2 + n_2 r, \text{ and } b = n_2 - n_1.\]
Observe that
\[ d_1 + d_2 - rb = d.\]

\begin{definition}[linked linear series]
  \label{def:lls}
  Let $S$ be a $B$-scheme.
  A \emph{$k$-dimensional linked linear series} on $\mathcal E_S$ consists of sub-bundles $V_n \to \pi_* (\mathcal E_n)_S$ of rank $k$ for every $n \in \Z$ satisfying the following compatibility condition.
  For every $m, n \in \Z$, the map
  \begin{equation}\label{lls:compatibility}
    \pi_* \theta_n \from \pi_* (\mathcal E_m)_S \to \pi_* (\mathcal E_{m+n})_S \text{ maps } V_m \to V_{m+n}.
  \end{equation}
\end{definition}
\autoref{def:lls} is a special case of \cite[Definition~3.3.2]{oss:14}.
From now on, we will talk about the image of an element in $V_m$ in $V_{m+n}$; this should be understood as the image under the map $\pi_* \theta_n$.

\begin{remark}
The notion of a sub-bundle of a push-forward is a bit subtle; it is treated in depth in \cite[Definition~B.2.1]{oss:14}.
We recall the main points.
For a flat proper morphism $X \to S$ and a vector bundle $\mathcal E$ on $S$, a \emph{sub-bundle} of $\pi_* \mathcal E$ is a vector bundle $V$ on $S$ along with a map $i \from V \to \pi_* \mathcal E$ such that for every $T \to S$, the pull-back $i_T \from V_T \to \pi_* (\mathcal E_T)$ is injective.
Note that this is a local condition on $S$.
For Noetherian schemes such as ours, it is enough to check this condition for the $T \to S$ that are inclusions of closed points.
Alternatively, if $F_0 \to F_1 \to \cdots $ is a complex of vector bundles on $S$ quasi-isomorphic to $R\pi_* \mathcal E$, then a sub-bundle of $\pi_* \mathcal E$ is a vector bundle $V$ along with a map $i \from V \to \pi_* \mathcal E$ such that the composite $V \to F_0$ is an injection of vector bundles (that is, the dual map is surjective).
\end{remark}

\begin{remark}
\autoref{def:lls} defines linked linear series on a particular vector bundle $\mathcal E$.
We can also vary the choice of the vector bundle, as is done in \cite{oss:14}; in that case, one imposes an additional vanishing condition on the vector bundles to ensure boundedness of the moduli space of linked linear series.
\end{remark}

\begin{definition}[Simple linked linear series]
  \label{def:simple_lls}
Let $S = \spec K$, where $K$ is a field, and let $V = (V_n \mid n \in \Z)$ be a linked linear series on $S$.
We say  $V$ is \emph{simple} if there exist integers $w_1, \dots, w_k$, not necessarily distinct, and elements $v_i \in V_{w_i}$ such that for every $w \in \Z$, the images of $v_1, \dots, v_k$ in $V_w$ form a basis of $V_w$.
\end{definition}

Note that if $S \to B$ maps to the generic point $\eta$, then the data of a linked linear series $V = (V_n)$ is equivalent to the data of an individual $V_n$ for any $n \in \Z$, and in particular, for $n = 0$.
As a result, the functor that associates to $S \to \eta$ the set of $k$-dimensional linked linear series of $\mathcal E_S$ is represented by the Grassmannian $\Gr(k, H^0(X_\eta, \mathcal E_\eta))$.
The main theorem of \cite{oss:14} is the following representability theorem.
\begin{theorem}[{\cite[Theorem~3.4.7]{oss:14}}]
  \label{thm:lls}
  The functor that associates to a $B$-scheme $S \to B$ the set of linked linear series on $\mathcal E_S$ is representable by a projective $B$-scheme $\mathcal G(k, \mathcal E)$ isomorphic to the Grassmannian $\Gr(k, H^0(X_\eta, \mathcal E_\eta))$ over $\eta$.
  The locus of simple linear series ${\mathcal G}^{\rm simple}(k, \mathcal E) \subset {\mathcal G}(k, \mathcal E)$ is an open subscheme, and the map ${\mathcal G}^{\rm simple}(k, \mathcal E) \to B$ has universal relative dimension at least $k(d-k-r(g-1))$.
\end{theorem}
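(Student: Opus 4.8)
The plan is to construct $\mathcal G(k, \mathcal E)$ as a closed subscheme of a product of relative Grassmannians over $B$, and then to control its local structure precisely enough to read off the dimension bound; this follows the approach of \cite{oss:14}. The first step is to observe that a linked linear series is determined by finitely much data. For $n \geq n_2$ the map $\theta_1 \from \mathcal E_n \to \mathcal E_{n+1}$ is injective with cokernel $\mathcal E_{n+1}|_{C_1}$, and $\pi_*$ of that cokernel is the skyscraper $H^0(C_1, \mathcal E_{n+1}|_{C_1})$ at $0 \in B$, which vanishes by the defining property of $n_2$; hence $\pi_* \theta_1 \from \pi_* \mathcal E_n \to \pi_* \mathcal E_{n+1}$ is an isomorphism for $n \geq n_2$, and symmetrically $\pi_* \theta_2 \from \pi_* \mathcal E_n \to \pi_* \mathcal E_{n-1}$ is an isomorphism for $n \leq n_1$. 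Using $\pi_* \theta_2 \circ \pi_* \theta_1 = t \cdot \id$, one deduces that in any linked linear series the $V_n$ with $n \notin [n_1, n_2]$ are forced to equal the appropriate images of $V_{n_1}$ and $V_{n_2}$, and that every compatibility condition reduces to one internal to $[n_1, n_2]$. So the functor of linked linear series on $\mathcal E_S$ is the functor of chains $V_{n_1} \to \cdots \to V_{n_2}$ of rank-$k$ sub-bundles of $\pi_*(\mathcal E_{n_1})_S, \dots, \pi_*(\mathcal E_{n_2})_S$ compatible with $\pi_* \theta_1$ and $\pi_* \theta_2$.

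Next I would prove representability. Since $\pi$ is flat and proper with one-dimensional fibers, after base change each $R\pi_* \mathcal E_n$ is represented by a two-term complex $[G_0^{(n)} \to G_1^{(n)}]$ of vector bundles on $B$; as recalled above, a rank-$k$ sub-bundle of $\pi_*(\mathcal E_n)_S$ is the same as a rank-$k$ sub-bundle of $(G_0^{(n)})_S$ whose composite to $(G_1^{(n)})_S$ vanishes, and this vanishing is a closed condition on the relative Grassmannian $\Gr(k, G_0^{(n)}) \to B$. Thus the functor of rank-$k$ sub-bundles of $\pi_* \mathcal E_n$ is representable by a projective $B$-scheme, and $\mathcal G(k, \mathcal E)$ is the closed subscheme of the fiber product over $B$ of these, for $n_1 \leq n \leq n_2$, cut out by the further closed conditions that the composites $\mathcal V_n \to \pi_* \mathcal E_{n\pm 1}/\mathcal V_{n \pm 1}$ induced by $\pi_* \theta_1, \pi_* \theta_2$ vanish. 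This is projective over $B$. Over $\eta$ the $\theta_i$ are isomorphisms, so the whole chain is recovered from $V_0$ and $\mathcal G(k, \mathcal E)_\eta \cong \Gr(k, H^0(X_\eta, \mathcal E_\eta))$. Finally, simpleness over a field asks for a single frame inducing a basis of every $V_n$, which amounts to maximality of the rank of an explicit map assembled from the $V_n$ and the $\theta_i$; this is an open condition, so $\mathcal G^{\rm simple}(k, \mathcal E)$ is open and still equals $\Gr(k, H^0(X_\eta, \mathcal E_\eta))$ over $\eta$.

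It remains to prove the dimension bound, which is the heart of the matter. On the generic fiber $\dim \Gr(k, H^0(X_\eta, \mathcal E_\eta)) = k(h^0(X_\eta, \mathcal E_\eta) - k) \geq k(\chi(\mathcal E_\eta) - k) = k(d - r(g-1) - k) =: N$, by Riemann--Roch and $h^0 \geq \chi$; the content is that every component of $\mathcal G^{\rm simple}(k, \mathcal E)$, including any contained in the special fiber, has fiber dimension at least $N$ over every point of $B$, stably under base change. I would show that, Zariski-locally around a simple point, $\mathcal G(k, \mathcal E)$ is cut out inside a $B$-smooth scheme of the correct relative dimension by exactly the expected number of equations; the generalized principal ideal theorem then bounds the codimension of every component through that point by that number, which gives relative dimension $\geq N$, and the bound is insensitive to base change because the defining equations live over $B$. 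Concretely this rests on the local structure theory of chains of linked Grassmannians: one identifies the Zariski tangent space at a simple point with the deformations of the chain commuting with the $\theta_i$, checks that the obstruction is controlled so that the local ideal has the expected number of generators, and in fact shows that the simple locus is smooth over $B$ of relative dimension $N$ (the ambient linked Grassmannian being Cohen--Macaulay of the expected codimension).

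The genuinely difficult step is the last one. The representability in the first two steps is essentially formal once the two-term-complex description of the push-forwards is available, and the generic fiber and the openness of the simple locus are routine; but the dimension lower bound does not follow from any naive parameter count, and its proof requires the delicate local analysis of the equations defining a chain of linked Grassmannians.
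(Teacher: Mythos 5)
This theorem is not proved in the paper at all: it is quoted verbatim from Osserman \cite[Theorem~3.4.7]{oss:14}, so there is no internal proof to compare your sketch against. Measured against Osserman's actual argument, your outline has the right architecture: the reduction to the finite range $[n_1,n_2]$ via vanishing of $H^0$ on the twisted restrictions, representability via two-term complexes $[G_0 \to G_1]$ computing $R\pi_*\mathcal E_n$ together with closed linking and vanishing conditions inside a product of relative Grassmannians, the identification of the generic fiber with $\Gr(k, H^0(X_\eta, \mathcal E_\eta))$ because the $\theta_i$ are isomorphisms there, and the recognition that the dimension bound is the hard step and rests on the local structure of linked Grassmannians combined with the generalized principal ideal theorem.

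Two caveats. First, your parenthetical claim that the simple locus is \emph{smooth} over $B$ of relative dimension exactly $k(d-k-r(g-1))$ is both stronger than the theorem and false in general: the statement asserts only a lower bound on the universal relative dimension, and the way the theorem is used later in the paper (\autoref{prop:degeneration}, where one must separately verify $\dim_v \mathcal U_0 = (r+1)(d-rg-1)$ before concluding anything) presupposes that components of the special fiber can have strictly larger dimension; smoothness of the expected relative dimension would make that hypothesis vacuous. The correct mechanism is the one you state just before the overclaim: near a simple point the scheme is cut out by at most the expected number of equations inside a scheme of controlled dimension, so Krull's height theorem gives the lower bound, with no smoothness assertion. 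Second, ``universal relative dimension at least $n$'' is a specific technical notion in \cite{oss:14}, designed to survive arbitrary base change and to yield openness of the map when paired with an upper bound; your phrase ``stably under base change'' gestures at it, but the sketch does not engage with the definition, and the entire local analysis of linked Grassmannians (exact points, reducedness, Cohen--Macaulayness, flatness) is black-boxed. As a reconstruction of the strategy the sketch is sound; as a proof it defers precisely the part that makes the theorem nontrivial.
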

The last statement implies that if $v \in {\mathcal G}^{\rm simple}$ is such that ${\mathcal G}^{\rm simple}$ has relative dimension at most $k(d-k-r(g-1))$ at $v$, then it has relative dimension exactly $k(d-k-r(g-1))$ at $v$ and, furthermore, it is an open map near $v$.
In particular, $v$ is in the closure of $\Gr(k, H^0(X_\eta, \mathcal E_\eta))$.
\begin{remark}
  Osserman proves a stronger theorem, namely a relative version of the statement above, over the stack of vector bundles on $X$.
  But the statement above is enough for our purposes.  
\end{remark}

Although the definition of a linked linear series demands that we specify infinitely many vector bundles $V_n$, one for each $n \in \Z$, this is neither practical nor necessary.
In the best case, only specifying the extremal ones, namely $V_{n_1}$ and $V_{n_2}$, suffices, provided that they satisfy some compatibility conditions.
The original definition of limit linear series due to Eisenbud--Harris \cite{eis.har:86,eis.har:84} in the rank 1 case and Teixidor i Bigas \cite{tei-i-big:91} in the general case, took this minimalist approach.

Let $E_n$ be the restriction of $\mathcal E_n$ to the central fiber $C = X_0$, and set $p = C_1 \cap C_2$.
\begin{definition}[EHT limit linear series]
  \label{def:eht}
  A \emph{$k$-dimensional EHT limit linear series} on $E$ consists of $k$-dimensional subspaces $W_i \subset H^0(C_i, E_{n_i}|_{C_i})$ for $i = 1, 2$ that satisfy the following two conditions.
  \begin{enumerate}
  \item
    \label{ieq:eht}
    If $a^i_1 \leq \cdots \leq a^i_k$ is the vanishing sequence for $(\mathcal E_{n_i}|_{C_i}, W_i)$ at $p$ for $i = 1, 2$, then for every $v = 1, \dots, k$ we have
    \[ a^1_v + a^2_{k+1-v} \geq b.\]
  \item\label{gluing:eht}
    There exist bases $s^i_1, \dots, s^i_k$ for $W_i$ for $i = 1, 2$, such that $s^i_v$ has order of vanishing $a^i_v$ at $p$, and if we have $a^1_v + a^2_{k+1-v} = b$ for some $v$, then
    \[ \widetilde \phi (s^1_v) = s^2_{k+1-v},\]
    where $\widetilde \phi \from E_{n_1}(-a^1_{v}\cdot p)|_p \to E_{n_2}(-a^2_{k+1-v} \cdot p) |_p$ is the isomorphism obtained by taking the appropriate twist of the identity map.
  \end{enumerate}
  We say that $(W_1, W_2)$ is a \emph{refined EHT limit linear series} if equality holds in \eqref{ieq:eht} for all $v = 1, \dots, k$.
\end{definition}
This definition is adapted from \cite[Definition~4.1.2]{oss:14}.
Note that, due to the vanishing condition on the twists of $E$, the restriction map
\[ H^0(C, E_{n_i}) \to H^0(C_i, E_{n_i}|_{C_i})\]
is an injection.
Via this injection, we sometimes treat $W_i$ as a subspace of $H^0(C_i, E_{n_i}|_{C_i})$.

Although the notions of a linked linear series and an EHT limit linear series differ in general, they essentially agree when we restrict to the simple linked linear series and the refined EHT limit linear series.
More precisely, we have the following statement.
\begin{proposition}\label{prop:llseht}
  Let $S$ be a $B$-scheme, and $V = (V_n \mid n \in \Z)$ a linked linear series on $\mathcal E_S$.
  For every $s \in S$ over $0 \in B$, taking $W_i = V_{n_i}|_s$ for $i = 1, 2$ gives an EHT limit linear series.
  Conversely, assume that $S$ reduced, and let $\mathcal W_i \subset \pi_*(\mathcal E_{n_i})_S$ for $i = 1,2$ be sub-bundles whose restrictions to every $s \in S$ over $\eta \in B$ agree under the isomorphism $(\mathcal E_{n_1})_\eta \cong (\mathcal E_{n_2})_\eta$, and to every $s \in S$ over $0 \in B$ define a refined EHT limit linear series.
  Then there exists a unique linked linear series $V = (V_n \mid n \in \Z)$ on $\mathcal E_S$ such that $\mathcal W_i = V_{n_i}$.
  Furthermore, for every $s \in S$ over $0$, the series $V|_s$ is simple.
\end{proposition}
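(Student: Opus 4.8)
The engine behind both directions is the chain
\[
  \pi_*(\mathcal E_{n_1})_S \xrightarrow{\ \pi_*\theta_1\ } \pi_*(\mathcal E_{n_1+1})_S \xrightarrow{\ \pi_*\theta_1\ } \cdots \xrightarrow{\ \pi_*\theta_1\ } \pi_*(\mathcal E_{n_2})_S,
\]
each arrow of which becomes an isomorphism after base change to $\eta$, together with the fibre-wise dictionary between \emph{simple} linked linear series and \emph{refined} EHT limit linear series contained in \cite[\S 4]{oss:14} (and, in rank one, going back to Eisenbud--Harris). I plan to treat the two directions separately and to quote the fibre-wise bookkeeping from these sources wherever possible.

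\textbf{Forward direction.} Fix $s \in S$ over $0 \in B$ and let $W_i$ be the image of $V_{n_i}|_s \subset H^0(C, E_{n_i})$ under restriction to $C_i$. The first step is that the restriction map $H^0(C, E_{n_i}) \to H^0(C_i, E_{n_i}|_{C_i})$ is injective: its kernel is the space of sections vanishing on $C_i$, hence equals $H^0(C_{i'}, E_{n_i}|_{C_{i'}}(-p))$ for the complementary component $C_{i'}$, and this sits inside $H^0(C_{i'}, \mathcal E_{n_i}|_{C_{i'}})$, which vanishes by the choice of $n_1$ when $i=1$ and of $n_2$ when $i=2$. Hence $\dim W_i = k$ and $W_i$ is a genuine $k$-dimensional subspace of $H^0(C_i, E_{n_i}|_{C_i})$. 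The vanishing inequality and the gluing condition of \autoref{def:eht} then follow by recording orders of vanishing at the node $p$ along the restriction of the chain above to $s$: if $v \in V_{n_1}|_s$ vanishes to order $a$ along $C_1$ at $p$, then its image in $V_{n_2}|_s$ under $\pi_*\theta_1^{\,b}$ restricts on $C_2$ to a section vanishing to order at most $b-a$ at $p$, and the equality case exhibits the required gluing isomorphism. Running this over a basis of $V_{n_1}|_s$ adapted to its vanishing filtration at $p$ produces exactly the data demanded by \autoref{def:eht}; I would cite this step from \cite[\S 4]{oss:14} rather than reprove it.

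\textbf{Converse: construction.} Uniqueness is formal: the fibre $V_n|_s$ is forced --- over $\eta$ it is the common $k$-dimensional subspace transported from $\mathcal W_1$ (equivalently $\mathcal W_2$) by the isomorphisms $(\mathcal E_{n_1})_\eta \cong (\mathcal E_n)_\eta \cong (\mathcal E_{n_2})_\eta$, and over $0$ it is the subspace produced by the refined-EHT reconstruction of \cite[\S 4]{oss:14} applied to $(\mathcal W_1|_s, \mathcal W_2|_s)$ --- and a sub-bundle of the vector bundle $\pi_*(\mathcal E_n)_S$ is determined by its fibres because $S$ is reduced. For existence I would build the $V_n$ by hand. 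For $n \ge n_2$ let $V_n$ be the image of $\mathcal W_2$ under $\pi_*\theta_1^{\,n-n_2}$, and for $n \le n_1$ the image of $\mathcal W_1$ under $\pi_*\theta_2^{\,n_1-n}$; these are rank-$k$ sub-bundles because the relevant $\theta$'s are fibre-wise injective on the relevant sections --- on the central fibre $\theta_1$ (resp.\ $\theta_2$) vanishes on $C_1$ (resp.\ $C_2$) and restricts to $C_2$ (resp.\ $C_1$) as the inclusion $\mathcal E_n|_{C_2} \hookrightarrow \mathcal E_n|_{C_2}(p)$, while the vanishing of $H^0$ on the other component (from the choice of $n_1, n_2$) leaves no room for a kernel. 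For $n_1 < n < n_2$ I would define $V_n$ following Osserman's reconstruction of a linked series from a refined EHT pair: over each $s$ above $0$, $V_n|_s$ is the span inside $H^0(C, E_n)$ of the images of $W_1$ and $W_2$ under the appropriate twists of the identity map, a $k$-dimensional subspace precisely because $(W_1, W_2)$ is \emph{refined}; over $\eta$ it is the common subspace.

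\textbf{Converse: verification, and the main obstacle.} It then remains to check that $V = (V_n)$ is a linked linear series and that $V|_s$ is simple for $s$ over $0$. The compatibility condition of \autoref{def:lls} holds by construction, since $\pi_*\theta$ carries the chosen generators of $V_m$ to those of $V_{m+n}$; and simplicity is built into the refined reconstruction, taking (suitable twists of) the basis vectors $s^1_v$ and $s^2_{k+1-v}$ of \autoref{def:eht} as a common basis of every $V_n|_s$. The genuinely delicate point --- and the step I expect to be the hardest --- is showing that each intermediate $V_n$ with $n_1 < n < n_2$ is a sub-bundle, i.e.\ that the fibre-wise definitions glue to a locally free sheaf of rank $k$ whose inclusion into $\pi_*(\mathcal E_n)_S$ is fibre-wise injective. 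This is precisely where the \emph{refined} hypothesis is indispensable --- without it the two pushed-forward subspaces can overlap more than expected and the span drops rank on the central fibre --- and where the reducedness of $S$ is used, to upgrade ``constant fibre dimension $k$'' to ``locally free of rank $k$'' and hence to a sub-bundle in the sense of the remark following \autoref{def:lls}. I would carry this out along the lines of \cite[\S 4]{oss:14}.
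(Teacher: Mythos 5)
Your overall architecture (uniqueness is formal; the crux is the intermediate $V_n$ for $n_1<n<n_2$; reducedness upgrades constant fibre dimension to a sub-bundle) matches the paper, and your treatment of the extremal ranges $n\le n_1$, $n\ge n_2$ is fine. But the construction you give for the intermediate $V_n$ is wrong, and it is wrong at exactly the step you yourself flag as the hard one. On the central fibre every section in $W_1=V_{n_1}|_s\subset H^0(C,E_{n_1})$ restricts to zero on $C_2$ (that is what the vanishing $H^0(C_2,E_{n_1}|_{C_2})=0$ says), and $\theta_1$ kills precisely the sections vanishing on $C_2$; so $\theta_1^{\,n-n_1}$ annihilates $W_1$, and symmetrically $\theta_2^{\,n_2-n}$ annihilates $W_2$. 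The ``span of the images'' is therefore not $k$-dimensional --- under the literal reading it is zero, and under the charitable reading (untwisted sections extended by zero across the node) it consists only of sections supported on a single component, hence misses every section that is nonzero on both $C_1$ and $C_2$. Such glued sections are unavoidable: whenever $n-n_1$ occurs in the vanishing sequence of $W_1$ at $p$, the correct $V_n$ must contain elements whose $\iota$-image is $(s^1_v,s^2_{k+1-v})$ with both entries nonzero at $p$, and these are not sums of sections supported on one component. Your diagnosis of why refinedness is needed (``the span drops rank'') is also backwards for the same reason.

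The paper's construction goes in the opposite direction: the compatibility condition of \autoref{def:lls} forces $V_n$ to map into $W_1\oplus W_2$ under $\iota=(\theta_{n_1-n},\theta_{n_2-n})$ followed by restriction to components, so one must take $V_n=\iota^{-1}(W_1\oplus W_2)$, i.e.\ the \emph{kernel} of the induced map to $\pi_*(\mathcal E_{n_1})/\mathcal W_1\oplus\pi_*(\mathcal E_{n_2})/\mathcal W_2$ --- the maximal admissible subspace, not the minimal one. The refined condition together with the gluing condition \eqref{gluing:eht} is what pins the dimension of this preimage down to exactly $k$ (without refinedness it can be too large, not too small), and the explicit basis one extracts, consisting of $c$ glued elements and $k-c$ elements supported on single components, is also what exhibits simplicity. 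For the same reason your sketch of the forward direction does not work as stated: the image of $v\in V_{n_1}|_s$ under $\pi_*\theta_1^{\,b}$ is zero, not a section vanishing to order at most $b-a$, so the vanishing inequality \eqref{ieq:eht} cannot be read off the chain $V_{n_1}\to\cdots\to V_{n_2}$ alone; it comes from the fact that \emph{all} the intermediate $V_n$ have rank $k$, which is the content of the result of Osserman that both you and the paper cite.
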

\begin{proof}
  Proving that $(W_1, W_2)$ is an EHT limit linear series is straightforward, and left to the reader.
  It is a special case of \cite[Theorem~4.3.4]{oss:14} and the equivalence of type I and type II series in the two component case (\cite[Remark~3.4.15]{oss:14}.

  The converse also follows from the proof of \cite[Theorem~4.3.4]{oss:14}, but it is not explicitly stated there.
  So we offer a proof.
  
  First, suppose that $S$ lies over $\eta \in B$.
  Then $V_n \subset \pi_* (\mathcal E_n)_S$ is determined uniquely as the image of $V_{n_i} = \mathcal W_{n_i} \subset \pi_* (\mathcal E_{n_i})_S$ for either $i = 1$ or $i = 2$.

  Next, suppose that $S = \spec K$, and it lies over $0 \in B$.
  Denoting $(\mathcal E_n)_S$ by $E_n$, we must construct $V_n \subset H^0(C, E_n)$.
  By composing $\theta_{n_i-n} \from E_n \to E_{n_i}$ and the restriction $E_{n_i} \to E_{n_i}|_{C_i}$, we get a map
  \[ \iota \from H^0(C, E_n) \to H^0(C_1, E_{n_1}|_{C_1}) \oplus H^0(C_2, E_{n_2}|_{C_2}). \]
  The vanishing condition on the twists of $E$ mean that $\iota$ is injective.
  The compatibility condition in \autoref{def:lls} implies that we must choose $V_n$ so that $\iota (V_n) \subset W_1 \oplus W_2$.
  We claim that $\dim \iota^{-1}(W_1 \oplus W_2) = k$, so that there is a unique choice of $V_n$, namely $V_n = \iota^{-1}(W_1 \oplus W_2)$.

  Suppose $s \in \iota^{-1}(W_1 \oplus W_2)$.
  Then $\iota(s)$ is a linear combination of $(s^1_1,0), \dots, (s^1_k, 0)$, and $(0, s^2_1), \dots, (0,s^2_k)$.
  Write $\iota(s) = (s_1, s_2)$.
  Since $s_i$ is obtained by applying $\theta_{n-n_i}$, and $\theta$ on $C_i$ at $p$ corresponds to multiplication by the uniformizer, we see that
  \begin{equation}\label{eqn:vanishing}
    \ord_p(s_1) \geq n - n_1, \text{ and likewise, } \ord_p(s_2) \geq n_2 - n.
  \end{equation}
  Let $v_1 \in \{1, \dots, k\}$ be the smallest such that $a^1_{v_1} \geq n-n_1$, and $v_1+c$ the smallest such that $a^1_{v_1+c} > n-n_1$.
  Since $(W_1, W_2)$ is refined, and $n_2 - n_1 = b$, we see that $v_2 = k+1-v_1$ is the largest such that $a^2_{v_2} \leq n_2 - n$, and $v_2 - c$ the smallest such that $a^2_{v_2 + c} < n_2 - n$.
  The vanishing conditions \eqref{eqn:vanishing} imply that $\iota(s)$ must be a linear combination of $(s^1_{v_1},0), \dots, (s^1_k,0)$ and $(0, s^2_{v_2-c}), \dots,  (0,s^2_k)$.
  Suppose
  \[ \iota(s) = \sum_{\ell = v_1}^k \alpha_{\ell} \cdot (s^1_\ell,0) + \sum_{\ell = v_2-c}^{k} \beta_\ell \cdot (0,s^2_\ell),\]
  where $\alpha_{\ell}$ and $\beta_{\ell}$ are elements of the field $K$.
  Since $s$ is a section on the entire nodal curve $C$, its two restrictions to $C_1$ and $C_2$ are equal at $p$.
  In terms of the two components of $\iota(s)$, and in light of the gluing condition \eqref{gluing:eht} in \autoref{def:eht}, this equality is equivalent to $\alpha_{\ell} = \beta_{k+1-\ell}$ for $v_1 \leq \ell < v_1+c$.
  That is, $\iota(s)$ is a linear combination of the $k$ elements 
  \[ (s^1_{v_1} , s^2_{v_2}), \dots, (s^1_{v_1+c-1} , s^2_{v_2-c+1}), (s^1_{v_1+c},0), \dots, (s^1_k,0), (0,s^2_{v_2+1}), \dots, (s^2_{k},0).\]  
  Conversely, it is easy to see that any such linear combination lies in $W_1 \oplus W_2$.
  Hence the claim that $\dim \iota^{-1}(W_1 \oplus W_2) = k$.

  Set $V_n = \iota^{-1}(W_1 \oplus W_2)$.
  To see that $V$ is simple, we must exhibit appropriate $w_i$ and $v_i \in V_{w_i}$ for $i = 1, \dots, k$.
  Take $w_i = n-n_1-a^1_i$, and let $v_i \in V_{w_i} \subset H^0(C, E_{w_i})$ be such that $\iota(v_i) = (s^1_i, s^2_{k+1-i})$.
  Then the images of $v_1, \dots, v_k$ form a basis of $V_n$ for all $n \in \Z$.

  For more general $S$, consider the map
  \[ \overline \iota \from \pi_* (\mathcal E_n)_S \to \pi_*(\mathcal E_{n_1})_S / \mathcal W_1 \oplus \pi_*(\mathcal E_{n_2})_S / \mathcal W_2,\]
  obtained by composing $\iota = \pi_*(\theta_{n_1-n} \oplus \theta_{n_2-n})$ and the projections $\pi_*(\mathcal E_{n_i})_S \to \pi_*(\mathcal E_{n_i})_S / \mathcal W_i$.
  We proved that, for every $\spec K \to S$, the kernel of $\overline \iota \otimes_{\O_S} K$ is $k$-dimensional.
  Since $S$ is reduced, it is easy to prove that $V_n = \ker \iota$ is a sub-bundle of $\pi_*(\mathcal E_n)$ (see \cite[B.3.4 with reduced $B$]{oss:14}).
  It is also easy to check that $V = (V_n \mid n \in \Z)$ is a linked linear series, the only one that satisfies $V_{n_i} = \mathcal W_i$.
  The proof is now complete.
\end{proof}

\autoref{prop:llseht} allows us to combine the economy of specifying an EHT limit linear series with the convenient functorial definition of a linked linear series.
We use this in the definition of the projection-ramification map in terms of linked linear series.

\subsection{Projection-ramification with non-generic vanishing sequence}
\label{sec:prnongeneric}
We consider the projection-ramification map for linear series with a non-generic vanishing sequence.
The analysis of such series plays a key role in defining the projection-ramification map for linked linear series.

Let $C$ be a smooth curve and $p \in C$ a point.
Let $E$ be a vector bundle on $C$ of rank $r$.
The projective spaces associated to the vector spaces $E(np)|_p$, for $n \in \Z$, are canonically isomorphic to each other, so we identify them.
The vanishing sequences considered are at the point $p$.
Choose a uniformizer $t$ of $C$ at $p$.

Suppose $V \subset H^0(C, E)$ is an $(r+1)$-dimensional subspace with the vanishing sequence 
\begin{equation}\label{eqn:specialvs}
  (\underbrace{a, \dots, a}_{i}, \underbrace{a+1, \dots, a+1}_{r+1-i}),
\end{equation}
for some $i$ with $1 \leq i \leq r$, and $a \geq 0$.
Let $v_1, \dots, v_{r+1}$ be a basis of $V$ adapted to the vanishing sequence, namely a basis $v_1, \dots, v_{r+1}$ such that in the stalk $E_p$, we can write
\begin{equation}\label{eqn:basis}
  v_1 = t^a \widetilde v_1, \dots, v_{i} = t^a \widetilde v_i,\quad v_{i+1} = t^{a+1} \widetilde v_{i+1}, \dots, v_{r+1} = t^{a+1} \widetilde v_{r+1},
\end{equation}
for some $\widetilde v_1, \dots, \widetilde v_{r+1} \in E_p$ such that the images of $\widetilde v_1, \dots, \widetilde v_i$ in the fiber $E|_p$ are linearly independent, and the same holds for the images of $\widetilde v_{i+1}, \dots, \widetilde v_{r+1}$.
Here we are slightly abusing the notation by denoting $v_i$ and its image in $E_p$ under the natural evaluation map by the same letter.
Let $V^0 \subset E|_p$ be spanned by the images of $\widetilde v_1, \dots, \widetilde v_i$, and $V^1 \subset E|_p$ by the images of $\widetilde v_{i+1}, \dots, \widetilde v_{r+1}$.
It is easy to check that a different choice of basis adapted to the vanishing sequence gives the same $V^0$ and $V^1$.
By construction, $\dim V_0 = i$ and $\dim V^1 = r+1-i$, and therefore, $\dim (V^0 \cap V^1) \geq 1$.
We say that $V$ has \emph{transverse vanishing} at $p$ if 
\begin{equation}\label{eq:genericity}
  \dim (V^0 \cap V^1) = 1.
\end{equation}
Note that if $V$ is base-point free at $p$, then $\dim V^0 = r$ and $\dim V^1 = 1$, so $V$ automatically has transverse vanishing.

\begin{proposition}\label{prop:agreement}
  Suppose $V \subset H^0(C, E)$ is an $(r+1)$-dimensional subspace with vanishing sequence \eqref{eqn:specialvs} and transverse vanishing at $p$.
  Then the ramification section $r_V$ of $V$ vanishes to order $(r+1)a + (r-i)$ at $p$.
  Furthermore, writing $r_V = t^{(r+1)a+r-i} \cdot \widetilde r$, the one-dimensional subspace of $E|_p$ spanned by $\widetilde r|_p$ is $V^0 \cap V^1$.
\end{proposition}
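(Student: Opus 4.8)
The plan is to reduce everything to an explicit local computation around $p$ and then extract the leading term of the equation \eqref{eqn:Rmatrix} for $R(V)$. First I would fix a uniformizer $t$ at $p$, trivialize $E$ near $p$ by sections $X_1,\dots,X_r$ and $K_C$ by $dt$, take the adapted basis $v_1,\dots,v_{r+1}$ of \eqref{eqn:basis}, and write $e(v_l)=\sum_k m_{l,k}X_k$ with $m_{l,k}\in\O_{C,p}$. Adaptedness means $m_{l,k}\in t^a\O_{C,p}$ for $l\le i$ and $m_{l,k}\in t^{a+1}\O_{C,p}$ for $l>i$, and, writing $u_l\in E|_p$ for the image of $\widetilde v_l$, the leading coefficient of $t^{-a}m_{l,k}$ (resp.\ $t^{-(a+1)}m_{l,k}$) at $p$ is the $X_k$-coordinate of $u_l$; recall $V^0=\langle u_1,\dots,u_i\rangle$ and $V^1=\langle u_{i+1},\dots,u_{r+1}\rangle$, with each of the two families independent. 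By \autoref{prop:rdv} and \eqref{eqn:Rmatrix}, the section $r_V(v_1\wedge\dots\wedge v_{r+1})$, read off in the induced trivialization of $K_C\otimes E\otimes\det E$, is $\sum_{l,j}M_l\,m_{l,j}'\,(dt\otimes X_j)$, where $M_l=(-1)^l\det(m_{l',j}\mid l'\ne l)$ is the $l$-th maximal minor of $(m_{l,j})$ and $m_{l,j}'=dm_{l,j}/dt$.

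Next I would bound the order of vanishing. Factoring the row $t$-powers out of the $r\times r$ determinant defining $M_l$ gives $\ord_p M_l\ge ra+(r+1-i)$ when $l\le i$ (a row of type $t^a$ is deleted) and $\ord_p M_l\ge ra+(r-i)$ when $l>i$; likewise $\ord_p m_{l,j}'\ge a-1$ for $l\le i$ and $\ge a$ for $l>i$ (for $a=0$ the rows $l\le i$ contribute in order strictly larger than $r-i$ and may be discarded). Hence each summand $M_l m_{l,j}'$ has order $\ge(r+1)a+(r-i)$, so $\ord_p r_V\ge (r+1)a+(r-i)$.

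The crux is the leading coefficient. Factoring the row $t$-powers out of the determinant defining $M_l$ and evaluating at $p$ identifies the leading coefficient of $M_l$ with $\lambda_l:=(-1)^l\det(u_1,\dots,\widehat{u_l},\dots,u_{r+1})$. By Cramer's rule $\lambda=(\lambda_1,\dots,\lambda_{r+1})$ is a linear relation $\sum_l\lambda_l u_l=0$ among the $r+1$ vectors $u_l\in E|_p$, and $\lambda\ne 0$ precisely when $u_1,\dots,u_{r+1}$ span $E|_p$; since $\dim V^0+\dim V^1=r+1$, spanning is equivalent to $\dim(V^0\cap V^1)=1$, that is, to transverse vanishing \eqref{eq:genericity}. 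Combining the leading coefficients of $M_l$ and of $m_{l,j}'$ computed above, the coefficient of $t^{(r+1)a+r-i}$ in $\sum_{l,j}M_l m_{l,j}'(dt\otimes X_j)$ equals, as an element of $E|_p$,
\[ a\sum_{l=1}^{r+1}\lambda_l u_l\;+\;\sum_{l=i+1}^{r+1}\lambda_l u_l\;=\;\sum_{l=i+1}^{r+1}\lambda_l u_l\;=\;-\sum_{l=1}^{i}\lambda_l u_l, \]
using the relation $\sum_l\lambda_l u_l=0$.

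To finish, assume transverse vanishing, so $\lambda\ne0$. If $\sum_{l>i}\lambda_l u_l=0$ then $\lambda_l=0$ for $l>i$ by independence of $u_{i+1},\dots,u_{r+1}$, whereupon the relation forces $\lambda_l=0$ for $l\le i$ by independence of $u_1,\dots,u_i$, contradicting $\lambda\ne0$. Thus the leading coefficient is nonzero and $\ord_p r_V=(r+1)a+(r-i)$. Finally $\sum_{l>i}\lambda_l u_l$ lies in $\langle u_{i+1},\dots,u_{r+1}\rangle=V^1$ and, being equal to $-\sum_{l\le i}\lambda_l u_l$, also lies in $\langle u_1,\dots,u_i\rangle=V^0$; as a nonzero vector of the line $V^0\cap V^1$ it spans it, which is the assertion about $\widetilde r|_p$. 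The whole argument is a bookkeeping computation; the only delicate point is that the orders of vanishing of the minors $M_l$ differ according to whether $l\le i$ or $l>i$ — this asymmetry is exactly what pins the leading coefficient inside $V^0\cap V^1$ — together with the identification of the leading coefficients of the $M_l$ with the Pl\"ucker coordinates of the relation among the $u_l$, plus the minor case distinction $a=0$ versus $a\ge1$.
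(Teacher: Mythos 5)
Your proof is correct, and at bottom it computes the same local determinant \eqref{eqn:Rmatrix} that the paper does, but it organizes the extraction of the leading term in a genuinely different way. The paper uses transverse vanishing at the outset to pick a basis $\overline s_1,\dots,\overline s_r$ of $E|_p$ with $\overline s_1$ spanning $V^0\cap V^1$, $V^0=\langle \overline s_1,\dots,\overline s_i\rangle$ and $V^1=\langle \overline s_{i+1},\dots,\overline s_r,\overline s_1\rangle$, and then trivializes $E$ near $p$ by $\widetilde v_1,\dots,\widetilde v_r$ themselves; in that trivialization the matrix is essentially triangular and the leading term $t^{(r+1)a+r-i}\,\widetilde v_1$ is read off by inspection. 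You instead keep an arbitrary trivialization, bound the orders of the maximal minors $M_l$ according to whether the deleted row has order $a$ or $a+1$, identify their leading coefficients with the signed maximal minors $\lambda_l$ of the matrix of the $u_l$, and use the Cramer relation $\sum_l\lambda_l u_l=0$ to rewrite the leading coefficient of $r_V$ as $\sum_{l>i}\lambda_l u_l=-\sum_{l\le i}\lambda_l u_l$, which visibly lies in $V^1\cap V^0$ and is nonzero precisely under transverse vanishing. Your route is a little longer but makes transparent exactly where transverse vanishing enters (it is equivalent to $\lambda\neq 0$, i.e.\ to the $u_l$ spanning $E|_p$) and handles the $a=0$ degeneration explicitly; the paper's route buys brevity at the price of a carefully chosen basis whose very existence already uses transverse vanishing.
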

\begin{proof}
  Thanks to transverse vanishing, there exists a basis $\{\overline s_1, \dots, \overline s_{r} \}$ of $E|_p$ such that
  \[ V^0 = \langle  \overline s_1, \dots, \overline s_i \rangle \text{ and } V^1 = \langle  \overline s_{i+1}, \dots, \overline s_r, \overline s_1 \rangle.\]
  Let $v_1, \dots, v_{r+1}$ be a basis of $V$ adapted to the vanishing sequence such that if $\widetilde v_i$ are defined as in \eqref{eqn:basis} then the images of $\widetilde v_1, \dots, \widetilde v_r$ in $E|_p$ are $\overline s_1, \dots, \overline s_r$, respectively, and the image of $\widetilde v_{r+1}$ is $\overline s_1$.
  In particular, the $r$ elements $\widetilde v_1, \dots, \widetilde v_r \in E_p$ give a trivialization of $E$ around $p$.
  Write
  \[ \widetilde v_{r+1} = b_1 \widetilde v_1 + \dots + b_r \widetilde v_r\]
  in $E_p$, where $b_1, \dots, b_r \in \O_{C,p}$.
  Since the image of $\widetilde v_{r+1}$ in $E|_p$ is $\overline s_1$, we get that $b_1 \equiv 1 \pmod {\mathfrak m_p}$, and $b_2, \dots, b_r \in \mathfrak m_p$.  
  Using the basis $v_1, \dots, v_{r+1}$ of $V$ and the local trivialization $\widetilde v_1, \dots, \widetilde v_r$ of $E$, we can write $r_V$ as the determinant (see \eqref{eqn:Rmatrix}) as follows
  \begin{align*}
    r_V &= \det
    \begin{pmatrix}
      t^a & & & & & &  at^{a-1}\widetilde v_1\\
       & \ddots & & & & &\vdots\\
       & & t^a  & & & &a t^{a-1}\widetilde v_i\\
       & & & t^{a+1}  & & &(a+1)t^a \widetilde v_{i+1} \\
       & & & & \ddots & & \vdots\\
       & & & & &t^{a+1} &(a+1)t^{a}\widetilde v_r\\
      b_1t^{a+1}& b_2 t^{a+1} & \cdots & b_{r-1}t^{a+1} & & b_rt^{a+1} & (a+1)t^a\widetilde v_1 + t^{a+1}(\cdots)
    \end{pmatrix}\\
        &= t^{(r+1)a+r-i} \widetilde v_1  + t^{(r+1)a+r-i+1} (\cdots).
  \end{align*}
  Thus the order of vanishing of $r_V$ is as claimed.
  Furthermore, $\widetilde r$ is given by
  \[ \widetilde r = \widetilde v_1 + t (\cdots).\]
  Since the image of $\widetilde v_1$, namely $\overline s_1$, spans $V^0 \cap V^1$, the proof is complete.
\end{proof}

We are primarily interested in generic $(r+1)$-dimensional subspaces $V \subset H^0(C, E)$.
A generic such $V$ has the vanishing sequence
\[ (0, \dots, 0, 1).\]
For linked linear series, it is important to also study the $V$ with complementary vanishing sequence, namely
\[ (0,1, \dots, 1),\]
which we now do.
For simplicity, we restrict to $C = \P^1$.

Let $E$ be an ample vector bundle on $\P^1$ of rank $r$.
Fix a point $p \in \P^1$; all the vanishing sequences are at $p$.
Consider  the locally closed subset $U \subset \Gr(r+1, H^0(\P^1, E))$ parametrizing $V \subset H^0(\P^1, E)$ with vanishing sequence
\[ (0,\underbrace{1,\dots, 1}_{r}).\]
Given such a $V$, let $\widetilde r_V \in \P H^0(E \otimes \det E \otimes K_{\P^1} \otimes \O(-(r-1)p)^*$ be the reduced ramification section, namely the section obtained by dividing the usual ramification section $r_V$ by the $(r-1)$-th power of a uniformizer at $p$ (see \autoref{prop:agreement}).
The assignment $V \mapsto \widetilde r_V$ gives a variant of the projection-ramification map, which we call the \emph{reduced projection-ramification map}
\begin{equation}\label{eqn:rrd}
  \widetilde \rho \from U \to \P H^0(\P^1, E \otimes \det E \otimes K_{\P^1} \otimes \O(-(r-1)p))^*.
\end{equation}
Note that, just as in the case of the usual projection-ramification map, the source and the target of the reduced projection-ramification map are of the same dimension.

Having defined the reduced projection-ramification map, we now relate it back to the usual projection-ramification map, but on a different vector bundle.
Given a one-dimensional subspace $\ell \subset E|_p$, define $E'_\ell$ by the exact sequence
\[ 0 \to E_\ell' \to E \to E|_p/\ell\to 0.\]
There exists a Zariski open subset of the projective space of lines in $E|_p$ such that for all $\ell$ in this set, the isomorphism class of $E'_{\ell}$ remains constant.
Denote this isomorphism class by $E'_{\rm gen}$.
\begin{proposition}\label{prop:domred}
  If the usual projection-ramification map
  \[ \rho \from \Gr(r+1, H^0(\P^1, E'_{\rm gen})) \dashrightarrow \P H^0(\P^1, E'_{\rm gen} \otimes \det E'_{\rm gen} \otimes K_{\P^1})^*\]
  is dominant, then so is the reduced projection-ramification map
  \[\widetilde \rho \from U \to \P H^0(\P^1, E \otimes \det E \otimes K_{\P^1} \otimes \O(-(r-1)p))^*.\]
\end{proposition}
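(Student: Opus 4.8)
The plan is to fibre $\widetilde\rho$, together with the ordinary projection--ramification maps of the elementary modifications $E'_\ell$, over the $\P^{r-1}$ of lines $\ell\subset E|_p$, and then to upgrade the fibrewise dominance of the latter (which the hypothesis supplies for the generic $\ell$, where $E'_\ell\cong E'_{\rm gen}$) to dominance of $\widetilde\rho$. Write $N=\det E\otimes K_{\P^1}\otimes\O(-(r-1)p)$, so the target of $\widetilde\rho$ is $\P H^0(\P^1,E\otimes N)^*$. For a line $\ell\subset E|_p$, the modification sequence $0\to E'_\ell\to E\to E|_p/\ell\to 0$ has cokernel of length $r-1$ at $p$, giving a canonical identification $\det E'_\ell\cong\det E\otimes\O(-(r-1)p)$; tensoring the inclusion $E'_\ell\hookrightarrow E$ by $\det E\otimes K_{\P^1}\otimes\O(-(r-1)p)$ then yields an inclusion of bundles $E'_\ell\otimes\det E'_\ell\otimes K_{\P^1}\hookrightarrow E\otimes N$, hence a linear inclusion $\jmath_\ell$ on global sections and a linear embedding $\overline\jmath_\ell$ of the associated projective spaces onto a linear subspace $\Lambda_\ell\subset\P H^0(\P^1,E\otimes N)^*$. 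Unwinding the construction, $\jmath_\ell$ identifies $H^0(\P^1,E'_\ell\otimes\det E'_\ell\otimes K_{\P^1})$ with the space of those sections $\sigma$ of $E\otimes N$ whose value $\sigma(p)$ lies in the image $\ell\otimes N|_p$ of $(E'_\ell\otimes N)|_p\to(E\otimes N)|_p$.

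First I would match the sources: for $V\in U$ the image $\ell(V):=\mathrm{ev}_p(V)\subset E|_p$ is a line (that is exactly the vanishing condition defining $U$), and since every section in $V$ takes its value at $p$ inside $\ell(V)$ we get $V\subset H^0(\P^1,E'_{\ell(V)})$; conversely, for $\ell$ in the dense open set of lines with $E'_\ell\cong E'_{\rm gen}$, a general $(r+1)$-dimensional $V\subset H^0(\P^1,E'_\ell)$ lies in $U$ with $\ell(V)=\ell$ (its evaluation image at $p$ is $\ell$ because $\mathrm{ev}_p$ on $H^0(\P^1,E'_\ell)$ has image $\ell$, and a general $V$ has the generic vanishing sequence $(0,1,\dots,1)$), so $U$ is covered by dense opens $U_\ell\subset\Gr(r+1,H^0(\P^1,E'_\ell))$. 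Then I would match the maps on each $U_\ell$: since $\rho$ for $E'_{\rm gen}$ is dominant, a general $V\in U_\ell$ is a properly ramified projection of $\P E'_\ell$, so its ordinary ramification section $r_V^{E'_\ell}$ is nonzero, and since $V$ has transverse vanishing at $p$ (automatic for the sequence $(0,1,\dots,1)$) its reduced ramification section $\widetilde r_V^{E}$ is defined via \autoref{prop:agreement}; the key claim is $\jmath_\ell(r_V^{E'_\ell})=\widetilde r_V^{E}$, and this holds because both sections, pushed into $H^0(\P^1,E\otimes\det E\otimes K_{\P^1})$ along $\O(-(r-1)p)\hookrightarrow\O$, equal $r_V^{E}$ --- the first because $r_V^{E'_\ell}$ and $r_V^{E}$ agree on $\P^1\smallsetminus p$, where $E'_\ell$, $E$ and all the evaluation maps coincide, and $E\otimes\det E\otimes K_{\P^1}$ is torsion free; the second by definition of $\widetilde r_V^{E}$. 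Hence $\widetilde\rho=\overline\jmath_\ell\circ\rho_{E'_\ell}$ on a dense open of $U_\ell$, and as $\rho_{E'_\ell}$ is dominant and $\overline\jmath_\ell$ a closed embedding, $\overline{\widetilde\rho(U_\ell)}=\Lambda_\ell$.

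It then remains to show that $\bigcup_\ell\Lambda_\ell$, over the lines $\ell$ with $E'_\ell\cong E'_{\rm gen}$, is dense in $\P H^0(\P^1,E\otimes N)^*$, for then $\overline{\widetilde\rho(U)}\supseteq\overline{\bigcup_\ell\Lambda_\ell}$ is everything. But $E\otimes N$ is globally generated (it is a direct sum of line bundles of nonnegative degree, $E$ being ample), so a general section $\sigma$ has $\sigma(p)\ne 0$, spanning a line $\ell_\sigma\subset E|_p$ that is general as $\sigma$ varies; for such $\sigma$ we have $E'_{\ell_\sigma}\cong E'_{\rm gen}$ and, by the description of $\jmath_\ell$ above, $[\sigma]\in\Lambda_{\ell_\sigma}$. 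Thus $\bigcup_\ell\Lambda_\ell$ contains the general point of the target, so it is dense, and $\widetilde\rho$ is dominant. (The dimensions are consistent: $\dim U=\dim\Gr(r+1,H^0(\P^1,E'_{\rm gen}))+(r-1)$ equals $\dim\P H^0(\P^1,E\otimes N)^*$, as noted after \eqref{eqn:rrd}.)

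The only genuinely non-formal step is the identification $\widetilde\rho=\overline\jmath_\ell\circ\rho_{E'_\ell}$ on $U_\ell$, i.e.\ that replacing $E$ by the modification $E'_\ell$ converts the reduced ramification divisor of $V$ into an honest ramification divisor. I expect the ``agree away from $p$, then torsion free'' argument to dispose of the behaviour at $p$ without touching the explicit local determinant \eqref{eqn:Rmatrix}; the two points requiring care are (i) that $r_V^{E'_\ell}\ne 0$ for general $V\in U_\ell$, which is precisely where the dominance hypothesis for $E'_{\rm gen}$ is used, and (ii) threading the canonical isomorphism $\det E'_\ell\cong\det E\otimes\O(-(r-1)p)$ through $\jmath_\ell$ carefully enough that the two sections literally coincide on $\P^1\smallsetminus p$ rather than being merely proportional there.
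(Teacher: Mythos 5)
Your proof is correct and follows essentially the same route as the paper's: both hinge on the elementary modification $E'_\ell$ attached to the line $\ell \subset E|_p$ spanned by the value at $p$ of a generic target section, use the induced inclusion $E'_\ell\otimes\det E'_\ell\otimes K_{\P^1}\hookrightarrow E\otimes\det E\otimes K_{\P^1}\otimes\O(-(r-1)p)$ to identify the target of $\rho_{E'_\ell}$ with a linear subspace of the target of $\widetilde\rho$, and then transfer dominance. The only difference is organizational --- the paper argues pointwise from a single generic $D$ rather than sweeping over all lines $\ell$ --- and you make explicit the compatibility $\widetilde\rho=\overline{\jmath}_\ell\circ\rho_{E'_\ell}$ (via the \emph{agree away from $p$, then torsion-free} argument) which the paper asserts without comment.
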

\begin{proof}
  Let $D \in \P H^0(E \otimes \det E \otimes K_{\P^1} \otimes \O(-(r-1)p))^*$ be a generic section.
  Let $\ell \subset E|_p$ be the one-dimensional subspace defined by $D|_p$, and set $E' = E'_{\ell}$.
  Since $D$ is generic, we may assume $E' \cong E'_{\rm gen}$.
  The inclusion of sheaves $E' \to E$ induces an inclusion of sheaves
  \[
    E' \otimes \det E' \otimes K_{\P^1} \to E \otimes \det E \otimes \O(-(r-1)p) \otimes K_{\P^1},
  \]
  and by construction, $D$ is the image of a section $D' \in \P H^0(E' \otimes \det E' \otimes K_{\P^1})^*$.
  Since $\rho$ is dominant for $E'$, there exists a sequence of subspaces $V_n' \in \Gr(r+1, H^0(\P^1, E'))$ such that the limit of $\rho(V_n')$ is $D'$.
  Let $V_n \subset \Gr(r+1, H^0(\P^1, E))$ be the image of $V'_n$.
  Then the limit of $\widetilde \rho(V_n)$ is $D$.
  Since $D$ was generic, we get that $\widetilde \rho$ is dominant.
\end{proof}

\begin{corollary}\label{prop:domredexamples}
  The reduced projection-ramification map is dominant for the bundles $E = \O(1) \oplus \O(2)^{r-1}$ and $E = \O(2) \oplus \O(3)^{r-1}$.
\end{corollary}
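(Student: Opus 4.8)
The plan is to reduce everything to \autoref{prop:domred}, which asserts that the reduced projection-ramification map for $E$ is dominant as soon as the ordinary projection-ramification map $\rho$ is dominant for the generic elementary modification $E'_{\rm gen}$. Thus the entire task is to identify $E'_{\rm gen}$ in the two cases and then invoke a result already establishing dominance of $\rho$ for the bundle so obtained.

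The first step is the observation that the construction $E \mapsto E'_\ell$ commutes with twisting by a line bundle: tensoring the defining sequence $0 \to E'_\ell \to E \to E|_p/\ell \to 0$ with $\O_{\P^1}(n)$ shows $(E \otimes \O(n))'_{\rm gen} = E'_{\rm gen} \otimes \O(n)$, using that $\O(n)|_p$ is one-dimensional so a generic line in $(E\otimes\O(n))|_p$ corresponds to a generic line in $E|_p$. Since $\O(1) \oplus \O(2)^{r-1} = F \otimes \O(1)$ and $\O(2) \oplus \O(3)^{r-1} = F \otimes \O(2)$ with $F = \O \oplus \O(1)^{r-1}$, both cases reduce to computing $F'_{\rm gen}$.

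The second step is the computation $F'_{\rm gen} = \O^r$. Unwinding the definition, $E'_\ell$ is exactly the subsheaf of $E$ consisting of sections whose value at $p$ lies in $\ell$; in particular $\rk F'_\ell = r$ and $\deg F'_\ell = \deg F - (r-1) = 0$ for every $\ell$. A rank-$r$, degree-$0$ bundle on $\P^1$ all of whose summands have degree $\le 0$ must be $\O^r$, so it suffices to show $h^0(F'_\ell(-1)) = 0$ for generic $\ell$. Now $F'_\ell(-1) \subset F(-1) = \O(-1)\oplus\O^{r-1}$, and a section of $F'_\ell(-1)$ is a section of $F(-1)$ whose value at $p$ lies in $\ell$, viewed inside $F(-1)|_p \cong F|_p$. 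The global sections of $F(-1)$ are precisely those of the $\O^{r-1}$ summand, and their values at $p$ span a fixed hyperplane $V_p \subsetneq F(-1)|_p$; for any line $\ell$ with $\ell \cap V_p = 0$ — a Zariski-generic condition — no such nonzero section exists, so $h^0(F'_\ell(-1)) = 0$. Hence $F'_{\rm gen} = \O^r$, and therefore $(\O(1)\oplus\O(2)^{r-1})'_{\rm gen} = \O(1)^r$ and $(\O(2)\oplus\O(3)^{r-1})'_{\rm gen} = \O(2)^r$.

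Finally, $\rho$ is dominant for $\P(\O(1)^r)$ by \autoref{prop:segre} and for $\P(\O(2)^r)$ by \autoref{prop:222}, so \autoref{prop:domred} yields the corollary in both cases. The only step carrying genuine content is the identification of $E'_{\rm gen}$; the obstacle there, such as it is, is purely bookkeeping — one must confirm that the generic elementary modification is the \emph{most balanced} bundle of the relevant rank and degree, and the vanishing $h^0(F'_\ell(-1)) = 0$ is exactly what records this.
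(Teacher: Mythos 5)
Your proposal is correct and follows the same route as the paper, which likewise deduces the corollary from \autoref{prop:domred} together with \autoref{prop:segre} and \autoref{prop:222}. The only difference is that you supply the verification that $E'_{\rm gen}$ equals $\O(1)^r$ and $\O(2)^r$ respectively (via the twist reduction to $F = \O \oplus \O(1)^{r-1}$ and the vanishing $h^0(F'_\ell(-1))=0$ for generic $\ell$), a computation the paper leaves implicit; your argument for it is correct.
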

\begin{proof}
  Follows from \autoref{prop:domred} and that the projection-ramification map is dominant for $E' = \O(1)^r$ and $E' = \O(2)^r$.
\end{proof}

\subsection{Projection-ramification for linked linear series}
\label{sec:prlls}
Recall the setup from \autoref{sec:lls}: $C = C_1 \cup C_2$ is a nodal union of two smooth projective curves of genus $g_1$ and $g_2$, and $\pi \from X \to B$ be a smoothing of $C$.
Let $\mathcal E$ be a vector bundle of rank $r$ on $X$ whose restriction $E$ to $C$ has multi-degree $(w_1, w_2)$.
The integers $n_2 \geq n_1$ are such that we have vanishing $H^0(C_2, E_n|_{C_2}) = 0$ for all $n \leq n_1$ and $H^0(C_1, E_n|_{C_1}) = 0$ for $n \geq n_2$.
For convenience, we decrease $n_1$ and increase $n_2$ so that the vanishing on $C_2$ holds for all $n \leq n_1 - (w_1-2g_1)$ and on $C_1$ for all $n \geq n_2 + (w_2-2g_2)$.
Define
\[ d_1 = w_1 - n_1r, \quad d_2 = w_2 + n_2r,\text{ and } b = n_2 - n_1,\]
as before.

Set $\mathcal E' = \mathcal E \otimes \det \mathcal E \otimes \omega_{X/B}$.
Then $\mathcal E'$ is a vector bundle of rank $r$ on $X$ whose restriction $E'$ to $C$ has multi-degree $(w_1', w_2')$ where
\[ w_1' = w_1 + r(w_1-2g_1+1) \text{ and } w'_2 = w_2 + r(w_2-2g_2+1).\]
We set
\[ n_1' = n_1(1+r) \text{ and } n_2' = n_2(1+r),\]
and observe that we have vanishings $H^0(C_2, E'_{n}|_{C_2}) = 0$ for $n \leq n_1'$ and $H^0(C_1, E'_{n}|_{C_1}) = 0$ for $n \geq n_2'$.
We also set
\[ b' = n_2' - n_1' = b(1+r).\]

Our next goal is to define a rational map
\begin{equation}\label{eq:Rtilde}
  \rho \from {\mathcal G}(r+1, \mathcal E) \dashrightarrow {\mathcal G}(1, \mathcal E')
\end{equation}
that extends the projection-ramification map
\[
  \rho \from \Gr(r+1, H^0(X_\eta, \mathcal E_\eta)) \dashrightarrow \Gr(1, H^0(X_\eta, \mathcal E'_\eta))
\]
on $X_\eta$.
For technical reasons, we define the map in \eqref{eq:Rtilde} only on the reduced scheme underlying ${\mathcal G}(r+1, \mathcal E)$.

Before defining the map, we identify three conditions on linked linear series on the central fiber that are required for the map to be defined.
To do this, consider a linked linear series $(V_n \mid n \in \Z)$ on $C$, and let $(W_1, W_2)$ be the associated EHT limit linear series namely $W_1 = V_{n_1}$ and $W_2 = V_{n_2}$ (see \autoref{prop:llseht}).
The first condition we want to impose is that $(W_1, W_2)$ be a refined EHT limit linear series; this is an open condition (see \cite[Proposition~4.1.5]{oss:14}).
The second condition we want to impose is that the vanishing sequence of $W_1 \subset H^0(C_1, E_{n_1}|_{C_1})$ at $p$ is of the form
\begin{equation}\label{eqn:llsvs}
  (\underbrace{a, \dots, a}_i, \underbrace{a+1, \dots, a+1}_{r+1-i})
\end{equation}
as in \eqref{eqn:specialvs}; imposing a particular vanishing sequence is again an open condition (see \cite[Proposition~4.2.5]{oss:14}).
Since $(W_1, W_2)$ is refined, it follows that the vanishing sequence of $W_2 \subset H^0(C_2, E_{n_2}|_{C_2})$ at $p$ is
\[ (\underbrace{b-a-1, \dots, b-a-1}_{r+1-i}, \underbrace{b-a, \dots, b-a}_{i}).\]
Recall from \autoref{sec:prnongeneric} that $W_1$ yields two vector spaces $V^0$ and $V^1$ in the fiber $E_{n_1}|_p$, which we may identify canonically (up to scaling) with the fiber $E|_p$.
Likewise, $W_2$ yields two analogous vector spaces, call them $\Lambda^0$ and $\Lambda^1$, in $E|_p$.
The gluing condition in the definition of EHT limit linear series (\autoref{def:eht}) and the definition of these vector spaces immediately shows that
\begin{equation}\label{eqn:vlambdaswitch}
  V^0 = \Lambda^1 \text{ and } V^1 = \Lambda^0.
\end{equation}
The third condition we want to impose is that these two vector spaces be transverse, namely $\dim (V^0 \cap V^1) = 1$.

Let $\mathcal U \subset {\mathcal G}(r+1, \mathcal E)$ be the complement of the union of the following closed sets:
\begin{enumerate}
\item the closure of the subset of $\Gr(r+1, H^0(X_\eta, \mathcal E_\eta))$ corresponding to $V \subset H^0(X_\eta, \mathcal E_\eta)$ for which the evaluation map $V\otimes\O_{X_\eta} \to \mathcal E_\eta$ has generic rank less than $r$.
\item the set of linked linear series $(V_n \mid n \in \Z)$ on $C$ such that the associated EHT limit linear series $(W_1, W_2)$ is not refined, or does not have the vanishing sequence as in \eqref{eqn:llsvs}, or does not satisfy the transversality condition $\dim (V^0 \cap V^1) = 1$.
\end{enumerate}
Give $\mathcal U$ the reduced scheme structure.

Let $S$ be a reduced $B$-scheme with a map to $\mathcal U$ given by the linked linear series $(V_n \mid n \in \Z)$.
On $X_S$, we have a diagram analogous to \eqref{eqn:differential_construction}, namely
\begin{equation}
  \label{eq:llspr}
  \begin{tikzcd}
    &\det \mathcal E_n^* \otimes \det V_n\ar{r}{j}\ar{d}{d} & V_n \otimes \O_{X_S}\ar{r}{e}\ar{d}{e} & \mathcal E_n\ar[equal]{d}&\\
    0\ar{r} & \Omega_{X_S/S} \otimes \mathcal E_n\ar{r} & P(\mathcal E_n)\ar{r} &\ar{r} \mathcal E_n \ar{r}& 0.
  \end{tikzcd}
\end{equation}
Here $P(\mathcal E_n)$ is the sheaf of principal parts of $\mathcal E_n$ relative to $X_S \to S$, and the bottom row is the natural exact sequence coming from its definition.
The top row is a complex, but it may not be exact.
The maps labeled $e$ are the evaluation maps.
The map $j$ is defined by the maximal minors of $e \from V_n \otimes \O_{X_S} \to \mathcal E_n$.
The map $d$ is the unique map induced by the other maps in the diagram.
By composing $d$ through the inclusion $\Omega_{X_S/S} \to \omega_{X_S/S}$, and doing some rearrangement, we obtain a map
\begin{equation}\label{eqn:Rn}
r_n \from \det V_n \to \mathcal \pi_*(\mathcal E_n \otimes \det \mathcal E_n \otimes \omega^*_{X_S/S}) = \pi_*(\mathcal E'_{(r+1)n}).
\end{equation}
Consider the two extremal sections, namely those corresponding to $n = n_1$ and $n = n_2$.
\begin{lemma}\label{lem:rameht}
  Over every $s \in S$ over $0 \in \Delta$, the restrictions $r_{n_1}|_s$ and $r_{n_2}|_s$ define a one-dimensional refined EHT limit linear series for $E'$.
\end{lemma}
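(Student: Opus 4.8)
The plan is to reduce to $S = \Spec K$ and then analyse the two extremal ramification sections componentwise on the central fibre. Since the formation of the maps $r_n$ of \eqref{eqn:Rn} (equivalently, of the diagram \eqref{eq:llspr}) is compatible with base change, it suffices to take $S = \Spec K$ for a field $K$ over the special point $0$; write $C = C_1 \cup_p C_2$, $E$, $E'$, $V_n$ for the resulting objects over $K$, and let $(W_1, W_2)$ be the EHT limit linear series associated to $(V_n)$, so $W_1 = V_{n_1}$ and $W_2 = V_{n_2}$ (\autoref{prop:llseht}). Because $s$ lands in $\mathcal U$, the pair $(W_1,W_2)$ is refined, $W_1$ has the vanishing sequence $(\underbrace{a,\dots,a}_{i},\underbrace{a+1,\dots,a+1}_{r+1-i})$ at $p$, and $\dim(V^0\cap V^1)=1$; refinement then forces $W_2$ to have vanishing sequence $(\underbrace{b-a-1,\dots,b-a-1}_{r+1-i},\underbrace{b-a,\dots,b-a}_{i})$ at $p$.

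The crucial point is a componentwise description of $r_{n_1}|_s$ (and, symmetrically, of $r_{n_2}|_s$). On $C_1$, restricting the diagram \eqref{eq:llspr} reproduces the differential construction \eqref{eqn:differential_construction} for the projection $W_1 \subset H^0(C_1, E_{n_1}|_{C_1})$, the only difference being the twist $\omega_{X_S/S}|_{C_1} = K_{C_1}(p)$ in place of $K_{C_1}$; hence $r_{n_1}|_s|_{C_1}$ is, up to sign, the ramification section of $W_1$ viewed inside the larger sheaf $E'_{n_1'}|_{C_1}$. On $C_2$, the restriction of $r_{n_1}|_s$ lies in $H^0(C_2, E'_{n_1'}|_{C_2}) = 0$ by the choice of $n_1'$, hence vanishes. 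Now \autoref{prop:agreement} applies to $W_1$ on $C_1$ (transversality and the prescribed vanishing sequence are precisely its hypotheses): it shows $r_{n_1}|_s$ is nonzero — so it spans a one-dimensional $W_1' \subset H^0(C_1, E'_{n_1'}|_{C_1})$ — vanishing at $p$ to order $a_1' := (r+1)a + (r-i) + 1$ (the extra $+1$ coming from $K_{C_1}\hookrightarrow K_{C_1}(p)$), with reduced value at $p$ spanning $V^0 \cap V^1$. Symmetrically, $r_{n_2}|_s$ spans a one-dimensional $W_2' \subset H^0(C_2, E'_{n_2'}|_{C_2})$, vanishing at $p$ to order $a_2' := (r+1)(b-a-1) + i$, with reduced value at $p$ spanning $\Lambda^0 \cap \Lambda^1$.

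It then remains to verify the two conditions of \autoref{def:eht} for $(W_1', W_2')$ with $k = 1$ and $b$ replaced by $b' = (r+1)b$. A direct computation gives $a_1' + a_2' = (r+1)b = b'$, so condition \eqref{ieq:eht} holds with equality for the unique index $v = 1$, and $(W_1', W_2')$ is \emph{refined}. For the gluing condition \eqref{gluing:eht} I would invoke \eqref{eqn:vlambdaswitch}, which gives $V^0 \cap V^1 = \Lambda^0 \cap \Lambda^1$: a generator of $W_1'$ and a generator of $W_2'$ thus have reduced values at $p$ spanning the same line of $E|_p$, and since the twist isomorphism $\widetilde\phi$ is the identity on that factor, rescaling the generator of $W_2'$ yields $\widetilde\phi(s^1_1) = s^2_1$. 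Hence $(W_1', W_2')$ is a one-dimensional refined EHT limit linear series, which is the assertion. (As a consistency check, $a_1' \geq 1$, so $r_{n_1}|_s$ does extend to a section on all of $C$ agreeing with its vanishing restriction to $C_2$ at $p$.)

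The main obstacle is the componentwise description of the second paragraph: checking that restricting the relative construction \eqref{eq:llspr} to each component genuinely recovers the absolute construction of \eqref{eqn:differential_construction} and \autoref{prop:agreement}, and tracking the twist $\omega_{X_S/S}|_{C_i} = K_{C_i}(p)$ together with the several canonical-up-to-scalar identifications of fibres at $p$ carefully enough that the numerics $a_1' + a_2' = b'$ and the gluing fall out exactly. Everything after that is a formal application of \autoref{prop:agreement} and the definitions.
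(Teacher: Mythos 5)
Your proposal is correct and follows essentially the same route as the paper: identify each extremal section $r_{n_i}|_s$ with the ramification section of $W_i$ on $C_i$ twisted by $\omega_C|_{C_i} = K_{C_i}(p)$, apply \autoref{prop:agreement} to read off the orders of vanishing and the reduced values at $p$, check that the orders sum to $b'$, and deduce the gluing from $V^0\cap V^1 = \Lambda^0\cap\Lambda^1$ via \eqref{eqn:vlambdaswitch}. The componentwise identification you flag as the main obstacle is exactly what the paper asserts "by construction," and your numerics and gluing argument match the paper's.
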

\begin{proof}
  Without further comment, we identify $r_{n_1}|_s \in H^0(C, E'_{(r+1)n_1})$ with its image in $H^0(C_1, E'_{(r+1)n_1}|_{C_1})$.
  We have
  \[E'_{(r+1)n_1}|_{C_1} = E_{n_1} \otimes \det E_{n_1} \otimes \omega_C|_{C_1} = E_{n_1} \otimes \det E_{n_1} \otimes \Omega_C|_{C_1} \otimes \O_{C_1}(p),\]
  and by construction $r_{n_1}|_s$ is the image of the ramification section of $V_{n_1} \subset H^0(C_1, E_{n_1}|_{C_1})$ under the inclusion map
  \[ E_{n_1} \otimes \det E_{n_1} \otimes \Omega_C|_{C_1} \to E_{n_1} \otimes \det E_{n_1} \otimes \omega_C|_{C_1} = E'_{(r+1)n_1}|_{C_1}.\]
  By \autoref{prop:agreement}, the ramification section of $V_{n_1}$ has order of vanishing $(r+1)a+(r-i)$ at $p$, and hence $r_{n_1}|_s$ on $C_1$ has order of vanishing $(r+1)a+(r-i+1)$ at $p$.
  Likewise, $r_{n_2}|_s$ on $C_2$ has order of vanishing $(r+1)(b-a-1)+i$ at $p$.
  Since
  \[ (r+1)a+(r-i+1) + (r+1)(b-a-1) + i = (r+1)b = b',\]
  we see that $r_{n_1}|_s$ and $r_{n_2}|_s$ have complementary orders of vanishing, leading to an equality in condition~\eqref{ieq:eht} of \autoref{def:eht}.

  We must next ensure that condition~\eqref{gluing:eht} of \autoref{def:eht} holds, that is, the images of $r_{n_i}|_s$ in the appropriate twists of $E_{n_i}|_p$ are equal, at least up to scaling.
  By \autoref{prop:agreement}, the image of $r_{n_1}|_s$ in the appropriate twist of $E_{n_1}|_p$ spans the line $(V^0 \cap V^1)$, and the image of $r_{n_2}|_s$ spans the line $\Lambda^0 \cap \Lambda^1$.
  But by \eqref{eqn:vlambdaswitch}, we have $V^1 = \Lambda^0$ and $V^0 = \Lambda^1$, so the two lines are equal.
\end{proof}

Thanks to \autoref{lem:rameht}, we apply \autoref{prop:llseht}, and conclude that there exists a unique (1-dimensional) linked linear series $(R_n \mid n \in \Z)$ of $\mathcal E'$ on $X_S$ for which $R_{n_1'} = \det V_{n_1}$ and $R_{n_2'} = \det V_{n_2}$, at least if $S$ is reduced.
The transformation
\[ (V_n \mid n \in Z) \mapsto (R_n \mid n \in \Z)\]
defines a morphism
\begin{equation}\label{prop:mapreduced}
  \rho \from \mathcal U \to \mathcal G(1, \mathcal E'),
\end{equation}
as desired in \eqref{eq:Rtilde}.
Note that $\mathcal U$ has the reduced scheme structure.

The fruit of our labor is the following corollary.
Let $\mathcal U_0$ be the fiber over $0$ of $\mathcal U \to B$.
\begin{corollary}\label{prop:degeneration}
  Suppose $v \in \mathcal U_0$ is such that $\dim_v \mathcal U_0 = (r+1)(d-rg-1)$ and $v$ is isolated in the fiber of $\rho$, then the projection-ramification map $\Gr(r+1, H^0(X_\eta, \mathcal E_\eta)) \dashrightarrow \P H^0(X_\eta, \mathcal E_\eta \otimes \det E_\eta \otimes K_{X_\eta})$ is generically finite.
\end{corollary}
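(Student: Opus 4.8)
The plan is to deduce the corollary formally from the representability theorem \autoref{thm:lls} together with upper semicontinuity of fibre dimension. First I would observe that $\mathcal U \subseteq {\mathcal G}^{\rm simple}(r+1, \mathcal E)$: over $\eta$ this is automatic (all the $\theta_n$ restrict to isomorphisms there), while a point of $\mathcal U_0$ corresponds by construction to a refined EHT limit linear series, which is simple by \autoref{prop:llseht}. Since $\mathcal U$ is (the reduced scheme underlying) an open subscheme of ${\mathcal G}(r+1, \mathcal E)$, the local dimension $\dim_v \mathcal U_0$ coincides with the relative dimension of ${\mathcal G}^{\rm simple}(r+1, \mathcal E) \to B$ at $v$. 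With $k = r+1$ one has $k\bigl(d - k - r(g-1)\bigr) = (r+1)(d - rg - 1)$, so the hypothesis says precisely that ${\mathcal G}^{\rm simple}$ attains its guaranteed minimal relative dimension at $v$. By the last part of \autoref{thm:lls} (and the discussion following its statement), the morphism ${\mathcal G}^{\rm simple} \to B$ is then open at $v$, and in particular $v$ lies in the closure of the generic fibre $\Gr(r+1, H^0(X_\eta, \mathcal E_\eta))$.

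Next I would isolate the relevant irreducible variety inside $\mathcal U$. Let $\mathcal U_\eta$ denote the fibre of $\mathcal U \to B$ over $\eta$; by the definition of $\mathcal U$ (the locus removed in item~(1)) it is a dense open subset of the irreducible Grassmannian $\Gr(r+1, H^0(X_\eta, \mathcal E_\eta))$, and the morphism $\rho \from \mathcal U \to {\mathcal G}(1, \mathcal E')$ of \eqref{prop:mapreduced} restricts on it to the usual projection-ramification map $\rho_\eta$ for $X_\eta$. Let $Z$ be the closure of $\mathcal U_\eta$ in $\mathcal U$; it is irreducible with generic point that of $\mathcal U_\eta$, and $v \in Z$ by the previous step. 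The map $\rho$ is a morphism of $B$-schemes of finite type whose target ${\mathcal G}(1, \mathcal E')$ is separated (indeed projective) over $B$, so $\rho|_Z$ is a morphism of finite-type schemes to which upper semicontinuity of fibre dimension applies.

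Finally I would run the semicontinuity argument twice. By hypothesis $v$ is isolated in $\rho^{-1}(\rho(v))$, so $\dim_v (\rho|_Z)^{-1}(\rho(v)) = 0$, and hence the set $Z^\circ = \{\, z \in Z : \dim_z (\rho|_Z)^{-1}(\rho(z)) = 0 \,\}$ is a nonempty open subset of $Z$; as $Z$ is irreducible, $Z^\circ$ meets the dense open $\mathcal U_\eta$. Choose $w \in Z^\circ \cap \mathcal U_\eta$. Since $\rho(w)$ lies over $\eta$, the whole fibre $\rho^{-1}(\rho(w))$ lies over $\eta$, so it equals $\rho_\eta^{-1}(\rho_\eta(w))$; thus $\rho_\eta$ has a zero-dimensional fibre at $w$. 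Because $\mathcal U_\eta$ is irreducible and $\rho_\eta$ is defined on it, a second application of semicontinuity shows the general fibre of $\rho_\eta$ over its image is finite, i.e., the projection-ramification map on $X_\eta$ is generically finite onto its image. Unwinding $\mathcal E' = \mathcal E \otimes \det \mathcal E \otimes \omega_{X/B}$ identifies this with the asserted statement about $\Gr(r+1, H^0(X_\eta, \mathcal E_\eta)) \dashrightarrow \P H^0(X_\eta, \mathcal E_\eta \otimes \det E_\eta \otimes K_{X_\eta})$.

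The only genuine input beyond bookkeeping is \autoref{thm:lls}, which is what guarantees that a point of minimal relative dimension in ${\mathcal G}^{\rm simple}$ is a limit of points of the generic fibre; everything else is the formal interplay of irreducibility and semicontinuity of fibre dimension. I expect the point needing the most care to be the compatibility of the various reduced structures ($\mathcal U$, the domain of $\rho$, and the closure $Z$) with the dimension count, together with the verification that $\rho|_{\mathcal U_\eta}$ genuinely coincides with $\rho_\eta$ rather than merely being defined there.
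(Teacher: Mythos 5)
Your argument is correct and is exactly the paper's proof, just written out in full: the paper likewise invokes \autoref{thm:lls} (via the hypothesis $\dim_v\mathcal U_0=(r+1)(d-rg-1)$, which is $k(d-k-r(g-1))$ for $k=r+1$) to place $v$ in the closure of $\Gr(r+1,H^0(X_\eta,\mathcal E_\eta))$, and then concludes by upper semicontinuity of fibre dimension. The details you flag as needing care (simplicity of points of $\mathcal U_0$, the reduced structures, and the identification of $\rho|_{\mathcal U_\eta}$ with $\rho_\eta$) are handled correctly and are left implicit in the paper.
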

\begin{proof}
  If $\dim_v \mathcal U_0 = (r+1)(d-rg-1)$, then $v$ is in the closure of $\Gr(r+1, H^0(X_\eta, \mathcal E_\eta))$ by \autoref{thm:lls}.
  The statement now follows from the upper semi-continuity of fiber dimension.
\end{proof}

\subsection{Maximal variation for generic scrolls of high degree}\label{sec:llsproof}
We now have all the tools to prove \autoref{thm:rationalnormalscrolls}.
\begin{theorem}[\autoref{thm:rationalnormalscrolls}]
  \label{thm:actualrationalnormalscrolls}
  Let $E$ be a generic vector bundle on $\P^1$ of rank $r$ and degree $d = a(r-1) + b(2r-1)+1$, where $a, b$ are positive integers.
  Then the projection-ramification map is generically finite, and hence dominant, for $E$.
  In particular, the projection-ramification map is dominant for generic $E$ of degree $\geq (r-1)(2r-1)+1$.
\end{theorem}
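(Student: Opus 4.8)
The plan is to induct on the degree $d$ (we may assume $r \geq 2$), using a two-component degeneration and the linked-linear-series machinery of \autoref{sec:lls}--\autoref{sec:prlls} as the inductive step. First I would reduce to $d = a(r-1)+b(2r-1)+1$: since $\gcd(r-1,2r-1)=1$, the Sylvester--Frobenius (``Chicken McNugget'') theorem shows every integer $d-1 \geq (r-1)(2r-1)$ has this form with $a,b \geq 0$, which yields the final sentence of the theorem once the main statement is known for all $(a,b) \neq (0,0)$. The base cases are $(a,b)=(1,0)$, i.e. $d = r$, $E = \O(1)^r$ (\autoref{prop:segre}), and $(a,b)=(0,1)$, i.e. $d = 2r$, $E = \O(2)^r$ (\autoref{prop:222}). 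The inductive step is: if $\rho$ is generically finite (equivalently dominant) for the general bundle of degree $d' \geq r$, then likewise for the general bundle of degree $d'+(r-1)$ and of degree $d'+(2r-1)$. Iterating from $d=r$ reaches every $(a,b)$ with $a\geq1$, and from $d=2r$ every $(a,b)$ with $b\geq1$, so the induction covers all $(a,b)\neq(0,0)$.

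For the step, let $C = C_1 \cup_p C_2$ be a nodal union of two copies of $\P^1$ and $\pi \from X \to B$ a smoothing. Choose $\mathcal E$ on $X$ so that, in the notation of \autoref{sec:prlls} (with $b = n_2-n_1$ taken large enough to guarantee the required vanishings), $E_{n_1}|_{C_1}$ is the general bundle $F_1$ of degree $d'$ and $E_{n_2}|_{C_2}$ is $F_2 \cong \O(b)\oplus\O(b+1)^{r-1}$ for the ``$+(r-1)$'' step, respectively $F_2 \cong \O(b+1)\oplus\O(b+2)^{r-1}$ for the ``$+(2r-1)$'' step; the gluing of $E_0 = \mathcal E|_C$ at $p$ is taken general, and $\mathcal E_\eta$ is the general bundle of degree $d = w_1+w_2 = d_1 + d_2 - rb$, which equals $d'+(r-1)$, resp. $d'+(2r-1)$ (it suffices that $\mathcal E_\eta$ be ample, since on the locus of ample bundles---where source and target of $\rho$ have equal dimension by \autoref{thm:actualminimaldegree}---generic finiteness is an open condition, so one then descends to the general point). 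Let $v \in \mathcal U_0$ correspond to a general simple linked linear series: take $W_1 = V_{n_1}$ general in $\Gr(r+1, H^0(C_1, F_1))$ (so its vanishing sequence at $p$ is $(0,\dots,0,1)$ and transversality holds automatically), and $W_2 = V_{n_2}$ with the complementary refined vanishing sequence $(b-1,b,\dots,b)$, which after dividing by $t^{b-1}$ becomes a general point of the locus $U$ of \autoref{sec:prnongeneric} for $F_2(-(b-1)p) = \O(1)\oplus\O(2)^{r-1}$, resp. $\O(2)\oplus\O(3)^{r-1}$. By \autoref{prop:llseht}, $(W_1,W_2)$ is a simple linked linear series lying in $\mathcal U$.

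It remains to verify the two hypotheses of \autoref{prop:degeneration} at $v$. A simple linked linear series is determined by $(W_1, W_2)$, so $\mathcal U_0$ embeds in the product of the two Schubert cells, and the refined gluing conditions $\widetilde\phi(s^1_v) = s^2_{r+2-v}$ reduce, after choosing adapted bases, to the single condition that the leading terms at $p$ of $s^1_{r+1}$ and $s^2_1$ span the same line in $\P(E|_p) \cong \P^{r-1}$, cutting out codimension $r-1$; since the $W_1$-cell has dimension $(r+1)(d'-1)$ and the $W_2$-cell has the same dimension as the target of the reduced projection-ramification map, namely $r^2+r-2$, resp. $2r^2+2r-2$, a short computation gives $\dim_v \mathcal U_0 \leq (r+1)(d-1) = (r+1)(d - rg -1)$ with $g=0$, and equality follows from the universal lower bound in \autoref{thm:lls}; hence $v$ lies in the closure of $\Gr(r+1, H^0(X_\eta, \mathcal E_\eta))$. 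For isolation: via $\mathcal U_0 \hookrightarrow \{(W_1, W_2)\}$ the map $\rho$ sends $v$ to a linked linear series recovering on $C_1$ the projection-ramification section of $W_1$ and on $C_2$ the reduced projection-ramification section of $W_2$, and both the projection-ramification map for $F_1$ (generically finite by the inductive hypothesis, using equal dimensions) and the reduced projection-ramification map for $\O(1)\oplus\O(2)^{r-1}$, resp. $\O(2)\oplus\O(3)^{r-1}$ (dominant by \autoref{prop:domredexamples}, again with equal dimensions), are generically finite at the general point; hence the fiber of $\rho$ through $v$ is finite, and since $\rho$ is a $B$-morphism with $\rho(v)$ lying over $0$, that fiber is contained in $\mathcal U_0$, so $v$ is isolated in it. Applying \autoref{prop:degeneration} completes the inductive step.

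The main obstacle I anticipate is the dimension count: arranging the twists $n_1, n_2, w_1, w_2, b$ so that the two components carry exactly the building blocks above, and verifying that the refined gluing imposes precisely $r-1$ conditions, so that $\dim_v \mathcal U_0$ is the expected $(r+1)(d-1)$---this is what licenses \autoref{thm:lls} and hence the transfer of generic finiteness to the generic fiber. The isolation statement is comparatively soft once one has generic finiteness of the projection-ramification map on $C_1$ and of the reduced projection-ramification map on $C_2$. Combining the two base cases with the two inductive steps covers all $d = a(r-1)+b(2r-1)+1$ with $(a,b)\neq(0,0)$, and the Frobenius reduction gives dominance for all generic $E$ of degree $\geq (r-1)(2r-1)+1$.
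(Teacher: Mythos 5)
Your proposal is correct and follows essentially the same route as the paper: a two-component degeneration with the general bundle of degree $d'$ on one side and (a twist of) $\O(1)\oplus\O(2)^{r-1}$, resp.\ $\O(2)\oplus\O(3)^{r-1}$, on the other, the identification of $\mathcal U_0$ near $v$ with pairs of refined EHT data via \autoref{prop:llseht}, the count $\dim_v\mathcal U_0=(r+1)(d-1)$ matching the bound in \autoref{thm:lls}, and isolation via generic finiteness on each component using \autoref{prop:segre}, \autoref{prop:222}, and \autoref{prop:domredexamples}; the paper's inductive step is the slightly more general ``dominance for $d_1$ and $d_2$ gives dominance for $d_1+d_2-1$,'' of which yours is the special case $d_2\in\{r,2r\}$. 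One bookkeeping slip: you cannot set $E_{n_2}|_{C_2}=\O(b)\oplus\O(b+1)^{r-1}$ with $b=n_2-n_1$, since then $E_{n_1}|_{C_2}=\O\oplus\O(1)^{r-1}$ has sections, contradicting the vanishing that defines $n_1$; the fix is to place the degree-$(r-1)$ (resp.\ degree-$(2r-1)$) bundle on $C_2$ in a moderate twist and enlarge the gap $n_2-n_1$, shifting the vanishing sequences accordingly, exactly as the paper does with its bundle $E_2'$ of degree $d_2-1$.
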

\begin{proof}
  We say that generic dominance holds for rank $r$ and degree $d$ if the projection-ramification map is dominant (equivalently, generically finite) for the generic vector bundle of rank $r$ and degree $d$.
  The rank will be fixed throughout, so let us drop it from the discussion.
  Let us prove that if generic dominance holds for degrees $d_1$ and $d_2$, then it also holds for degree $d = d_1 + d_2 - 1 $.
  With the base cases $d_1 = r$ (\autoref{prop:segre}) and $d_2 = 2r$ (\autoref{prop:222}), this proves the theorem.

  Take $C_1 = C_2 = \P^1$, and let $C = C_1 \cup C_2$ be their nodal union at one point, which we take to be the point labeled $0$ on both $\P^1$s.
  Let $X \to B$ be a smoothing of $C$.
  Note that any vector bundle on $C$ is the restriction of a vector bundle on $X$.
  Therefore, by \autoref{prop:degeneration}, it suffices to construct a vector bundle $E$ of degree $d$ on $C$ and a linked linear series $(V_n \mid n \in \Z)$ on $E$ such that the following conditions hold for the point $v$ of $\mathcal G (r+1, E')$ represented by $(V_n \mid n \in \Z)$:
  \begin{enumerate}
  \item $\dim_v \mathcal G(r+1, E) = (r+1)(d-1)$,
  \item $\rho$ is defined at $v$, and
  \item $v$ is an isolated point in the fiber of $\rho$.
  \end{enumerate}

  We construct $E$ as follows.
  Let $E_1$ be a generic vector bundle of degree $d_1$ on $C_1$, and $E_2'$ a generic vector bundle of degree $d_2 - 1$ on $C_2$.
  Choose a generic isomorphism $E_1|_0 \cong E_2'|_0$, and construct the vector bundle $E$ on $C$ by gluing $E_1$ and $E_2'$ along this isomorphism.
  Choose $n_1 = a$ and $n_2 = b+a$ for sufficiently negative $a$ and sufficiently positive $b$.
  The isomorphism $E_1 |_0 \cong E_2'|_0$ yields isomorphisms, canonical up to scaling, of $E_1(m)|_0$ and $E_2'(n)|_0$ for any $m, n \in \Z$.

  Having constructed $E$, we must now construct $(V_n \mid n \in \Z)$.
  By \autoref{prop:llseht}, it is enough to construct $V_{n_1} \subset H^0(C_1, E_1 \otimes \O(a))$ and $V_{n_2} \subset H^0(C_2, E_2'(b-a))$, provided they define a refined EHT limit linear series.
  Let $V \subset H^0(C_1, E_1)$ be a generic $(r+1)$-dimensional vector space.
  Then it will have the vanishing sequence $(0, \dots, 0, 1)$.
  Hence, we have $V^0 = E|_0$ and $V^1 \subset E|_0$ is $1$-dimensional (see \autoref{sec:prnongeneric} for the definition of these two subspaces).
  Furthermore, the genericity of $V$ implies that $V^1$ is a general $1$-dimensional subspace.
  Define $E_2$ by the sequence
  \[ 0 \to E_2 \to E_2'(1) \to E'_2(1)|_0 / V^1 \to 0.\]
  Let $\Lambda \subset H^0(C_2, E_2'(1))$ be the image of a general $(r+1)$ dimensional subspace of $H^0(C_2, E_2)$.
  Then $\Lambda \subset H^0(C_2, E_2'(1))$ has the vanishing sequence $(0, 1, \dots, 1)$, with $\Lambda^0 = V^1$ and $\Lambda^1 = V^0$.
  Let $V_{n_1} \subset H^0(C_1, E_1 \otimes \O(a))$ be the image of $V$ and $V_{n_2} \subset H^0(C_2,E'_2(b-a))$ the image of $\Lambda$.
  Then $V_{n_1}$ has the vanishing sequence $(a, \dots, a, a+1)$, and $\Lambda$ the complementary vanishing sequence $(b-a-1, b-a, \dots, b-a)$.
  By the construction of $\Lambda$, there exist bases of $V_{n_1}$ and $V_{n_2}$ that satisfy the gluing condition at $0$.
  In conclusion, $V_{n_1}$ and $V_{n_2}$ form a refined EHT limit linear series, and hence define a linked linear series $v = (V_n \mid n \in \Z)$.

  It is easy to check that $\dim_v \mathcal G(r+1, E) = (r+1)(d-1)$.
  Indeed, for every linked linear series $w = (W_n \mid n \in \Z)$ in an open subset around $v$, the EHT limit linear series associated to $w$ determines $w$ and has the same vanishing sequence as $v$.
  In particular, $W_{n_1} \subset H^0(C_1, E_1(a))$ is the image of an $(r+1)$-dimensional subspace $V(w) \subset H^0(C_1, E_1)$ with vanishing sequence $(0, \dots, 0, 1)$, and $W_{n_2} \subset H^0(C_2, E_2'(b-a))$ is the image of an $(r+1)$-dimensional subspace $\Lambda(w)$ of $H^0(C_2, E'_2(1))$ with vanishing sequence $(0,1,\dots,1)$.
  The gluing condition, in turn, implies that $\Lambda(w)$ is the image of an $(r+1)$-dimensional subspace of the kernel of the map
  \[ E_2'(1) \to E_2'(1)/V(w)^1.\]
  By the genericity of $V$, the isomorphism type of the kernel of this map is constant around $v$; that is, the kernel is isomorphic to $E_2$.
  So, a dimension count for $\mathcal G(r+1, E)$ around $v$ gives
  \begin{align*}
    \dim_v \mathcal G(r+1, E) &= \dim \Gr(r+1, H^0(C_1, E_1)) + \dim \Gr(r+1, H^0(C_2, E_2))\\
                              &= (r+1)(d_1-1) + (r+1)(d_2-1)\\
                              &= (r+1)(d_1+d_2-2)\\
                              &= (r+1)(d-1).\\
  \end{align*}

  Finally, we must check that $v$ is an isolated point in the fiber of
  \[ \rho \from \mathcal G(r+1, E) \dashrightarrow \mathcal G(1, E \otimes \det E \otimes \omega_C).\]
  For any $w \in \mathcal G(r+1, E)$ in an open set around $v$ with $w \neq v$, either $V(w) \neq V$ or $\Lambda(w) \neq \Lambda$, where $V, \Lambda, V(w), \Lambda(w)$ are as above.
  By construction, $V \subset H^0(r+1, H^0(C_1, E_1))$ and $\Lambda \subset H^0(r+1, H^0(C_2, E'_2(1)))$ are isolated in their respective projection-ramification maps.
  Therefore, either $\rho_{C_1} (V(w)) \neq \rho_{C_1}(V)$ or $\rho_{C_2}(\Lambda(w)) \neq \rho_{C_2}(\Lambda)$.
  In either case, we obtain that $\rho(v) \neq \rho(w)$, and hence conclude that $v$ is an isolated point in the fiber of $\rho$.
\end{proof}

\section{The Projection-Ramification enumerative problem} \label{sec:enumerativeproblems}
In this section, we calculate the degree of the projection-ramification map for as many varieties of minimal degree as we can, leading to a proof of \autoref{thm:examples}.
After treating the relatively easy cases by hand, we relate the projection-ramification map for the Veronese surface and the quartic normal scroll with classical geometry of cubic plane curves.

\subsection{Rational normal curves}
\label{sec:arnc}
Let $X \subset \P^n$ be a rational normal curve.
Plainly, $X$ is incompressible, and hence the projection-ramification map
\[ \rho \from \Gr(2, n+1) \to \P^{2n-2}\]
is a regular map.
Therefore, we get
\begin{align*}
  \deg \rho &= c_1(\rho^* \O(1))^{2n-2}\\
            &= c_1(\O_{\Gr(r+1, n+1)}(1))^{2n-2} \\
            &= \frac{(2n-2)!}{n!(n-1)!}.
\end{align*}

\subsection{Quadric hypersurfaces} \label{sec:aquadricsurface}
A smooth quadric hypersurface $X \subset \P^{n}$ defined by a homogeneous quadric equation $F(X_{0},\dots,X_n) = 0$.
An easy calculation shows that the projection-ramification map
\[ \rho \from \P^n \to (\P^n)^*\]
is given in coordinates by
\[ p = [p_0: \dots:p_n] \mapsto \left[ \frac{\partial F}{\partial X_0}(p): \dots: \frac{\partial F}{\partial X_n}(p) \right].\]
In other words, it is the \emph{polarity isomorphism} induced by $F$, namely the isomorphism between a projective space and its dual given by the non-degenerate bilinear form associated to $F$.
In particular, we get $\deg \rho = 1$.

\subsection{The Veronese surface} \label{sec:veronese}
Let $\P^{2} \cong X \subset \P^{5}$ be the Veronese surface, the image of $\P^2$ under the complete linear series $\O(2)$.
In this case, the projection-ramification map
\begin{align*}
  \rho \from  \Gr(3,H^0(\P^2, \O(2))) \cong \Gr(3, 6) \dashrightarrow \P H^0(\P^2, \O(3))^* \cong \P^9
\end{align*}
can be described as follows.
Let $N \subset H^0(\P^2, \O(2))$ be a net of conics.
Then $\rho(N)$ corresponds to the cubic curve traced out by the nodes of the singular members of $N$, called the \emph{Jacobian} of $N$.
\begin{proposition}\label{prop:veronese}
  Let $R \subset \P^2$ be a general cubic.
  The fiber of $\rho$ over $R$ is in natural bijection with the set of non-trivial $2$-torsion line bundles on $R$.
  In particular, we have $\deg \rho = 3$.
\end{proposition}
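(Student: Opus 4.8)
The plan is to identify the fiber of $\rho$ over a general plane cubic $R$ with the nets of conics $N$ whose Jacobian equals $R$, and then to show that such nets are parametrized by the nontrivial $2$-torsion line bundles on $R$. First I would recall the classical description: a net $N = \langle Q_0, Q_1, Q_2 \rangle$ of conics determines a net of quadratic forms on $\P^2$, and the discriminant locus (the conics in $N$ that are singular) is a plane cubic $\Delta_N \subset \P^2 = \P N$; its Jacobian $\rho(N)$ is the locus of nodes, which is a cubic in the \emph{original} $\P^2$. The key structural fact, going back to work on nets of conics (and explained in Dolgachev's book \cite{dol:12}), is that a general net of conics carries a natural Cremona-type duality: the map sending a point $x \in \P^2$ to the conic in $N$ singular at $x$ (when $x$ lies on the Jacobian) realizes the Jacobian cubic $R$ as a curve isomorphic to the discriminant cubic $\Delta_N$, and the association $N \mapsto (R, \text{extra data})$ is what we must pin down.

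The heart of the argument is a Hilbert--Burch / Beilinson-type resolution. A general net of conics $N$ gives a $3 \times 3$ symmetric matrix $M$ of linear forms on $\P^2$ (after choosing coordinates on $N$), and $\det M$ cuts out the discriminant cubic; the adjugate of $M$ realizes the Jacobian $R$. Dualizing, I would instead present the net via a resolution of the form
\[
  0 \to \O_{\P^2}(-2)^3 \xrightarrow{\ M\ } \O_{\P^2}(-1)^3 \to \mathcal F \to 0,
\]
whose cokernel $\mathcal F$, restricted to $R$, is a line bundle $\eta$ of degree $0$; the symmetry of $M$ forces $\eta^{\otimes 2} \cong \O_R$, i.e. $\eta$ is $2$-torsion, and it is nontrivial because $\mathcal F$ is not $\O_R$ (the net is nondegenerate). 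This is the forward map: $N \mapsto (R, \eta)$. Conversely, given a general cubic $R$ and a nontrivial $2$-torsion $\eta \in \Pic^0 R$, the sheaf $i_* \eta$ on $\P^2$ (where $i\from R \hookrightarrow \P^2$) has a symmetric Hilbert--Burch resolution by the above shape — this uses that $\eta$ and $\eta$ are "self-dual up to twist", precisely the condition $\eta \cong \eta^\vee$, which holds exactly for $2$-torsion — and the resulting symmetric matrix $M$ of linear forms defines a net of conics with Jacobian $R$. One checks these two constructions are mutually inverse, which gives the bijection; since $\Pic^0 R[2] \setminus \{0\}$ has three elements, $\deg \rho = 3$.

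The main obstacle I anticipate is making the correspondence between nets and their Jacobians precise enough to control the fiber exactly — in particular, verifying (i) that for \emph{general} $R$ every symmetric linear resolution of a $2$-torsion sheaf on $R$ actually arises from a genuine net of conics (nondegeneracy: the three conics are linearly independent and the net has no base points, equivalently $M$ has generic rank $3$ and $\det M$ is not identically zero), and (ii) that distinct $2$-torsion classes give genuinely different nets, i.e. the map $N \mapsto (R,\eta)$ is injective. Both should follow from a parameter count together with the irreducibility of the relevant incidence variety: the space of nets of conics has dimension $9$, matching $\dim \P H^0(\P^2,\O(3)) = 9$, so $\rho$ is generically finite, and a monodromy or direct argument on the étale triple cover $\{(R,\eta)\} \to \{R\}$ shows the generic fiber has exactly $3$ points. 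I would also invoke \autoref{cor:lowdim} to know a priori that $\rho$ is generically finite, which reduces the task to computing the fiber cardinality.
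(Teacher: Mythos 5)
Your overall strategy --- attach to each net $N$ a nontrivial $2$-torsion class on its Jacobian cubic via a symmetric determinantal representation, check that $N \mapsto (R,\eta)$ is injective, and conclude $\deg\rho=3$ from generic finiteness (\autoref{cor:lowdim}) together with the transitivity of the monodromy on $\Pic R[2]\setminus\{0\}$ --- is a legitimate alternative in outline to the paper's argument, which instead works synthetically with the discriminant curve, the Hermite curve of Reye lines, and an explicit inverse map $(R,\eta)\mapsto N$. The $2$-torsion class you extract is, after the translation discussed below, the same $\eta$ that the paper obtains from the \'etale double cover $E \to D \cong R$.

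There is, however, a genuine gap precisely at the step you flag as the main obstacle, namely in matching the determinantal data to the map $\rho$. The natural symmetric $3\times 3$ matrix of linear forms attached to a net $N \subset \Sym^2 V^*$ is the Gram matrix $\sum x_i A_i$, whose entries are linear forms on $\P N$ (the plane parametrizing the conics of the net), not on the original $\P V$; its determinant cuts out the \emph{discriminant} cubic $D \subset \P N$, and its cokernel is (the pushforward of) a $2$-torsion line bundle on $D$, not on the Jacobian $R \subset \P V$. The matrix whose determinant actually is $R$ --- the Jacobian matrix $(\partial Q_i/\partial x_j)$, representing the map $Q \mapsto Q(x,\cdot)$ from $N$ to $V^*$ --- is not symmetric, since its source and target are different $3$-dimensional spaces. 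Correspondingly, your inverse construction misfires: a symmetric Hilbert--Burch resolution of $i_*\eta$ for a pair $(R,\eta)$ yields a matrix $M \in V^*\otimes \Sym^2 U^*$ for an \emph{auxiliary} $3$-space $U$, i.e.\ a net of conics on $\P U$ whose \emph{discriminant} is $R$; this is not a point of $\Gr(3,\Sym^2 V^*)$ with Jacobian $R$, so it does not lie in the fiber of $\rho$ over $R$, and the two constructions as written are not mutually inverse. To repair this you must both transport the $2$-torsion class across the canonical isomorphism $\tau \from D \to R$ (sending a singular conic to its node) and supply an actual reconstruction of the net \emph{on the original plane} from $(R,\eta)$ --- or else prove directly that distinct nets with Jacobian $R$ induce distinct classes. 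The paper's \autoref{lem:reye} is exactly this missing reconstruction: the net is recovered as the span of the singular conics $\langle a, a+\eta\rangle\cdot\langle a+\eta',a+\eta''\rangle$ for $a \in R$. (A minor further point: the equality of dimensions of source and target does not by itself give generic finiteness, but your appeal to \autoref{cor:lowdim} covers that.)
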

The rest of \autoref{sec:veronese} is devoted to the proof of this assertion.

For the proof, we recall some classical projective geometry of cubics and nets of conics from \cite[\S~3]{dol:12}.
To distinguish the various copies of $\P^2$ that naturally arise in this story, write $\P^2 = \P V$ for a 3 dimensional vector space $V$.
Let $N \subset H^0(\P V, \O(2)) = \Sym^2V$ be a general net of conics on $\P V$.
Given a point $x \in \P N^*$, we denote the associated conic by $Q_x$.

Associated to the net $N$ are three important cubic plane curves, namely the Jacobian curve, the discriminant curve, and the Hermite curve.
We have already seen the Jacobian curve $R \subset \P V$.
The \emph{discriminant curve} $D \subset \P N^*$ is the locus of $x \in \P N^*$ such that $Q_x$ is singular.
Since a pencil of conics contains three singular members, we see that $D$ is a cubic curve.
Note that if $Q_x$ is singular, then it is the union of two distinct lines in $\P V$.
A component line of $Q_x$ is called a \emph{Reye line}.
The \emph{Hermite curve} $E \subset \P V^*$ is the locus of Reye lines.
We leave it to the reader to check that it is a cubic curve.

The three cubic curves introduced above are inter-related.
First, we have an isomorphism $\tau \from D \to R$ defined by
\begin{equation}\label{eqn:DR}
  \tau \from x \mapsto \text{The singular point of $Q_x$}.
\end{equation}
Second, we have a degree 2 map $E \to D$ defined by
\[ \ell \mapsto \text{The $x \in D$ such that $Q_x$ contains $\ell$}.\]
Evidently, the fiber of this map over a given $x \in D$ corresponds to the two components of $Q_x$.
The (\'etale) degree 2 map $E \to D \cong R$ gives a non-trivial 2-torsion element $\eta \in \Pic(R)[2]$.
The element $\eta$ is characterized by the property that it is the unique non-trivial 2-torsion element whose pull-back to $E$ is trivial.

Denote by $H$ the hyperplane divisor class on $R \subset \P^2$.
\begin{lemma}\label{lem:reye}
  For every $a \in R$, the line joining $a$ and $a+\eta$ is a Reye line.
  Furthermore, this Reye line is a component of $Q_d$ where $d = \tau^{-1}(H-2a-\eta)$.
  Finally, the conjugate Reye line, namely the other component of $Q_d$, passes through the points $b$ and $b+\eta$ where $b \in R$ differs from $a$ by a non-trivial 2-torsion element other than $\eta$.
\end{lemma}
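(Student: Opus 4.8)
\noindent
The plan is to boil the statement down to a single assertion about how a Reye line meets $R$, and then to prove that assertion by an isogeny computation on the elliptic curve $R$. First I would fix a flex $O\in R$, making $R$ an elliptic curve with $H\sim 3O$ and with the property that three points of $R$ are collinear exactly when they sum to $O$; write $\eta_1=\eta$ and let $\eta_2,\eta_3$ be the other two nonzero $2$-torsion points, so $\eta_1+\eta_2+\eta_3=O$. The claim is that the whole lemma follows from the assertion
\[
(\star)\qquad\text{for every Reye line }\ell,\ \text{the two points of }\ell\cap R\text{ other than the node of its singular conic differ by }\eta,
\]
together with the (general-position) fact that the Hermite cubic $E\subset\P V^*$ is irreducible. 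Indeed, $(\star)$ says every Reye line has the form $\overline{u,u+\eta}$, so $E$ is contained in the image of the morphism $R\to\P V^*$ sending $a$ to the divisor $a+(a+\eta)+c\in|H|$ (with $c$ the residual point); that image is a plane cubic, hence equals the irreducible cubic $E$, giving the first assertion for all $a$. For a general $a$, the line $\overline{a,a+\eta}$ meets $R$ in $a,a+\eta,c$ and lies on a unique singular $Q_{x_0}$; by $(\star)$ its node is the point among $\{a,a+\eta,c\}$ making the other two differ by $\eta$, which for general $a$ is $c$, and collinearity gives $c=H-2a-\eta$, so $x_0=\tau^{-1}(H-2a-\eta)=d$. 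Applying $(\star)$ to the conjugate component of $Q_d$ and doing the analogous arithmetic (its residual points $p,q$ satisfy $p+q=H-c$ and $p-q=\eta$, whence $2p=2a$) identifies them as $a+\eta_2$ and $a+\eta_3$. Finally, each assertion is a closed condition on $a$ and the objects involved vary algebraically with $a$, so validity for general $a$ gives validity for every $a$.

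To set up $(\star)$ I would introduce the incidence curve $\mathcal C=\{(\ell,p):\ell\in E,\ p\in\ell\cap R\}$. Its projection to $E$ has degree $3$ and, because $E$ is a plane cubic, its projection to $R$ also has degree $3$. The node section $\mathcal C_{\mathrm{node}}=\{(\ell,\gamma(\ell))\}$, where $\gamma\colon E\to R$ is the étale degree-$2$ node map $E\to D\xrightarrow{\ \tau\ }R$ used to define $\eta$, projects to $R$ with degree $2$. Hence the residual curve $\mathcal C'=\overline{\mathcal C\setminus\mathcal C_{\mathrm{node}}}$ projects to $R$ \emph{birationally}: a general $p\in R$ lies on a unique Reye line $\ell_p$ of which it is not the node, and $p\mapsto m(p):=\text{(the other non-node point of }\ell_p\cap R)$ is a rational involution of $R$. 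Since the nodes $\gamma(\ell_p)=H-p-m(p)$ are not constant (they fill out $R$), $m$ cannot be of the form $p\mapsto c-p$, so $m$ is translation by a nonzero $t\in R[2]$; then $(\star)$ is precisely the equality $t=\eta$.

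To prove $t=\eta$ I would take $\psi\colon R\to E$, $p\mapsto\ell_p$, and choose the origin of the elliptic curve $E$ to be $\psi(O)$, so that $\psi$ is an isogeny with $\ker\psi=\langle t\rangle$ (its fibres are the orbits $\{p,p+t\}$). Collinearity of $\gamma(\ell_p),p,m(p)=p+t$ gives $\gamma(\psi(p))=H-2p-t$; writing $\gamma=(\text{translation})\circ\gamma_1$ with $\gamma_1\colon E\to R$ the origin-preserving isogeny underlying $\gamma$, the composite $\gamma_1\circ\psi$ is origin-preserving, so this forces $\gamma_1\circ\psi=[-2]_R$. Using $[2]_R=\gamma_1\circ\widehat{\gamma_1}$ for the dual isogeny $\widehat{\gamma_1}\colon R\to E$, and $[-1]_R\circ\gamma_1=\gamma_1\circ[-1]_E$, this reads $\gamma_1\circ\psi=\gamma_1\circ\big([-1]_E\circ\widehat{\gamma_1}\big)$, so $\psi$ and $[-1]_E\circ\widehat{\gamma_1}$ differ by a $(\ker\gamma_1)$-valued, hence constant, translation; as both are origin-preserving, $\psi=[-1]_E\circ\widehat{\gamma_1}$. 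Therefore $\langle t\rangle=\ker\psi=\ker\widehat{\gamma_1}=\ker(\gamma_1^{*})=\ker(\gamma^{*})=\langle\eta\rangle$, the last steps because $\widehat{\gamma_1}$ corresponds to $\gamma_1^{*}$ on $\Pic^0(R)$, translations act trivially on $\Pic^0(R)$, and $\eta$ was characterized as the generator of $\ker(\gamma^{*})$. Hence $t=\eta$, which is $(\star)$, and the lemma follows.

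The only step that is not pure formalism is the birationality of $\mathcal C'\to R$ in the middle of the argument — equivalently, that a general point of $R$ lies on exactly one Reye line not noded there — which is what makes the ``residual partner'' involution $m$ exist and forces it to be a translation. Granting that, the identification $t=\eta$ is dictated by the relation $\gamma_1\circ\psi=[-2]_R$ together with the defining property of $\eta$. I expect the attendant general-position hypotheses (that $R$ and $E$ are smooth irreducible cubics, that $\mathcal C'$ is reduced, and the genericity of $a$ and of $p$ used above) to be where one must be careful; the remaining ingredients — the group-law bookkeeping of the first paragraph, the degree counts, and the standard facts about dual isogenies — are routine.
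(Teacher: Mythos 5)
Your proposal is correct, but the heart of it — pinning down the key claim that the two non-node points of a Reye line on $R$ differ by $\eta$ — is proved by a genuinely different method than the paper's. Both arguments start from the same incidence count: through a general point $y\in R$ there pass three Reye lines, two of which are the components of $Q_{\tau^{-1}(y)}$ and hence are noded at $y$. The paper exploits this directly inside $\P V^*$: the line dual to $y$ meets the Hermite cubic $E$ in $\ell$ and the two preimages $y_1,y_2$ of $y$ under $E\to R$, so collinearity on $E$ gives $y_1+y_2\sim z_1+z_2$, which pushes forward to $2y\sim 2z$ on $R$ and simultaneously shows that the pullback of $y-z$ to $E$ is trivial; the defining property of $\eta$ then forces $y-z=\eta$ in a few lines of divisor-class bookkeeping on plane cubics. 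You instead package the incidence count as a degree count $3=1+2$ on the incidence curve, extract the ``residual partner'' involution $m$, argue it is translation by some $t\in R[2]$, and then identify $t=\eta$ by showing $\gamma_1\circ\psi=[-2]_R$ and hence $\psi=-\widehat{\gamma_1}$, so that $\ker\psi=\ker(\gamma^*)=\langle\eta\rangle$. Your route is more structural (it identifies the map $p\mapsto\ell_p$ as the negative dual isogeny of the node map, which is a nice fact in its own right) but invokes heavier machinery — rigidity, dual isogenies, $\gamma_1\circ\widehat{\gamma_1}=[2]$ — where the paper needs only push-pull of divisor classes; the general-position hypotheses you flag (reducedness of the residual incidence curve, the three Reye lines through a general point of $R$ being distinct) are exactly the ones the paper also uses implicitly, and your handling of the ``for every $a$'' extension and of the conjugate component is, if anything, more explicit than the paper's. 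Both proofs are valid.
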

\begin{proof}
  Let $\ell$ be a general Reye line, and let $d \in D$ be such that $\ell$ is a component of $Q_d$.
  Let $x = \tau(d) \in R$ be the singular point of $Q_d$.
  Note $\ell \cap R$ consists of three points, one of which is $x$.
  It suffices to show that the other two, say $y$ and $z$, differ by $\eta$.

  The point $y$ defines a line in $\P V^*$.
  This line intersects $E \subset \P V^*$ in three points, one of which is $\ell$, and the other two are the two components of $Q_{\tau^{-1}(y)}$, namely the two pre-images of $y \in R$ under the double covering $E \to R$.
  Call these two points $y_1$ and $y_2$.
  Define $z_1$ and $z_2$ analogously.
  By construction, the triplets $y_1, y_2, \ell$ and $z_1, z_2, \ell$ are collinear triplets on $E \subset \P V^*$, and therefore we have the linear equivalence
  \[ y_1 + y_2 \sim z_1 + z_2\]
  on $E$.
  By pushing this forward to $R$, we get
  \[ 2y \sim 2z.\]
  Therefore, $y - z$ is a (non-trivial) 2-torsion element in $\Pic(R)$.
  However, the pull-back of $y-z$ is trivial on $E$, and hence $y - z = \eta$.

  Finally, let $m$ be the Reye line conjugate to $\ell$.
  Then it contains $x$, and two other points of $R$, say $y'$ and $z'$.
  By what we just proved, $y' - z' = \eta$.
  But we also have $y' + z' \sim y + z$.
  Hence $y - y'$ is a 2-torsion element, non-trivial, and distinct from $\eta$.
  The proof is now complete.  
\end{proof}

We now have all the tools to prove \autoref{prop:veronese}.
\begin{proof}[Proof of \autoref{prop:veronese}]
  Let $U \subset \P H^0(\P^2, \O(3))^*$ be the locus of smooth cubic curves, $J \to U$ be the universal Picard scheme, $J[2] \subset J$ the closed subscheme of 2-torsion classes, and $J[2]^* \subset J[2]$ the open and closed subscheme of non-trivial 2-torsion classes.
  The projection-ramification map for the Veronese surface factors as
  \begin{alignat*}{2}
    \rho \from \Gr(3, H^0(\P^2, \O(2))) &\dashrightarrow J[2]^* &&\dashrightarrow \P H^0(\P^2, \O(3))^*\\
    N &\mapsto (R, \eta) &&\mapsto R.
  \end{alignat*}
  We construct $J[2]^* \to \Gr(3,H^0(\P^2, \O(2)))$ inverse to the first map, which shows that $\deg \rho = 3$, and identifies the fibers of $\rho$ as non-trivial 2-torsion points.
  Given $(R, \eta) \in J[2]^*$, we need to construct a net $N$ of conics with Jacobian $R$.
  We use \autoref{lem:reye}, which tells us the singular elements of this net in terms of $R$ and $\eta$.
  Let $\{\eta, \eta', \eta''\}$ be the three non-trivial 2-torsion line bundles on $R$.
  Define the map $R \to \P H^0(\P^2, \O(2))^*$ by
  \[ R \ni a \mapsto \left(\langle {a, a+\eta} \rangle\right) \cdot \left(\langle {a+\eta', a+\eta''} \rangle \right),\]
  where $\langle {p,q} \rangle$ denotes the line joining $p$ and $q$.
  We leave it to the reader to check that the image of $R$ is a plane cubic curve.
  The span of the image of $R$ is the desired net $N$.  
\end{proof}

\subsection{Quartic surface scroll}\label{sec:quartic_scroll}
Our next objective is to prove that $ \deg \rho_{X} = 2$ for a generic quartic surface scroll $X \subset \P^{5}$.
We begin by recasting $\rho_X$ in terms of nets of conics on $\P^2$, and bring in the projective geometry introduced in \autoref{sec:veronese}.

The generic quartic surface scroll $X \subset \P^5$ is isomorphic to $\P^1 \times \P^1$, embedded by the complete linear system associated to $\O(1,2)$.
Say $\P^1 \times \P^1 = \P U \times \P V$, where $U$ and $V$ are two-dimensional vector spaces.
Then the projection-ramification map is a $\PGL(U) \times \PGL(V)$-equivariant map
\[
  \Gr(3, U \otimes \Sym^2 V) \dashrightarrow \P(U \otimes \Sym^4 V)^*.
\]
We take the quotient of both sides by the $\PGL(U) \times \PGL(V)$-action.
We begin by identifying the two quotients.

Let $S$ be a 3-dimensional quadratic space, that is, a vector space with a non-degenerate quadratic form $q$.
Then we have $\Aut(S) =  \operatorname{O}(q) \cong \operatorname{O(3)}$.
The projective space $\P S$ is isomorphic to $\P^2$, and it comes with a distinguished smooth conic $Q \subset \P S$.
The automorphism group of the pair $(\P S, Q)$ is $\Aut(Q) \cong \PGL_2$.
\begin{lemma}\label{lem:quotgrass}
  The quotient $\Gr(3, U \otimes \Sym^2V) / \PGL(U) \times \PGL(V)$ is birational to the quotient $\Hilb^3(\P S) / \Aut S$.
\end{lemma}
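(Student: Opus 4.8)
The plan is to write down an explicit pair of mutually inverse birational maps, exploiting the three decomposable tensors contained in a general net. Recall that inside $U\otimes\Sym^2 V$ the decomposable tensors $u\otimes q$ form (projectively) a Segre threefold $\cong\P U\times\P\Sym^2 V$, of degree $\binom{3}{1}=3$; hence a general $3$-dimensional subspace $N\subset U\otimes\Sym^2 V$ contains exactly three lines of decomposable tensors, spanned by elements $u_1\otimes q_1$, $u_2\otimes q_2$, $u_3\otimes q_3$ (with $u_i\in U$ and $q_i\in\Sym^2 V$ defined up to scaling and, for general $N$, the points $[q_i]$ pairwise distinct), and $N=\langle u_1\otimes q_1,u_2\otimes q_2,u_3\otimes q_3\rangle$.

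First I would dispose of the target side. Over our algebraically closed field every non-degenerate $3$-dimensional quadratic space is isomorphic to $\Sym^2 V$ equipped with the discriminant form, under which the distinguished conic $Q\subset\P S$ becomes the image of the $2$-uple embedding $\P V\hookrightarrow\P\Sym^2 V$. The group $\Aut S=\operatorname{O}(S)$ acts on $\P S$ through $\operatorname{PO}(S)=\operatorname{O}(S)/\{\pm1\}$, and the natural homomorphism $\PGL(V)\to\Aut(\P S,Q)$ is an isomorphism onto $\Aut(\P S,Q)=\operatorname{PO}(S)$: it is faithful because $\PGL(V)$ already acts faithfully on $\P\Sym^2 V$, and its image is all of $\Aut(\P S,Q)=\Aut(Q)\cong\PGL_2$ because $\PGL(V)$ acts on $Q\cong\P V$ as the full automorphism group. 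Consequently $\Hilb^3(\P S)/\Aut S$ equals $\Hilb^3(\P\Sym^2 V)/\PGL(V)$, so it is enough to produce a $\PGL(V)$-equivariant birational map $\Gr(3,U\otimes\Sym^2 V)/\PGL(U)\dashrightarrow\Hilb^3(\P\Sym^2 V)$.

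Here is the map. Sending a general $N$ to the $0$-cycle $[q_1]+[q_2]+[q_3]$ of the second coordinates of its three decomposable lines defines a rational map $\Gr(3,U\otimes\Sym^2 V)\dashrightarrow\Sym^3\P\Sym^2 V=\Hilb^3(\P\Sym^2 V)$. It is invariant under $\PGL(U)$, which acts on the Segre threefold only through the first factor and hence moves the $[u_i]$ but fixes the $[q_i]$; and it is $\PGL(V)$-equivariant, since $\PGL(V)$ acts through the second factor. So it descends to $\overline\rho\colon\Gr(3,U\otimes\Sym^2 V)/\PGL(U)\dashrightarrow\Hilb^3(\P\Sym^2 V)$. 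For the inverse, given general $[q_1],[q_2],[q_3]\in\P\Sym^2 V$, choose any three distinct $[u_1],[u_2],[u_3]\in\P U$ and set $N=\langle u_1\otimes q_1,u_2\otimes q_2,u_3\otimes q_3\rangle$; because $\PGL(U)$ acts simply transitively on ordered triples of distinct points of $\P U$, the class of $N$ in $\Gr(3,U\otimes\Sym^2 V)/\PGL(U)$ is independent of the chosen $[u_i]$ and of how the $[q_i]$ are matched with them. This construction is inverse to $\overline\rho$ on dense opens: the span of the three decomposable lines of $N$ is $N$ itself, while conversely a general $N$ built from three general $[q_i]$ is a general $3$-plane and so, by the degree computation, has no decomposable line beyond the three we used. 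Finally, since $\overline\rho$ is $\PGL(V)$-equivariant, passing to $\PGL(V)$-quotients gives the asserted birational equivalence $\Gr(3,U\otimes\Sym^2 V)/(\PGL(U)\times\PGL(V))\dashrightarrow\Hilb^3(\P\Sym^2 V)/\PGL(V)=\Hilb^3(\P S)/\Aut S$.

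The one genuinely geometric ingredient — and the step I expect to require the most care — is the enumerative claim underlying the first paragraph: that the Segre threefold $\P U\times\P\Sym^2 V$ has degree $\binom{3}{1}=3$, so that a general $\P^2$ meets it in exactly three reduced points, and symmetrically that the $\P^2$ spanned by three general points of it meets it nowhere else. In characteristic zero this follows from the Segre degree formula together with the Kleiman--Bertini theorem. The remaining bookkeeping — that the generic stabilizer of the $\PGL(U)\times\PGL(V)$-action on $\Gr(3,U\otimes\Sym^2 V)$ is trivial, so that the dimension count $9-3-3=3=6-3=\dim\Hilb^3(\P S)/\Aut S$ is consistent — is routine and can be read off from the explicit description of the three decomposable lines.
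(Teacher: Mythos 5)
Your argument is correct, but it takes a genuinely different route from the paper's. The paper realizes a general net as an element of $W^* \otimes U \otimes \Sym^2 V$ for an auxiliary $3$-dimensional space $W$, i.e.\ as a $2\times 3$ matrix with entries in $S = \Sym^2 V$, and then invokes the determinantal (Hilbert--Burch) description of three general points of a projective plane: they are cut out by the $2\times 2$ minors of a $2\times 3$ matrix of linear forms, unique up to row and column operations, so the quotient by $\GL(W)\times\GL(U)$ is literally $\Hilb^3(\P S)$. You instead intersect $\P N$ with the Segre threefold $\P U \times \P\Sym^2 V \subset \P(U\otimes\Sym^2 V)$ of degree $3$, extract the three decomposable tensors $u_i \otimes q_i$, and send $N$ to $[q_1]+[q_2]+[q_3]$, disposing of the $\PGL(U)$-quotient by simple transitivity on triples of distinct points of $\P U$. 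Both constructions give $\PGL(V)$-equivariant birational maps, so both prove the lemma; your handling of $\Aut(\P S, Q) \cong \PGL(V)$ matches the paper's. Two remarks. First, the two identifications are \emph{not} the same map: writing $N = \langle u_1\otimes q_1, u_2\otimes q_2, u_3\otimes q_3\rangle$ as a matrix, its $2\times 2$ minors are nonzero scalar multiples of the products $q_iq_j$, so the paper's three points are the pairwise intersections of the lines determined by the $q_i$ --- the polar conjugate of the triangle with vertices $[q_1],[q_2],[q_3]$ --- and the two identifications differ by the involution $\tau$ of \eqref{eqn:richelot}. This is harmless for the lemma as stated, but your identification could not be substituted verbatim into the computation of \autoref{lem:mu}, which uses the determinantal presentation to apply \eqref{eqn:Rmatrix}. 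Second, your inverse tacitly needs the three decomposable points of a general $\P N$ to be non-collinear, so that they span $N$; this is true and easy (a line meeting the Segre threefold, which is cut out by quadrics, in three points would lie on it, forcing $\P N \cap \Sigma$ to be infinite), but it deserves a sentence rather than being folded into ``routine bookkeeping.''
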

\begin{proof}
  Let $W$ be a 3-dimensional vector space.
  We have a birational isomorphism
  \begin{align*}
    \Gr(3, &U \otimes \Sym^3 V) / \PGL(U) \times \PGL(V)\\
           &\sim (W^* \otimes U \otimes \Sym^2 V) / \GL(W) \times \GL(U) \times \GL(V).
  \end{align*}
  Interpret the space $(W^* \otimes U \otimes \Sym^2 V) / \GL(W) \times \GL(U)$ as the space of $2 \times 3$ matrices with entries in $\Sym^2 V$, modulo row and column transformations.
  Set $S = \Sym^2 V$; it has a canonical (up to scaling) quadratic form given by the conic $Q \cong \P V\subset \P S$ embedded by $\O(2)$.
  We can then interpret $(W^* \otimes U \otimes \Sym^2 V) / \GL(W) \times \GL(U)$ as the space of $2 \times 3$ matrices with entries in $S$.
  We have a rational map
  \begin{align*}
    (W^* \otimes U \otimes \Sym^2 V) / \GL(W) \times \GL(U) &\sim \Hilb^3(\P S) \\
    \text{$2 \times 3$ matrix $M$} &\mapsto \text{Vanishing locus of $2\times 2$ minors of $M$}.
  \end{align*}
  It is easy to check that this map is a birational isomorphism---a general triple of points in $\P S$ is the zero locus of $2 \times 2$ minors of a matrix of linear forms, which is uniquely determined up to row and column transformations.
  By taking a further quotient by $\GL(V)$, we finish the proof.  
\end{proof}

\begin{lemma}\label{lem:quotram}
  The quotient $\P(U \otimes \Sym^4 V)^* / \PGL(U) \times \PGL(V)$ is birational to the quotient $\Gr(2, (\Sym^2 S)/q) / \Aut S$.
\end{lemma}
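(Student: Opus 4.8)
The plan is to mimic the structure of the proof of \autoref{lem:quotgrass}, realizing both sides as quotients of a common linear-algebraic model. First I would unwind the target: a point of $\P(U \otimes \Sym^4 V)^*$ is (up to scale) an element of $U \otimes \Sym^4 V$, and taking the full $\GL$-quotients as in \autoref{lem:quotgrass} gives a birational identification
\[
  \P(U \otimes \Sym^4 V)^* / \PGL(U) \times \PGL(V) \sim (U \otimes \Sym^4 V) / \GL(U) \times \GL(V).
\]
The key move is to recognize $\Sym^4 V$ inside $\Sym^2 S$, where $S = \Sym^2 V$ carries its canonical (up to scaling) quadratic form $q$ given by the conic $Q = \P V \subset \P S$. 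Indeed the multiplication map $\Sym^2(\Sym^2 V) \to \Sym^4 V$ is $\GL(V)$-equivariant and surjective, and its kernel is exactly the line spanned by $q$ (this is the classical apolarity statement: a quadric in $\P S$ containing $Q$ with multiplicity, i.e.\ proportional to $q$, is the only relation). Thus $\Sym^4 V \cong (\Sym^2 S)/q$ as $\GL(V) = \Aut(Q)$-representations.

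Next I would feed this in: an element of $U \otimes \Sym^4 V$ becomes an element of $U \otimes \big((\Sym^2 S)/q\big)$, i.e.\ a $2$-dimensional (for generic choice) subspace of $(\Sym^2 S)/q$ once we quotient by $\GL(U)$ — just as a generic element of $W^* \otimes U \otimes \Sym^2 V$ became a $2\times 3$ matrix up to row/column ops in the previous lemma. More carefully, $(U \otimes M)/\GL(U)$ for a vector space $M$ is birational to $\Gr(2, M)$: a generic tensor $u_1 \otimes m_1 + u_2 \otimes m_2$ determines, and is determined up to the $\GL(U)$-action, by the plane $\langle m_1, m_2\rangle \subset M$. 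Applying this with $M = (\Sym^2 S)/q$ identifies $(U \otimes \Sym^4 V)/\GL(U) \times \GL(V)$ birationally with $\Gr(2, (\Sym^2 S)/q)/\GL(V)$. Since the $\GL(V)$-action on $\P S$ and hence on $\Sym^2 S$ factors through $\PGL(V) = \Aut(Q) = \Aut(S)$ (here $\Aut(S)$ means automorphisms of the quadratic space $(S,q)$, which is $\operatorname{O}(q)$ — one checks $\PGL_2 = \operatorname{PO}(3)$, so the actions on $\P S$ and on $\P\big((\Sym^2 S)/q\big)$ coincide), we arrive at $\Gr(2, (\Sym^2 S)/q)/\Aut S$, as claimed.

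The main obstacle I anticipate is the precise bookkeeping of the equivariance and the scalar actions — in particular making sure that the $\GL(V)$-action descends correctly to the $\Aut(S)$-action on $\Sym^2 S / q$ and that no extra scalars are lost or gained when passing between $\P(U\otimes\Sym^4 V)^*$ and $(U\otimes\Sym^4 V)$, and when identifying $(U\otimes M)/\GL(U)$ with $\Gr(2,M)$. One must verify that $q$ is genuinely $\GL(V)$-semi-invariant (it is, since $Q \subset \P S$ is the $\PGL(V)$-invariant conic), so the quotient $(\Sym^2 S)/q$ makes sense equivariantly and $\Gr(2, (\Sym^2 S)/q)$ carries an $\Aut S$-action. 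A secondary point requiring care is genericity: the birational identifications $(U \otimes M)/\GL(U) \sim \Gr(2,M)$ and the matrix-model identification only hold on dense opens, so I would phrase everything as birational equivalences and note that the generic quartic scroll and the generic point of the relevant parameter space land in these opens, exactly as in \autoref{lem:quotgrass}. With these checks in hand, the chain of birational isomorphisms assembles to the statement.
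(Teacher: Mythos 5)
Your argument is correct and follows essentially the same route as the paper: the birational identification $(U \otimes M)/\GL(U) \sim \Gr(2, M)$, the observation that $q$ spans the kernel of the surjection $\Sym^2 S \to \Sym^4 V$ so that $\Sym^4 V \cong (\Sym^2 S)/q$ equivariantly, and the descent of the $\GL(V)$-action to $\Aut S$. The extra care you take with scalars and equivariance is fine but not needed beyond what the paper records.
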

\begin{proof}
  We have the birational isomorphism
  \begin{align*}
    (U \otimes \Sym^4 V) / \GL(U) &\sim \Gr(2, \Sym^4 V).
  \end{align*}
  Note that $q \in \Sym^2S$ spans the kernel of the natural surjection $\pi \from \Sym^2 S \to \Sym^4 V$.
  So the claimed birational isomorphism is given by sending a two dimensional subspace $L \subset \Sym^4 V$ to the image of $\pi^{-1}L$ in $\Sym^2S/q$.
\end{proof}

Via the birational isomorphisms in \autoref{lem:quotgrass} and \autoref{lem:quotram}, the projection-ramification map $\mu$ transforms into an $\Aut(S)$-equivariant map
\[
  \mu \from \Hilb^3\P S \dashrightarrow \Gr(2, \Sym^2S/q).
\]
We now describe this map $\mu$.
To ease notation, we denote a linear form and its vanishing locus by the same letter.
Let $\xi \in \Hilb^3 \P S$ be a general point corresponding to the three vertices of the triangle formed by three lines $L_i$ for $i = 1, 2, 3$.
Two lines $L_i$ and $L_j$ define a pencil of quadratic forms on $Q$.
Let $R_{ij}$ be the line whose intersection with $Q$ is the ramification divisor of the pencil $\langle  L_i, L_j \rangle$.
It is easy to check that the quadrics $L_1 R_{23}$, $L_2 R_{13}$, and $L_3R_{12}$ span a 3-dimensional subspace of $\Sym^2 S$ that contains the quadric $q$.
\begin{lemma}\label{lem:mu}
  In the setup above, the image of $\xi$ under $\mu$ is the image of $\langle  L_1R_{23}, L_2R_{13},L_3R_{12} \rangle$ in $\Sym^2S/q$.
\end{lemma}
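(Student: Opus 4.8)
The plan is to make the two identifications of \autoref{lem:quotgrass} and \autoref{lem:quotram} completely explicit on a general triangle and then compute $\mu(\xi)$ directly from the scroll ramification formula \eqref{eqn:Rmatrix}. First I would fix coordinates so that $\xi\subset\P S$ is the coordinate triangle, with the three coordinate lines as edges $L_1,L_2,L_3$, and take $Q\subset\P S$ a general conic. Restriction of linear forms to $Q\cong\P^1$ (a degree-$2$ map, since $\O_{\P S}(1)|_Q=\O_Q(2)$) identifies $S$ with the space of binary quadratics; write $\ell_i=L_i|_Q$. Under the same identification $\Sym^2 S$ becomes the space of conics on $\P S$, the map $\pi$ of \autoref{lem:quotram} becomes restriction to $Q$, and $q$ spans its kernel. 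By definition $R_{ij}|_Q$ is the ramification divisor of the pencil of binary quadratics $\langle\ell_i,\ell_j\rangle$, which is cut out by the Wronskian $[\ell_i,\ell_j]:=\ell_i'\ell_j-\ell_i\ell_j'$; hence the conic $L_iR_{jk}$ restricts to $Q$ as the binary quartic $\ell_i[\ell_j,\ell_k]$.

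Next I would recover the subspace $W\subset U\otimes\Sym^2 V$ attached to $\xi$ by \autoref{lem:quotgrass}: the ideal of $\xi$ in degree $2$ is the net spanned by the products $L_1L_2,L_1L_3,L_2L_3$, so by the Hilbert--Burch theorem there is a $2\times 3$ matrix $M$ of linear forms whose $2\times2$ minors are these three conics, and $M$ is (up to the residual group action) the matrix occurring in the proof of \autoref{lem:quotgrass}, with $W$ the span of its columns inside $U\otimes\Sym^2 V=H^0(X,\O_X(1))$. Presenting $X=\P U\times\P V$ as the scroll $\P\!\left(\O_{\P V}(2)\oplus\O_{\P V}(2)\right)\to\P V$ with $\P V$ identified with $Q$, the columns of $M$ become an explicit basis $w_0,w_1,w_2$ of a rank-$3$ subspace of $H^0(\P V,\O(2)\oplus\O(2))$, whose evaluation matrix $(m_{ij})$ has the $\ell_i$ (and $0$) as entries. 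Then \eqref{eqn:Rmatrix} with $r=2$ gives $R(W)=A\,X_1+B\,X_2$, where $A$ and $B$ are the $3\times3$ determinants of the matrices with columns $m_{i1},m_{i2},\partial_t m_{i1}$ and $m_{i1},m_{i2},\partial_t m_{i2}$ respectively, and the pencil of binary quartics cut by $R(W)$ on the fibers of the projection $X\to\P U$---which is exactly $\mu(\xi)$ under the above identifications---is $\langle A,B\rangle$. Expanding these determinants for the explicit $M$, one finds that $A$ and $B$ are, up to nonzero scalars, two of the three binary quartics $\ell_1[\ell_2,\ell_3]$, $\ell_2[\ell_1,\ell_3]$, $\ell_3[\ell_1,\ell_2]$; the remaining one lies in their span by the identity
\[
\ell_1[\ell_2,\ell_3]-\ell_2[\ell_1,\ell_3]+\ell_3[\ell_1,\ell_2]=0,
\]
which is the expansion of $\det\!\begin{pmatrix}\ell_1&\ell_2&\ell_3\\ \ell_1&\ell_2&\ell_3\\ \ell_1'&\ell_2'&\ell_3'\end{pmatrix}=0$. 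Thus $\mu(\xi)=\langle A,B\rangle$ is the image of $\langle L_1R_{23},L_2R_{13},L_3R_{12}\rangle$ in $\Sym^2 S/q$, as claimed. The same identity also proves the statement preceding the lemma: it shows $L_1R_{23}-L_2R_{13}+L_3R_{12}$ vanishes on $Q$, hence equals $c\,q$ for a scalar $c$, and $c\neq0$ since evaluating at the vertex $L_2\cap L_3$ (which lies off $Q$ but on $L_2$ and $L_3$) leaves only the nonvanishing term $L_1R_{23}$; so the three conics span a $3$-dimensional subspace of $\Sym^2 S$ containing $q$.

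The main difficulty is bookkeeping rather than geometry. One has to pin down the correspondence $\xi\leftrightarrow W$ precisely from the compressed argument in \autoref{lem:quotgrass}, keeping track of the $\GL(W)\times\GL(U)\times\GL(V)$-ambiguities and the Grothendieck-projectivization conventions, and check that \eqref{eqn:Rmatrix} (the explicit form of \eqref{eqn:differential_construction}) is applied with $X$ correctly exhibited as a $\P^1$-bundle over $\P V=Q$ and $W\subset H^0(X,\O_X(1))=H^0(\P V,E)$. Once the dictionary is fixed the calculation is short, and the Wronskian (transvectant) identity does the essential work.
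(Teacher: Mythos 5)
Your proposal is correct and follows essentially the same route as the paper: present the ideal of $\xi$ by a $2\times 3$ matrix of linear forms (the paper writes the explicit matrix $\bigl(\begin{smallmatrix} L_1 & 0 & L_3\\ 0 & L_2 & L_3\end{smallmatrix}\bigr)$ rather than invoking Hilbert--Burch), view $X$ as a scroll over $\P V \cong Q$, and apply \eqref{eqn:Rmatrix} to find that the ramification section is $U_0 L_2 R_{13} + U_1 L_1 R_{23}$, with the third conic $L_3 R_{12}$ lying in the same span. Your explicit Wronskian identity is a slightly cleaner way to see that last containment (the paper argues by permuting the roles of the $L_i$ via row and column operations), but the argument is the same.
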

\begin{proof}
  The ideal of the point $\xi \in \Hilb^3(S)$ is cut out by $2 \times 3$ matrix of linear forms
  \[
    M =
    \begin{pmatrix}
      L_1 & 0 & L_3 \\
      0 & L_2 & L_3
    \end{pmatrix}.
  \]
  Let $U_0, U_1$ be a basis of $U$.
  Under the isomorphism in \autoref{lem:quotgrass}, this $2 \times 3$ matrix corresponds to the point of $\Gr(3, U \otimes \Sym^2 V)$ given by the subspace of $U \otimes \Sym^2 V$ spanned by $U_0 M_{0,i} + U_1 M_{1,i}$ for $i = 1, 2, 3$.
  From \eqref{eqn:Rmatrix}, the ramification divisor of this subspace is given by
  \begin{align*}
    R &= \det
    \begin{pmatrix}
      L_1 & 0 & U_0 L_1' \\
      0 & L_2 & U_1 L_2' \\
      L_3 & L_3 & (U_0+U_1)L_3'
    \end{pmatrix} \\
      &= U_0L_2(L_3'L_1 - L_1L_3') + U_1L_1(L_3'L_2-L_2L_3')\\
      &= U_0L_2R_{13} + U_1 L_1R_{23}.
  \end{align*}
  In this calculation, $L_i'$ denotes the derivative $\frac{d}{dt}$ of $L_i$ considered as an element of $\k[t]$ by pullback under some parametrization $\spec \k[t] \to Q$ and trivialization of $\O(2)|_{\spec \k[t]}$.
  Although the derivative depends on the choices, the forms $L_iL_j' - L_jL_i'$ do not, and they cut out precisely the ramification divisor of the pencil $\langle  L_i, L_j \rangle$.
  Under the isomorphism in \eqref{lem:quotram}, the divisor $R$ corresponds to the 2 dimensional subspace of $\Sym^2 S/q$ spanned by $L_2R_{13}$ and $L_1R_{23}$ (The roles of $L_1, L_2, L_3$ can be changed by linear transformations of $M$, so we get that $L_3R_{12}$ also lies in this span).
  The proof is thus complete.
\end{proof}

Recall that the conic $Q \subset \P S$ gives an isomorphism $\P S \cong \P S^*$, called \emph{polarity} with respect to $Q$.
On the vector spaces, it is the isomorphism induced by the bilinear form associated to $q$.
Geometrically, it is characterized by the rule that the polar of a point $p \in Q$ is the tangent line to $Q$ at $p$.
More generally, given a point $p \in \P S$, the pencil of lines through $p$ contains two lines tangent to $Q$; the polar of $p$ is the line joining the two points of tangency.
We denote the polar of a point $p$ (resp. a line $L$) by $p^\perp$ (resp. $L^\perp$).

Set $M_i = R_{jk}$, and let $N$ be the net spanned by $L_iM_i$ for $i = 1, 2, 3$.
By the definition of $R_{jk}$, we see that $M_i$ is the polar line of the point $L_j \cap L_k$.
In other words, the triangles $(L_1, L_2, L_3)$ and $(M_1, M_2, M_3)$ are polar conjugates---lines in one are polars to the vertices of the other.

\begin{remark}\label{rem:richelot}
  The space $\Hilb^3 \P S / \Aut S$ and its birational involution (called the Richelot involution) induced by
  \begin{equation}\label{eqn:richelot}
    \tau \from \Hilb^3 \P S \dashrightarrow \Hilb^3 \P S
  \end{equation}
  that sends a triangle formed by the lines $(\ell_1, \ell_2, \ell_3)$ to the triangle formed by the points $(\ell_1^\perp, \ell_2^\perp, \ell_3^\perp)$ have well-known moduli interpretations, which we learned from \cite[Example~4.2]{dol.how:15}.
  Intersecting the lines $\ell_i$ with $Q$ for $i = 1, 2, 3$ gives a triple of pairs of points on $Q \cong \P^1$.
  The double cover of $\P^1$ branched along these six points gives a genus 2 curve $C$.
  The grouping of the six points in three pairs $\{p_i, q_i\}$ for $i = 1, 2, 3$ gives a 2-dimensional subspace of the 4-dimensional $\F_2$-vector space $\Pic C [2]$, namely $\{0\} \cup \{p_i - q_i \mid i = 1,2,3\}$.
  This vector space is a maximal isotropic subspace for the Weil pairing on $\Pic C[2]$.
  Conversely, a genus 2 curve $C$ with a maximal isotropic subspace of $\Pic C[2]$ defines six points on $\P^1$ grouped into a triple of pairs.
  Thus, $\Hilb^3 \P S / \Aut S$ is birational to the moduli space of genus 2 curves along with an isotropic subspace of 2-torsion points in its Jacobian.

  Since general principally polarized abelian surfaces are Jacobians of genus 2 curves, $\Hilb^3 \P S / \Aut S$ is also birational to the moduli of $(A, G)$, where $A$ is a principally polarized abelian surface and $G \subset A[2]$ is a maximal isotropic subspace for the Weil pairing.
  In this interpretation, the involution $\tau$ is called the Fricke involution; it sends $(A, G)$ to $(A/G, A[2]/G)$ (see \cite[Page~2]{muk:12}).

  Using the Torelli theorem, the moduli space of $(A, G)$ can be described as the quotient $H_2/\Gamma_0(2)$, where $H_2$ is the Siegel upper half space of degree 2, and $\Gamma_0(2) \subset \operatorname{Sp}(4, \Z)$ is the congruence subgroup consisting of matrices $\begin{pmatrix} A & B \\ C & D \end{pmatrix}$ where $A, B, C, D$ are $2 \times 2$ blocks and $C \equiv 0 \pmod 2$.
  In this interpretation, the involution $\tau$ is induced by the action of $\frac{1}{\sqrt 2} \begin{pmatrix} 0 & I_2 \\ -2 I_2 & 0 \end{pmatrix} \in \operatorname{Sp}(4, \R)$ on $H_2$ (again, see \cite[Page~2]{muk:12}).

  Finally, suppose we pass to ordered triples of pairs, or equivalently to $(\P S)^3 / \Aut S$.
  Then the moduli interpretation changes slightly.
  Now the space is the moduli space of $(A, \psi)$, where $A$ is a principally polarized abelian surface and $\psi \from \F_2^2 \to A[2]$ is an isomorphism onto a maximal isotropic subspace (note that $\psi$ is equivalent to a maximal isotropic $G \subset A[2]$ along with a basis of $G$).
  The moduli space of $(A, \psi)$ is the quotient $H_2 / \Gamma_1(2)$ where $\Gamma_1(2) \subset \operatorname{Sp}(4, \Z)$ is defined by the congruence conditions $A - I_2 \equiv 0 \pmod 2$ in addition to $C \equiv 0 \pmod 2$.
  The Fricke involution continues to act on $H_2 / \Gamma_1(2)$.
  The Satake compactification of $H_2 / \Gamma_1(2)$ is the Igusa quartic threefold in $\P^4$ on which the Fricke involution acts by a linear transformation of $\P^4$.
  The quotient of the Igusa quartic by the Fricke involution is isomorphic to a double cover of $\P^3$ branched along the union of 4 planes  \cite[Theorem~2]{muk:12}.
\end{remark}

Recall that $\xi \in \Hilb^3\P S$ is the point defined by the three vertices of the triangle formed by $(L_1, L_2, L_3)$.
Let $\xi' \in \Hilb^3\P S$ be the point defined by the three vertices of the triangle formed by $(M_1, M_2, M_3)$.
\begin{proposition}\label{prop:quarticscroll}
  In the setup above, $\xi$ and $\xi'$ are the only points of $\Hilb^3\P S$ that map to $N \in \Gr(2, \Sym^2S/q)$.
  In particular, the degree of $\mu \from \Hilb^3\P S \dashrightarrow \Gr(2, \Sym^2S/q)$ is 2.
\end{proposition}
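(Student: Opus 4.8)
The plan is to prove the two halves of the statement separately: first that $\xi'$, like $\xi$, lies in the fibre of $\mu$ over $N$, so that (since a general triangle is not self-polar) the general fibre of $\mu$ already contains the two distinct points $\xi,\xi'$; and then that for general $N$ it contains nothing more, which forces $\deg\mu=2$. The second half is where the real work lies.

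For the first half I would use the fact recorded just above the statement, that $M_i=R_{jk}$ is the polar line $(L_j\cap L_k)^\perp$ of the vertex of the triangle $(L_1,L_2,L_3)$ opposite to $L_i$, together with the classical fact that polar conjugation of triangles with respect to $Q$ is an involution. Concretely, the sides of the triangle $(M_1,M_2,M_3)$ being the polars of the vertices of $(L_1,L_2,L_3)$, its vertex $M_j\cap M_k$ opposite to $M_i$ is the pole of the line $\overline{(L_i\cap L_j)(L_i\cap L_k)}=L_i$, that is $M_j\cap M_k=L_i^\perp$. Feeding the triangle $(M_1,M_2,M_3)$ into the construction of \autoref{lem:mu} therefore produces the lines $(\text{polar of }M_j\cap M_k)=L_i$, so $\mu(\xi')=\langle M_1L_1,M_2L_2,M_3L_3\rangle=\langle L_1M_1,L_2M_2,L_3M_3\rangle=N$ in $\Sym^2S/q$. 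In particular $\mu$ is invariant under the polar-conjugation involution $\tau$ of \eqref{eqn:richelot} and factors birationally through the quotient $\Hilb^3\P S\to\Hilb^3\P S/\langle\tau\rangle$.

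For the second half I would first check that $\mu$ is dominant, so that a general fibre is finite, and then that the induced map $\bar\mu\colon\Hilb^3\P S/\langle\tau\rangle\dashrightarrow\Gr(2,\Sym^2S/q)$ is birational. Dominance is a parameter count: after rescaling the line representatives so that $q=L_1M_1+L_2M_2+L_3M_3$, a point of a general fibre of $\mu$ is an unordered reducible basis $\{L'_iM'_i\}$ of the $3$-dimensional net $\widetilde N=\langle L_1M_1,L_2M_2,L_3M_3\rangle$ satisfying the polar relations $M'_i=(L'_j\cap L'_k)^\perp$; the reducible conics in $\widetilde N$ sweep out the discriminant cubic of $\widetilde N$, so reducible bases up to scaling and relabelling move in a $3$-dimensional family, while each polar relation imposes two further conditions — the system is overdetermined, so the fibre is finite. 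To pin the number down to $2$ I see two routes. The direct route normalises $(L_1,L_2,L_3)$ to the coordinate triangle of $\P S$, so that $q$ becomes a general symmetric form $\sum a_{ij}x_ix_j$, the $M_i$ become the lines $\sum_j a_{ij}x_j=0$, and $\widetilde N=\bigl\langle x_0(\sum_j a_{0j}x_j),\,x_1(\sum_j a_{1j}x_j),\,x_2(\sum_j a_{2j}x_j)\bigr\rangle$; one then solves directly for all triangles giving this net and verifies, in this overdetermined (hence finite) computation, that modulo $\Aut S$ and relabelling the only solutions are the coordinate triangle and its polar conjugate. The conceptual route is to combine the first half with the moduli dictionaries of \autoref{lem:quotgrass}, \autoref{lem:quotram}, \autoref{prop:veronese}, and \autoref{rem:richelot}: using the projective geometry of the net $\widetilde N$ (its Jacobian cubic $R$, its Hermite curve, and the distinguished $2$-torsion point $\eta\in\Pic R[2]$ singled out by the conic $q\in\widetilde N$), one identifies the target modulo $\Aut S$ with the moduli of pairs $(R,\eta)$, identifies $\bar\mu$ with the birational isomorphism onto it, and realises the two points of a general fibre as the two classes in $\Pic E[2]\setminus\pi^*\Pic R[2]$ for the étale double cover $E\to R$ determined by $\eta$, the two being swapped by the Richelot/Fricke involution $\tau$ of \autoref{rem:richelot}.

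I expect the main obstacle to be precisely this last step — upgrading finiteness of the general fibre to the exact count $2$. The parameter count only gives finiteness, and the explicit presence of $\xi,\xi'$ only gives the lower bound $\deg\mu\ge 2$; obtaining equality requires either carrying the overdetermined coordinate elimination all the way through or setting up the Prym-theoretic identification carefully enough that the fibre is genuinely matched with the two-element set $\Pic E[2]\setminus\pi^*\Pic R[2]$.
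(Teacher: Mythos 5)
Your first half is correct and agrees with the paper: the polar--conjugate triangle $(M_1,M_2,M_3)$ has vertices $L_i^\perp$, so running it through \autoref{lem:mu} returns the same net, whence $\mu(\xi')=\mu(\xi)=N$ and $\deg\mu\ge 2$. The genuine gap is in the second half, which you yourself flag as the main obstacle: neither of your two proposed routes is carried out, and the finiteness argument preceding them is only heuristic --- an overdetermined system can perfectly well have a positive-dimensional solution set, so ``each polar relation imposes two further conditions'' does not by itself bound the fibre. As it stands you have a plan plus the lower bound, not a proof of the upper bound.

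What is missing is a mechanism that parametrizes \emph{all} candidate polar-conjugate pairs $(\Delta,\nabla)$ with $\Delta_i\cup\nabla_i\in N$ and then counts them; this is where the paper does its work. It first shows that each $p_i=\Delta_i\cap\nabla_i$ has polar line a Reye line (via the observation that the third singular member of the pencil spanned by $A_1\cup B_1$ and $A_2\cup B_2$ contains $\langle A_1\cap A_2,\, B_1\cap B_2\rangle$), so the $p_i$ are forced to be the three collinear points of $R\cap E^\perp$ off $Q$ and are therefore determined by $N$ alone. It then converts the concurrency of the three Reye lines at each vertex into linear equivalences on the Hermite cubic $E$, namely $\Delta_1+\Delta_2+x_3=H$ and its cyclic permutations, which combined with $x_1+x_2+x_3=H$ give $2\Delta_1=2x_1$; this reduces the fibre to the four candidates $\Delta_1=x_1+\epsilon$, $\epsilon\in\Pic E[2]$, after which $\epsilon=0$ is excluded (the lines $x_i$ are concurrent while the $\Delta_i$ are not) and $\epsilon=\epsilon_0$, the class of the involution of $E\to R$, is excluded because the same argument applied to $\nabla_1=\Delta_1+\epsilon_0$ rules it out. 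Your ``conceptual route'' correctly predicts that the answer is the two classes in $\Pic E[2]\setminus\pi^*\Pic R[2]$, but predicting the parametrization is not establishing it: without the reduction to $2\Delta_1=2x_1$ and the exclusion of two of the four torsion classes (or, alternatively, an actually completed coordinate elimination), the count of $2$ is not proved.
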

We invite the reader to look at \autoref{fig:quarticscroll} for the configuration formed by the conic $Q$, the Jacobian $R$, the dual of the Hermite curve $E^\perp$, and the triangles $(L_1,L_2,L_3)$ and $(M_1,M_2,M_3)$.
\begin{proof}
  By \autoref{lem:mu}, we see immediately that $\mu(\xi') = \mu(\xi) = N$.
  To show that no other triangles map to $N$, consider pairs of triplets $\Delta = (\Delta_1, \Delta_2, \Delta_3)$ and $\nabla = (\nabla_1, \nabla_2, \nabla_3)$ of lines in $\P S$ such that
  \begin{enumerate}
  \item $\Delta$ and $\nabla$ are polar conjugates with respect to $Q$, and
  \item $\Delta_i \cup \nabla_i$ is an element of $N$ for $i = 1, 2, 3$.
  \end{enumerate}
  It suffices to show that the only ones satisfying the two conditions are $(L_1, L_2, L_3)$ and $(M_1, M_2, M_3)$, up to permutation.
  
  To show this, we need some observations.

  First, suppose $A_1 \cup B_1$ and $A_2\cup B_2$ are elements of the net $N$, where $A_i$ and $B_j$ are lines in $\P S$.
  Then, by definition, $A_i$ and $B_j$ are Reye lines of the net $N$.
  Let $p = A_1 \cap A_2$ and $t = B_1 \cap B_2$.
  We claim that the third Reye line through $p$, in addition to $A_1$ and $A_2$, is the line $\langle p, t\rangle$.
  Indeed, in the pencil of conics spanned by $A_1 \cup B_1$ and $A_2 \cup B_2$, the third singular conic is $\langle p, t\rangle \cup \langle p', t'\rangle$, where $p' = A_1 \cap B_2$ and $t' = A_2 \cap B_1$.

  Second, let $R \subset \P S$ be the Jacobian cubic and $E \subset \P S^*$ be the Hermite cubic of $N$.
  Let $E^\perp \subset \P S$ be the image of $E$ under the polarity isomorphism $\P S^* \to \P S$ induced by $Q$.
  Explicitly, the points of $E^\perp$ are the polars of the Reye lines.
  We claim that the six points of intersection of $R$ and $Q$ also lie on $E^\perp$.
  Indeed, to show that $x \in R \cap Q$ also lies on $E^\perp$, it suffices to show that the line $T_xQ$ is a Reye line.
  Since $x \in R$, there exists an element of $N$ of the form $A\cup B$ where $A$ and $B$ are lines intersecting at $x$.
  Note that in the pencil of conics spanned by $A \cup B$ and $Q$, there is a singular conic containing $T_xQ$.
  Therefore, $T_xQ$ is a Reye line.
  
  Third, since $R \cap E^\perp$ contains 6 points on the conic $Q$, the residual 3 points are collinear.
  Let them correspond to $x_1, x_2, x_3 \in E$.
  Denoting by $H$ the hyperplane class of $E \subset \P S^*$, we have the equation in $\Pic E$
  \[ x_1 + x_2 + x_3 = H.\]
  
  Suppose we have two triangles $\Delta$ and $\nabla$ satisfying the two conditions above.
  Consider the point $p_3 = \Delta_3 \cap \nabla_3$.
  By the second condition, it lies on $R$.
  By the polar conjugacy of $\Delta$ and $\nabla$, we have
  \begin{align*}
    p_3^\perp &= \langle \Delta_3^\perp, \nabla_3^\perp \rangle \\
              &= \langle  \nabla_1 \cap \nabla_2, \Delta_1 \cap \Delta_2\rangle.
  \end{align*}
  By the first observation, we see that $p_3^\perp$ is a Reye line.
  Hence $p_3$ lies on $E^\perp$, and hence on $R \cap E^\perp$.
  Similarly, $p_1 = \Delta_1 \cap \nabla_1$ and $p_2 = \Delta_2 \cap \nabla_2$ also lie on $E^\perp$.
  Since $N$ is general, we may assume that the $p_i$ do not lie on $Q$.
  Hence, $p_1, p_2, p_3$ are the three collinear points in $R \cap E^\perp$.
  (The fact that $p_1, p_2, p_3$ are collinear is not surprising---it is because any two polar conjugate triangles are in linear perspective \cite[Theorem~2.1.9]{dol:12}).
  By reordering if necessary, assume that we have $p_i^\perp = x_i$ as elements of $E$.

  Now, observe that the three Reye lines through the vertex $\Delta_1 \cap \Delta_2$ are $\Delta_1$, $\Delta_2$, and $p_3^\perp$, and likewise for the other two vertices.
  The concurrence of the three lines, along with the equality $p_3^\perp = x_1$, yields the system of equations on $\Pic E$
  \begin{align*}
    \Delta_1 + \Delta_2 + x_3 &= H, \\
    \Delta_2 + \Delta_3 + x_1 &= H, \\
    \Delta_3 + \Delta_1 + x_2 &= H. \\
  \end{align*}
  Of course, the same three equations hold if we replace $\Delta$ by $\nabla$.

  Note that the points $x_1, x_2, x_3 \in E$ are determined by $N$.
  Using $x_1 + x_2 + x_3 = H$, a simple calculation gives $2 \Delta_1 = 2x_1$.
  This equation has 4 solutions for $\Delta_1$, namely $x_1 + \epsilon$ for $\epsilon \in \Pic E[2]$.
  Also, $\Delta_1$ determines $\Delta_2$ and $\Delta_3$ by the equations above, which in turn determine the $\nabla_i$ using polarity or the property that $\nabla_i$ and $\Delta_i$ form a fiber of the map $E \to R$.
  Thus, it suffices to show that at most two of the four solutions for $\Delta_1$ can be valid.

  Suppose $\Delta_1 = x_1$.
  Then we get $\Delta_2 = x_2$, and $\Delta_3 = x_3$.
  However, the lines represented by the $x_i$ are concurrent, whereas the lines $\Delta_i$ are not.
  Therefore, we get that $\Delta_1 \neq x_1$.
  The same argument shows that $\nabla_1 \neq x_1$.
  Let the involution of $E$ induced by $E \to R$ be given by the addition of $\epsilon_0 \in \Pic E [2]^*$.
  Since $\Delta_1$ and $\nabla_1$ form a fiber of $E \to R$, we have $\nabla_1 = \Delta_1 + \epsilon_0$.
  So, $\nabla_1 \neq x_1$ translates into $\Delta_1 \neq x_1 + \epsilon_0$.
  In summary, the only two possible solutions for $\Delta_1$ are $x_1 + \epsilon$ for $\epsilon \in \Pic E [2] \setminus \{0, \epsilon_0\}$.
  The proof is now complete.  
\end{proof}
\vfill
\begin{figure}
  \centering
  \includegraphics{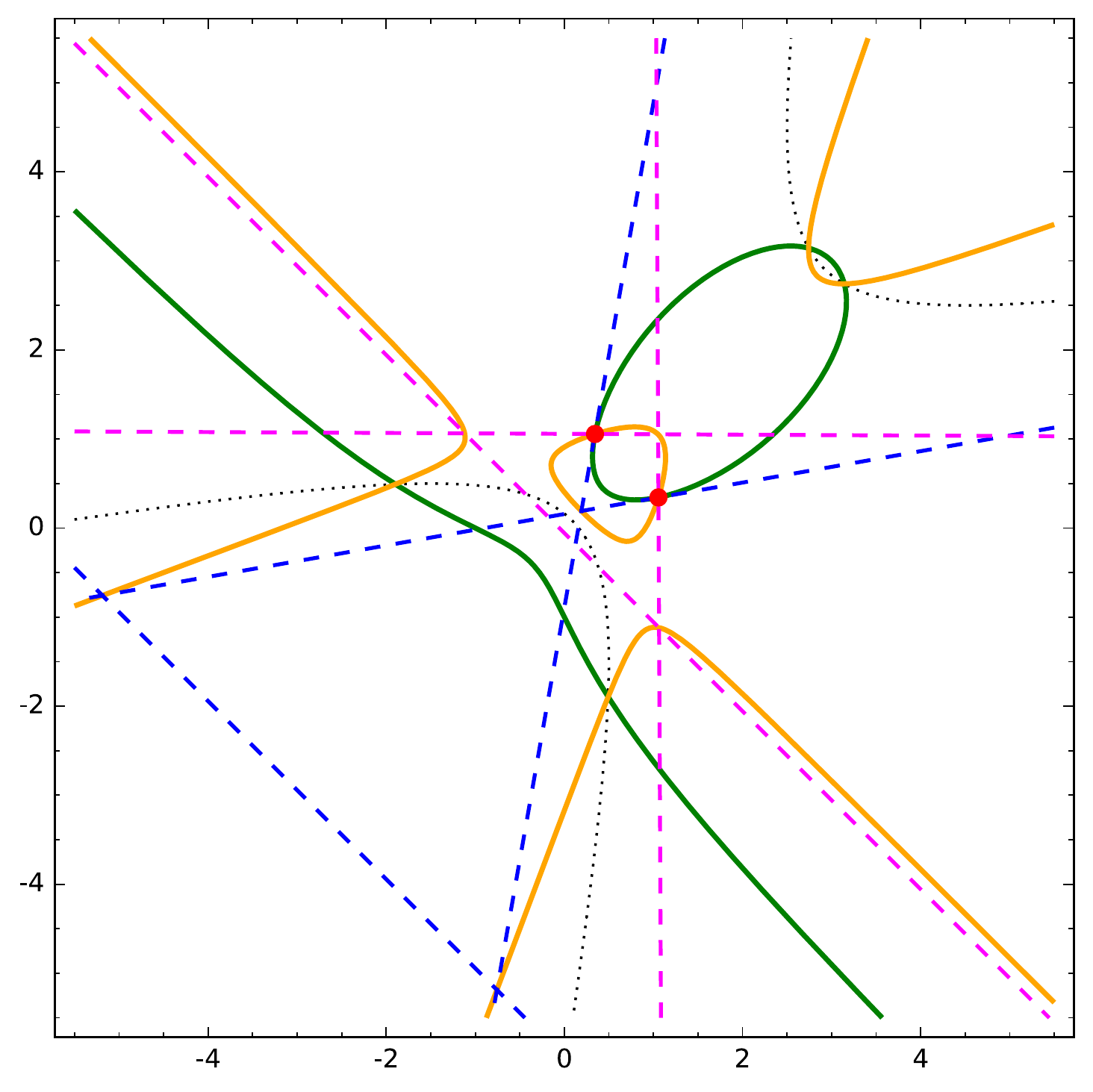}
    \caption{
    The figure shows the unique pair of polar conjugate triangles of Reye lines associated to a net of conics as proved in \autoref{prop:quarticscroll}.
    The net includes the distinguished conic $Q$, seen here as a hyperbola (dotted black).
    The Jacobian cubic $R$ (green) and the dual of the Hermite cubic $E^\perp$ (orange) intersect in 6 points on $Q$ (4 of which are real and visible), and 3 other collinear points, two of which, say $p_1$ and $p_2$, are marked (red), and the third, say $p_3$, is at infinity.
    The Reye lines forming the two polar conjugate triangles (dashed pink and dashed blue) come in conjugate pairs.
    The lines in each pair intersect at $p_1$, $p_2$, and $p_3$.
    (The figure was produced using \texttt{Sage} \cite{the:17}.)
  }
  \label{fig:quarticscroll}
\end{figure}

 \bibliographystyle{siam}

\end{document}